\documentclass[12pt,reqno]{amsart}
\usepackage{amsthm,amsmath,mathtools,amsfonts,mathrsfs,latexsym,ascmac,amssymb,amscd,stmaryrd,comment,bbm}
\usepackage{enumitem}
\usepackage{graphicx,color}
\usepackage[all,cmtip]{xy}
\usepackage{tikz}
\usetikzlibrary{decorations.markings}
\usetikzlibrary{intersections,calc,arrows.meta}
\usepackage[hidelinks]{hyperref}

\newcommand{\R}{{\mathbb{R}}}
\newcommand{\Z}{{\mathbb{Z}}}
\newcommand{\C}{{\mathbb{C}}}

\newcommand{\F}{{\mathbb{F}}}
\newcommand{\Fk}{{\F_k}}
\newcommand{\cE}{{\mathcal{E}}}
\newcommand{\bL}{{\mathbb{L}}}
\newcommand{\fm}{{\mathfrak{m}}}
\newcommand{\cM}{{\mathcal{M}}}
\newcommand{\cO}{{\mathcal{O}}}
\newcommand{\cR}{{\mathcal{R}}}
\newcommand{\cU}{{\mathcal{U}}}
\newcommand{\cW}{{\mathcal{W}}}

\newcommand{\cp}{{\mathbb{C}P^2}}
\newcommand{\del}{{\partial}}
\newcommand{\delb}{{\bar{\partial}}}
\newcommand{\ii}{{\mathbf{i}}}
\newcommand{\rt}{{\sqrt{-1}}}

\newcommand{\grad}{{\mathrm{grad}}}
\newcommand{\Int}{{\mathrm{Int}}}
\newcommand{\Hom}{{\mathrm{Hom}}}

\newcommand{\id}{{\mathrm{id}}}

\newcommand{\Mo}{{\mathit{Mo}}}
\newcommand{\DG}{{\mathit{DG}}}
\newcommand{\Coh}{{\mathit{Coh}}}

\newcommand{\trop}{{\mathrm{trop}}}
\newcommand{\mult}{{\mathrm{mult}}}
\newcommand{\jag}{{\mathrm{jag}}}
\newcommand{\vect}{{\mathrm{vect}}}
\newcommand{\cMj}{{\cM_{\jag}}}

 \newtheorem{theorem}{Theorem}[section]
 \newtheorem{lemma}[theorem]{Lemma}
 \newtheorem{corollary}[theorem]{Corollary}

\theoremstyle{definition}
 \newtheorem{definition}[theorem]{Definition}
 \newtheorem{remark}[theorem]{Remark}

\begin{document}
\title{Multi-valued Morse homotopy for the SYZ mirror of the complex projective plane}

\author{Hayato Nakanishi}
\address{Graduate School of Science, Chiba University, 1-33 Yayoicho, Inage, Chiba, 263-8522 Japan.}
\email{hnakanishi@g.math.s.chiba-u.ac.jp}

\author{Yat-Hin Suen}
\address{Department of Mathematics, National Cheng Kung University, Tainan, 701 Taiwan.}
\email{yhsuen@gs.ncku.edu.tw}
\date{}

\maketitle
\begin{abstract}
In this paper, we propose a definition of the category $\Mo^\mult(P)$ of multi-valued Morse homotopy on $P$ consisting of multi-valued functions associated to Lagrangian multi-sections.
We then show that a full subcategory $\Mo_\cE^\mult(P)$ of $\Mo^\mult(P)$ is $A_\infty$-equivalent to a full subcategory $\DG_\cE^\vect(\cp)$ of the category $\DG^\vect(\cp)$ consisting of holomorphic vector bundles over the complex projective plane $\cp$. As an application, we study the mirror description for global sections of the holomorphic tangent bundle over $\cp$.
\end{abstract} 
\tableofcontents

\markright{Multi-valued Morse homotopy for the SYZ mirror of $\cp$}

\section{Introduction}
Morse homotopy was first introduced by Fukaya \cite{Fuk93} as a Morse theoretical model of the Fukaya category. In \cite{FO97}, Fukaya-Oh proved that the category of Morse homotopy on a closed manifold $B$ consisting of functions is $A_\infty$-equivalent to the Fukaya category of the cotangent bundle $T^*\!B$ consisting of Lagrangian sections.\par
Kontsevich proposed a categorical formulation of mirror symmetry in \cite{Kon95}, which is called the homological mirror symmetry (HMS) conjecture. On the other hand, Strominger-Yau-Zaslow proposed a geometric formulation of mirror symmetry in \cite{SYZ}, which is called the SYZ conjecture. Kontsevich-Soibelman proposed a framework to prove the HMS conjecture based on the SYZ picture and Fukaya-Oh's results for Morse homotopy. Along these lines, Futaki-Kajiura introduced the category $\Mo(P)$ of Morse homotopy on a manifold $P$ with boundary. The category $\Mo(P)$ is one of the generalizations of the weighted Fukaya-Oh category proposed by Kontsevich-Soibelman in \cite{KS01}. Futaki-Kajiura and the first author proved homological mirror symmetry for some toric manifolds by using $Mo(P)$ in \cite{FK21,FK22,Nak24a,Nak24b}. Nishida generalized this framework to toric orbifolds and proved homological mirror symmetry for weighted projective spaces in \cite{Nis25}.\par
In terms of the SYZ conjecture and the HMS conjecture, the mirrors of holomorphic vector bundles are considered to be Lagrangian multi-sections. In a recent work by Oh and the second author \cite{OS24}, they constructed Lagrangian multi-sections that are mirror to toric vector bundles over a toric surface. In particular, the second author discussed a relationship between toric vector bundles, Lagrangian multi-sections, and spectral networks in \cite{Sue24}.\par
In this paper, we propose a definition of the category $\Mo^\mult(P)$ of multi-valued Morse homotopy consisting of multi-valued functions associated to Lagrangian multi-sections, which is a generalization of $\Mo(P)$ proposed by Futaki-Kajiura in \cite{FK21}. In order to treat multi-valued functions, we introduce the notion of a \textit{jagged gradient tree} that is a gradient tree modified by interactions from spectral networks subordinate to Lagrangian multi-sections proposed by the second author in \cite{Sue24}. Such modifications to gradient trajectories are inspired by the notion of a \textit{tropical Morse tree} proposed by Gross-Siebert in \cite{GS12}. We then compare $\Mo^\mult(P)$ with a dg category $\DG^\vect(\cp)$ consisting of holomorphic toric vector bundles over $\cp$.

\subsection{Main results}
Let $P$ be the moment polytope of the complex projective space $\cp$.
We construct the category $\Mo^\mult(P)$ of multi-valued Morse homotopy on $P$ as one of the generalizations of the category $\Mo(P)$ of weighted Morse homotopy proposed in \cite{FK21} to Lagrangian multi-sections. The objects of $\Mo^\mult(P)$ are multi-valued functions associated to Lagrangian multi-sections satisfying some asymptotic condition. The space of morphisms is the $\Z$-graded vector space spanned by the critical points of the multi-valued functions associated to the Lagrangian multi-sections. An $A_\infty$-structure is defined as the modified $A_\infty$-structure of $\Mo(P)$ by interactions of spectral networks subordinated to the Lagrangian multi-sections.\par
On the other hand, the derived category $D^b(\Coh(\cp))$ has a full strongly exceptional collection given by
\begin{equation*}
\cE :=\left(\cO(1), T_{\cp}, \cO(2) \right)
\end{equation*}
where $T_{\cp}$ is the holomorphic tangent bundle. Let $\DG^\vect(\cp)$ be a dg category consisting of holomorphic vector bundles over $\cp$ and $\DG^\vect_\cE(\cp) \subset \DG^\vect(\cp)$ be the full subcategory consisting of $\cE$. Then, we have
\begin{equation*}
D^b(\Coh(\cp)) \simeq Tr(\DG^\vect_\cE(\cp))
\end{equation*}
where $Tr$ is the twisted complex construction by Bondal-Kapranov-Kontsevich \cite{BK,Kon95}.\par
For toric surfaces, Oh and the second author constructed the correspondence between toric vector bundles and mirror Lagrangian multi-sections in \cite{OS24}. Under this correspondence, we denote by the same symbol $\cE$ the collection of the Lagrangian multi-sections mirror to the full strongly exceptional collection $\cE$ of $D^b(\Coh(\cp))$. Also, we denote by $\Mo_\cE^\mult(P)\subset \Mo^\mult(P)$ the full subcategory consisting of $\cE$. Then, the first main result is the following.

\begin{theorem}[Theorem \ref{main theorem prov}]\label{main theorem}
We have an $A_\infty$-equivalence
\begin{equation*}
\Mo^{\mult}_\cE(P)) \simeq \DG^\vect_\cE(\cp).
\end{equation*}
\end{theorem}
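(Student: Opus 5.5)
Both sides are $A_\infty$-categories with three objects and, by construction, directed morphism spaces. The plan is to compare them at the level of cohomology, then use minimality and formality to upgrade this to an $A_\infty$-equivalence.

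\emph{The dg side.} Since $\cE=(\cO(1),T_{\cp},\cO(2))$ is a strongly exceptional collection, the cohomology of every morphism complex in $\DG^\vect_\cE(\cp)$ is concentrated in degree $0$. The spaces are:
\begin{itemize}
\item $\Hom(\cO(1),T_{\cp})\cong H^0(T_{\cp}(-1))\cong\C^3$;
\item $\Hom(T_{\cp},\cO(2))\cong H^0(\Omega^1_{\cp}(2))\cong\C^3$;
\item $\Hom(\cO(1),\cO(2))\cong\C^3$;
\item backward morphisms vanish, and all endomorphism spaces are $\C$.
\end{itemize}
By homological perturbation (Kadeishvili), $\DG^\vect_\cE(\cp)$ is $A_\infty$-quasi-isomorphic to its minimal model. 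Because the cohomology sits in degree $0$, every $m_k$ with $k\ge 3$ has degree $2-k<0$ and so vanishes. Hence the minimal model is the graded linear category $H^0$ with composition only. Concretely, I would compute that composition via the Euler sequence: $H^0(T(-1))\cong V$ and $H^0(\Omega(2))\cong\Lambda^2V^*$ (up to identification), and the composition map $\C^3\otimes\C^3\to\C^3$ is the contraction $V\otimes\Lambda^2V^*\to V^*$. This map is surjective, with a $3$-dimensional kernel.

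\emph{The Morse side.} I would first list the critical points of the differences of the multi-valued functions for each ordered pair of objects. Using the explicit Lagrangian multi-sections of \cite{OS24} mirror to $T_{\cp}$, I expect three critical points of index $0$ for each of the pairs $(\cO(1),T)$, $(T,\cO(2))$ and $(\cO(1),\cO(2))$. For the reverse pairs I expect no critical points, owing to the boundary/asymptotic conditions that make $\Mo^\mult(P)$ directed. For endomorphisms I expect a single minimum; for the multi-section $T_{\cp}$ this requires checking that the interaction between the branches contributes no extra generators.

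Since everything is in degree $0$ and there are no backward morphisms, $m_1=0$, so $\Mo^\mult_\cE(P)$ is minimal. Degree reasons again force $m_k=0$ for $k\ge3$. The only data left to compute is $m_2$, given by counting rigid (jagged) gradient trees.

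\emph{Comparison.} The products $m_2$ out of or into $\cO(1),\cO(2)$ with identities are unital by the standard argument. The nontrivial count is
\[
m_2:\ \Hom(T,\cO(2))\otimes\Hom(\cO(1),T)\to\Hom(\cO(1),\cO(2)).
\]
This counts trivalent jagged gradient trees whose edges lie on branches of the multi-valued function for $T_{\cp}$ and which may bend at walls of the spectral network. I would enumerate these trees for each pair of generators and show that the resulting $3\times3\times3$ structure constants agree, after a suitable choice of bases (possibly with signs or rescalings), with the contraction $V\otimes\Lambda^2V^*\to V^*$. Matching then gives an isomorphism of minimal $A_\infty$-categories, which composed with the minimal model quasi-isomorphism gives the claimed equivalence.

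\emph{Main obstacle.} I expect the main obstacle to be this enumeration of jagged trees. One has to show that each expected tree exists and is rigid, and that its sign and orientation are correct. In particular, I would want exactly one tree, with sign $\pm1$, for each nonzero term of the Koszul-type contraction, and no trees otherwise. To attack this I would exploit the $\Z/3$-symmetry of $P$ to reduce to a single critical-point pair, and track how trees cross the spectral network walls where the branches of the $T_{\cp}$ multi-section swap.
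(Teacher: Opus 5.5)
Your overall route is the paper's. You compute both cohomology categories and note they are concentrated in degree $0$, so their minimal models carry only $\fm_2$. You then match $\fm_2$ with composition of bundle maps after rescaling, and transport back through the minimal-model quasi-isomorphisms (the paper's $q\circ\iota\circ p^{-1}$).

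The step that fails is the claim that $\Mo^\mult_\cE(P)$ is \emph{already} minimal, with $\fm_k=0$ for $k\ge3$. By definition, the self-hom of an $r$-fold multi-section contains a copy $P^{(i)}$ of the polytope for each sheet and a generator for each branch point. So $\Mo^\mult_\cE(P)(\bL_{T_\cp},\bL_{T_\cp})=\C\cdot P^{(1)}\oplus\C\cdot P^{(2)}\oplus\C\cdot b^{(12)}$, with $|P^{(i)}|=0$ and $|b^{(12)}|=1$. The check you flagged only rules out self-intersection generators $x^{(ij)}$, which fail Condition \textbf{(M)} at the vertices; it cannot remove $P^{(1)},P^{(2)},b^{(12)}$. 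Since $\fm_1(P^{(i)})=\pm b^{(12)}$, you have $\fm_1\neq0$. The degree argument also no longer kills higher products: $\fm_3(U,b^{(12)},V)$ lands in degree $0$. The paper points out that such products are forced to be nonzero by the $A_\infty$-relations, and it has to define the structure inductively. For the same reason, ``unital by the standard argument'' does not apply to $\bL_{T_\cp}$. There is no single minimum serving as a strict unit; the cohomological unit is the class $P^{(1)}+P^{(2)}$. You must verify this directly from $\fm_2(U,P^{(r)})=U$ for $U$ on the $r$-th sheet, and similarly for the $V$'s.

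The repair is to treat the Morse side exactly as you treat the dg side: pass to its minimal model. We have $H^0(\Mo^\mult_\cE(P))(\bL_{T_\cp},\bL_{T_\cp})=\C\cdot(P^{(1)}+P^{(2)})$, and all other cohomology is still in degree $0$. So this minimal model is formal too, and the rest of your argument goes through.

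Two smaller corrections. First, the contraction $V\otimes\Lambda^2V^*\to V^*$ goes from a $9$-dimensional space onto a $3$-dimensional one, so its kernel is $6$-dimensional, not $3$. This matches the relations $G_i\circ F_j+G_j\circ F_i=0$ for all $i,j$. Second, the tree enumeration you expect to be the main obstacle is simpler than you anticipate. The $\Z^2$-grading by lifts fixes the only possible output $W_{(i_1+j_1,i_2+j_2)}$ for each pair $(U_{(i_1,i_2)},V_{(j_1,j_2)})$. Tracking branch labels across the spectral network then shows that no jagged tree closes up. So only six ordinary trees contribute. Their weights $e^{-A},e^{-B},e^{-C}$ are absorbed by rescaling the $W$'s; they are not just signs $\pm1$.
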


 For two $A_\infty$-equivalent $A_\infty$-categories, the induced triangulated categories are equivalent as triangulated categories, thus we obtain the following.

\begin{corollary}[Corollary \ref{main cor prov}]
We have an equivalence of triangulated categories
\begin{equation*}
Tr(\Mo^{\mult}_\cE(P)) \simeq D^b(\Coh(\cp)).
\end{equation*}
\end{corollary}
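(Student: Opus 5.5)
The corollary should follow formally from Theorem \ref{main theorem} together with the Bondal--Kapranov--Kontsevich description of $D^b(\Coh(\cp))$ via the full strongly exceptional collection $\cE$. The plan has three steps.

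\emph{Step 1: invariance of $Tr$.} We use that the twisted complex construction is functorial for $A_\infty$-functors and preserves $A_\infty$-equivalences. An $A_\infty$-functor $F\colon \mathcal{A}\to\mathcal{B}$ induces a functor $Tr(F)\colon Tr(\mathcal{A})\to Tr(\mathcal{B})$. It sends a twisted complex $(\bigoplus_i A_i[r_i], \delta)$ to $(\bigoplus_i F(A_i)[r_i], \sum_k F_k(\delta,\dots,\delta))$. The Maurer--Cartan equation for the image holds because of the $A_\infty$-functor relations; strict upper triangularity of $\delta$ makes the sums finite.

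If $F$ is a quasi-equivalence, then $Tr(F)$ is a quasi-equivalence. To see this, filter twisted complexes by the triangular order. The morphism complexes of $Tr$ are iterated cones of morphism complexes of $\mathcal{A}$. A spectral sequence argument, or induction on the length of the twisted complexes, then shows that $Tr(F)$ induces quasi-isomorphisms on all morphism spaces. Essential surjectivity on $H^0$ also holds: every twisted complex in $\mathcal{B}$ is built from objects quasi-isomorphic to images of objects of $\mathcal{A}$, and the twisting data can be transported along $H^0$-isomorphisms. Passing to $H^0$, we obtain an equivalence of triangulated categories $H^0Tr(\Mo^\mult_\cE(P))\simeq H^0Tr(\DG^\vect_\cE(\cp))$. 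This is consistent with the convention that $Tr(\cdot)$ denotes the associated triangulated category.

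\emph{Step 2: identification on the B-side.} We invoke the equivalence $D^b(\Coh(\cp))\simeq Tr(\DG^\vect_\cE(\cp))$ stated in the introduction. It holds because $\cE=(\cO(1),T_{\cp},\cO(2))$ is a full strongly exceptional collection, and the Dolbeault dg model computes $\mathrm{Ext}$ between these bundles. Combining this with Step 1 and Theorem \ref{main theorem} gives $Tr(\Mo^\mult_\cE(P))\simeq D^b(\Coh(\cp))$.

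\emph{Step 3: the main obstacle.} The only subtle point is that Theorem \ref{main theorem} must provide a genuine $A_\infty$-quasi-equivalence, meaning an $A_\infty$-functor whose linear term is a quasi-isomorphism on morphism spaces and bijective on objects. It should not merely be an isomorphism of cohomology categories. I would check this first by recalling how the equivalence is proved. The expected route is to use homological perturbation to pass to minimal models on both sides. Since $\cE$ is strongly exceptional, both minimal models are concentrated in degree $0$ and are directed. Directedness and degree concentration force the higher products $m_k$ with $k\ge3$ to vanish for degree reasons. The equivalence therefore reduces to matching $m_2$, and this gives a strict quasi-equivalence, which is what Step 1 requires. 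No further subtlety arises because both categories are linear over the same ground field.
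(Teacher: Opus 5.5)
Your proposal is correct and is essentially the paper's argument: the paper combines $Tr(\DG^{\vect}_{\cE}(\cp))\simeq D^b(\Coh(\cp))$, which holds because $\cE$ is a full strongly exceptional collection, with the invariance of $Tr$ under $A_\infty$-equivalences applied to Theorem \ref{main theorem prov}. Your Steps 1 and 3 only spell out what the paper takes for granted, or proves in Theorem \ref{main theorem prov} via minimal models and the vanishing of higher products for degree reasons; one small caveat is that degree-zero concentration on the $A$-side comes from the paper's explicit computation of $H(\Mo^{\mult}_{\cE}(P))$, not from strong exceptionality of $\cE$ itself.
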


The second main result is a description of the global sections of the holomorphic tangent bundle $T_\cp$ on the mirror side. Di Rocco-Jabbusch-Smith introduced the notion of a parliament of polytopes for toric bundles in \cite{DJS14}, which provides the description of the global sections of the toric vector bundle as the lattice points in the polytopes. In our formulation of homological mirror symmetry, the space of global sections of $T_\cp$ should be isomorphic to the space $H(\Mo^\mult(P))(L(0),\bL_{T_\cp})$ where $L(0)$ is the zero section and $\bL_{T_\cp}$ is the mirror Lagrangian multi-section of $T_\cp$. By computing the space $\Mo^\mult(P)(L(0),\bL_{T_\cp})$ explicitly, we obtain the correspondence between generators and the following isomorphisms.

\begin{theorem}[Corollary \ref{cohomology of tangent}, Theorem \ref{cohomology of cotangent}]
We have isomorphisms of the cohomologies
\begin{equation*}
\begin{split}
H^k(\Mo^{\mult}(P))(L(0),\bL_{T_{\cp}}) &\cong H^k(\DG^\vect(\cp))(\cO,T_{\cp}).\\
H^k(\Mo^{\mult}(P))(L(0),\bL_{\Omega_{\cp}}) &\cong H^k(\DG^\vect(\cp))(\cO,\Omega_{\cp}).
\end{split}
\end{equation*}
\end{theorem}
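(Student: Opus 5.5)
The plan is to compute both cochain complexes $\Mo^{\mult}(P)(L(0),\bL_{T_{\cp}})$ and $\Mo^{\mult}(P)(L(0),\bL_{\Omega_{\cp}})$ directly from the definitions, and then compare their cohomology with the known answers. On the algebraic side, the Euler sequence and Bott's formula give
\[
H^0(\cp,T_{\cp})\cong\C^8,\qquad H^k(\cp,T_{\cp})=0 \text{ for } k\ge 1,
\]
and
\[
H^1(\cp,\Omega_{\cp})\cong\C,\qquad H^k(\cp,\Omega_{\cp})=0 \text{ for } k\neq 1 .
\]
So the task is to show that the Morse complexes have cohomology of dimension $8$ concentrated in degree $0$, and of dimension $1$ concentrated in degree $1$, respectively.

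\textbf{Step 1: generators.} I would first write down the multi-valued functions $f_{T}$ and $f_{\Omega}$ whose graphs of differentials are the multi-sections $\bL_{T_{\cp}}$ and $\bL_{\Omega_{\cp}}$ constructed in \cite{OS24}. Each is given branchwise on the complement of the branch cut (the spectral network) as a quadratic-type function with the prescribed asymptotics near $\partial P$. The morphism space from $L(0)$ is spanned by the critical points of the branches $f_T - 0$ (resp. $f_\Omega - 0$) lying in the appropriate region, which are the points where a branch meets the zero section. I would list these points and compute their Morse indices, which give the degrees.

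For $T_{\cp}$ I expect the generators to match the lattice points of the Di Rocco--Jabbusch--Smith parliament of polytopes of $T_{\cp}$. That is eight points in total: three on the two sheets in each of the rank-one directions, together with the two "interior" points coming from the sheets meeting over the centre. All of these should have index $0$.

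For $\Omega_{\cp}$ the ascending/descending conditions forced by the opposite asymptotics should leave critical points only of index $1$ (or, if not, index-$0$ and index-$2$ points that must cancel).

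\textbf{Step 2: the differential.} I would then compute $m_1$ by counting jagged gradient trajectories between these critical points. For $T_{\cp}$, if all generators sit in degree $0$ then $m_1=0$ for degree reasons, and the isomorphism with $H^0(T_{\cp})\cong\C^8$ follows immediately. This also gives the announced correspondence between generators and global sections, namely the lattice points of the parliament.

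For $\Omega_{\cp}$, the case I expect is a single index-$1$ critical point. If instead extra critical points appear in degrees $0$ or $2$, I would show that the jagged trajectories connecting them come in a single rigid flow crossing the spectral network. That would make $m_1$ an isomorphism between those extra pieces and leave one-dimensional cohomology in degree $1$.

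\textbf{Main obstacle.} The hard part is Step 2 in the presence of the spectral network. A trajectory may jump sheets when it hits a wall, so counting these jumps, and fixing the signs or weights they contribute, requires careful case analysis near the branch point in the interior of $P$.

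As a cross-check, I would verify the Euler characteristics: $8$ for $T_{\cp}$ and $-1$ for $\Omega_{\cp}$ by Riemann--Roch. Where possible I would also realize the computation through the mirror of the Euler sequence. That amounts to comparing with $\Mo^{\mult}(P)(L(0),L(1)^{\oplus 3})$ and the image of $L(0)$ in the equivalence of Theorem \ref{main theorem}, so that the degree-$0$ computation for $T_{\cp}$ is consistent with the quotient $\C^9/\C$.
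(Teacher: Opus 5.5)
There is a genuine gap, and it sits in Step 1. Your predicted complex for $T_{\cp}$ (eight index-$0$ Morse generators, with $\fm_1=0$ for degree reasons) is not what occurs. The reason is that, in the construction used here, $L(0)$ meets $\bL_{T_{\cp}}$ \emph{at its ramification point}.

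That intersection is not a Morse critical point of any branch: the local potential $\frac{2}{3}r^{3/2}\cos\frac{3}{2}\theta$ is degenerate there. So listing critical points of branches with their Morse indices never produces it. It is also excluded by the definition of $\Mo^{\mult}(P)$. One must therefore either perturb, or, as is done here, formally add a generator $V^b_{(0,0)}$ of degree $1$ (its Maslov index). Consistently, a small perturbation of $L(0)$ replaces it by a single saddle, since the local potential is the real part of a holomorphic function.

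The actual generating set is:
\begin{itemize}
\item six degree-$0$ points at the vertices: $V_{(-1,0)},V_{(1,0)},V_{(0,1)}$ on the first sheet and $V_{(0,-1)},V_{(1,-1)},V_{(-1,1)}$ on the second;
\item three degree-$0$ points $V^0_{(0,0)},V^1_{(0,0)},V^2_{(0,0)}$ on the three edges of $P$ (not two interior points), whose presence and degree are governed by Condition \textbf{(M)} on $\partial P$ rather than by a Morse index;
\item the branch-point generator $V^b_{(0,0)}$.
\end{itemize}
The count $8$ then comes from a \emph{nonzero} differential. Each $V^i_{(0,0)}$ flows along an ordinary gradient line into the branch point, so $\fm_1(V^i_{(0,0)})$ is a nonzero multiple of $V^b_{(0,0)}$. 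Hence $H^1=0$, and $H^0$ is spanned by the six vertex generators together with $e^BV^1_{(0,0)}-e^AV^0_{(0,0)}$ and $e^CV^2_{(0,0)}-e^AV^0_{(0,0)}$. This is the chain-level form of the relation $(0,-1)\otimes\chi^{0}+(1,1)\otimes\chi^{0}+(-1,0)\otimes\chi^{0}=0$ among the nine parliament lattice points. Your Euler-sequence cross-check ($\C^9/\C$) actually points at this structure and contradicts your Step 1 guess.

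The same omission breaks the $\Omega_{\cp}$ case. Its only generator is the branch-point intersection $U^b_{(0,0)}$ in degree $1$, which your Morse-index bookkeeping cannot see, so your plan has no source for the degree-$1$ class. The missing argument uses $\bL_{\Omega_{\cp}}=-\bL_{T_{\cp}}$. The intersection points with $L(0)$ therefore project to the same points as for $T_{\cp}$ (with negated labels), while every gradient field is reversed. Consequently all intersection points on $\partial P$ fail Condition \textbf{(M)} and drop out, leaving $\C\cdot U^b_{(0,0)}$ with $\fm_1=0$.

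Finally, the obstacle you single out, wall-crossing of jagged trajectories, plays no role in either computation. The real issue is the non-Morse intersection at the ramification locus and how to assign it a generator and a degree.
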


 For any toric bundle $E$, we expect that the description of the global sections by using the parliament of polytopes coincides with the generators of the space of the morphism from the zero section $L(0)$ to the mirror Lagrangian multi-section $\bL_E$ in $Mo^\mult(P)$.

\subsection{Organization}
In section \ref{Preliminaries}, we recall the SYZ set-up and the notion of (tropical) Lagrangian multi-sections. In section \ref{complex side}, we explain the complex side of the homological mirror symmetry. Namely, we consider the dg category consisting of holomorphic vector bundles over $\cp$. In section \ref{symplectic side}, we introduce the category of Multi-valued Morse homotopy and prove the main theorem. In section \ref{global sections and its mirror}, we consider the global sections of the holomorphic tangent bundle $T_\cp$ and its mirror. In section \ref{Example of nontrivial jagged gradient tree in CP2}, we give an example of a jagged gradient tree introduced in subsection \ref{mult-morse}. In appendix \ref{local model}, we review the local model of the Lagrangian multi-section and the gradient vector field around the branch point. In appendix \ref{Non-trivial jagged gradient tree in the local model}, we give an example of a jagged gradient tree in the local model.
\\\ \\
\noindent
{\bf Acknowledgements.}
H. Nakanishi would like to thank Hiroshige Kajiura for many valuable comments.
The work of H. Nakanishi was supported by JSPS KAKENHI Grant Number 24KJ0526. The work of Y.-H. Suen was supported by the National Science and Technology Council (Project No. 113-2115-M-006-016-MY2) and partially supported by the Yushan Fellow Program by the Ministry of Education (MOE), Taiwan. (MOE-114-YSFMS-0005-001-P1).

\section{Preliminaries}\label{Preliminaries}
This section contains a brief review of the SYZ construction and Lagrangian multi-sections. Throughout the whole paper, let $N \cong \Z^2$, $N_\R:=N\otimes_\Z\R$, $M:=\Hom_\Z(N, \Z)$, and $M_\R:=M\otimes_\Z\R$.

\subsection{SYZ construction}\label{SYZ construction}
In this subsection, we review the SYZ construction for the complex projective plane $\cp$. For more details see \cite{LYZ00,Cha09,FK21}.\par 
The complex projective plane $\cp$ is defined by
\begin{equation*}
\cp :=\left\{ [z_0:z_1:z_2]\right\} .
\end{equation*}
We take the following open covering $\{U_i\}_i$: 
\begin{equation*}
U_i  =  \left\{[z_0:z_1:z_2] \in \cp\ \middle|\ z_i \neq 0\right\}.
\end{equation*}
The local coordinates in $U_0$ are $(u,v) := (\frac{z_1}{z_0},\frac{z_2}{z_0})$.
The Fubini-Study form $\omega_{FS}$ is expressed in $U_0$ as
\begin{equation*}
\omega_{FS} = \dfrac{1}{2}\delb\del\log(1+u\bar{u}+v\bar{v}).
\end{equation*}
The moment map $\check{\mu}:\cp\to M_\R$ is given by
\begin{equation*}
\check{\mu}([z_0:z_1:z_2]) = \left(\frac{|z_1|^2}{|z_0|^2+|z_1|^2+|z_2|^2}, \frac{|z_2|^2}{|z_0|^2+|z_1|^2+|z_2|^2}\right).
\end{equation*}
The image $\check{\mu}(\cp)$ is called the moment polytope associated to $\omega_{FS}$ (see Figure \ref{poly1}). Namely, the moment polytope $P=\check{\mu}(\cp)$ is given by
\begin{equation*}
P:=\left\{(x^1,x^2)\in M_\R\ \middle|\ 0\leq x^1 \leq 1,\ 0\leq x^2 \leq 1,\ 0\leq x^1 +x^2 \leq 1\right\}.
\end{equation*}
\begin{figure}[h]
\center
\begin{tikzpicture}[scale=0.8]
\draw(0,0)--  (4,0)--  (0,4) --  cycle;
\draw(0,0) node[below left]{$\check{\sigma}_0$};
\draw(4,0) node[below right]{$\check{\sigma}_1$};
\draw(0,4) node[above left]{$\check{\sigma}_2$};
\draw(2,0) node[below]{$\check{\rho}_2$};
\draw(2,2) node[above right]{$\check{\rho}_0$};
\draw(0,2) node[left]{$\check{\rho}_1$};
\end{tikzpicture}\ \ \ \ \ \ 
\begin{tikzpicture}[scale=0.8]
\draw(0,0) --  (-2,0) node[left]{$\rho_1$};
\draw(0,0) --  (0,-2) node[below]{$\rho_2$};
\draw(0,0) --  (1.4,1.4) node[above right]{$\rho_0$};
\draw(-1,-1) node{$\sigma_{0}$};
\draw(1,-1) node{$\sigma_{1}$};
\draw(-1,1) node{$\sigma_{2}$};
\end{tikzpicture}
\caption{The moment polytope $P\subset M_\R$ and the fan over $N_\R$.}
\label{poly1}
\end{figure}
Now, we set $B:=\mathrm{Int}(P)$ and $\cU:=\check{\mu}^{-1}(B)$. Then, $\cU$ is identified with $TN_\R/N$ and equipped with an affine structure by $u=e^{\xi_1+\ii y_1}$ and $v=e^{\xi_2+\ii y_2}$, where $y_1$ and $y_2$ are the fiber coordinates of $\cU$. Hence, the SYZ mirror of the canonical projection $\mathrm{pr}_\xi:\cU\to N_\R$ is given by $Y:=T^*\!N_\R/M$ and $p_{N_\R}:T^*\!N_\R/M \to N_\R$. We denote by $\overline{Y}$ the covering space of $Y$ and the same symbol $p_{N_\R}:\overline{Y}\to N_\R$.
The Fubini-Study form $\omega_{FS}$ is expressed as
\begin{equation*}
\omega_{FS}  = 2\frac{e^{2\xi_1}(1+e^{2\xi_2})d\xi_1\wedge dy_1 -e^{2\xi_1}e^{2\xi_2}d\xi_1\wedge dy_2 - e^{2\xi_1}e^{2\xi_2}d\xi_2\wedge dy_1 +e^{2\xi_2}(1+e^{2\xi_1})d\xi_2\wedge dy_2}{(1+e^{2\xi_1}+e^{2\xi_2})^2}
\end{equation*}
on $\cU$. 
By this expression, the induced metric $\{g^{ij}\}$ on $N_\R$ is given by
\begin{equation*}
\begin{pmatrix}
g^{11} & g^{12}\\
g^{21} & g^{22}
\end{pmatrix}
=
\begin{pmatrix}
\dfrac{2e^{2\xi_1}(1+e^{2\xi_2})}{(1+e^{2\xi_1}+e^{2\xi_2})^2} & \dfrac{- 2e^{2\xi_1}e^{2\xi_2}}{(1+e^{2\xi_1}+e^{2\xi_2})^2} \\
\dfrac{-2e^{2\xi_1}e^{2\xi_2}}{(1+e^{2\xi_1}+e^{2\xi_2})^2} & \dfrac{2e^{2\xi_2}(1+e^{2\xi_1})}{(1+e^{2\xi_1}+e^{2\xi_2})^2}
\end{pmatrix}.
\end{equation*}
Now, we put $\psi:=\frac{1}{2}\log(1+e^{2 \xi_1}+e^{2 \xi_2})$ and then $\frac{\del^2 \psi}{\del \xi_i\del \xi_j}=g^{ij}$. The Legendre transformation $d\psi:N_\R\to B$ is given by
\begin{equation*}
d\psi(\xi_1,\xi_2)
=
\left(\frac{\del\psi}{\del \xi_1} , \frac{\del\psi}{\del \xi_2}\right)
=
\left(\frac{e^{2 \xi_1}}{1+e^{2 \xi_1}+e^{2 \xi_2}}, \frac{e^{2 \xi_2}}{1+e^{2 \xi_1}+e^{2 \xi_2}}\right) =:(x^1,x^2).
\end{equation*}
We set $\pi:=d\psi\circ p_{N_\R}$ and call the coordinates $(x^1,x^2)$ the dual coordinates of $(\xi_1,\xi_2)$ since the Legendre transformation is a diffeomorphism. 
To summarize, we obtain the following diagram:
\begin{equation*}
\xymatrix{
N_\R\times M_\R=T^*\!N_\mathbb{R} \ar@{->}[d]  \ar@{=}[r]  & \overline{Y} \ar@{->}[d]  &  &\\
N_\R\times (M_\R/M)=T^*\!N_\mathbb{R}/M  \ar@{=}[r]  & Y \ar[rd]^{p_{N_\mathbb{R}}} &  &\mathcal{U} \ar[ld]_{\mathrm{pr}_\xi} \ar[rd]^{\check{\mu}|_\cU} & \\
& & N_\mathbb{R}  \ar[rr]^{\cong}_{d\psi} & & B
}
\end{equation*}
The mirror manifold $Y$ is equipped with the standard symplectic structure $\omega=d\xi_1\wedge dy^1+d\xi_2\wedge dy^2$ where $(y^1,y^2)$ is the dual coordinate of $(y_1,y_2)$. By using the dual coordinate $(x^1,x^2)$, we rewrite $\omega$ as
\begin{equation*}
\omega=d\xi_1\wedge dy^1+d\xi_2\wedge dy^2 = \sum_{i,j=1}^2g_{ij}dx^i\wedge dy^j
\end{equation*}
where $\{g_{ij}\}$ is the metric on $B$ which is the inverse matrix of $\{g^{ij}\}$.\par
In this paper, we treat the base space of the SYZ set-up as the moment polytope. In order to discuss the behavior of the Lagrangians at the boundary $\del P$ of the moment polytope $P$, we need to consider the fiber at $\del P$. The symplectic structure $\omega$ can not extend to the fiber at $\del P$ since the metric $\{g_{ij}\}$ diverges at $\del P$. For this reason, we formally add the torus fibers on $\del P$ and denote by $\pi':Y'\to P$ the torus fibration. Also, for the covering space $\overline{Y'}$ of $Y'$, we use the same symbol $\pi':\overline{Y'}\to P$. We then obtain the following diagram:
\begin{equation*}
\xymatrix{
Y' \ar[rd]_{\pi'} \ar@{}[d]|{\bigcup} & & \cp  \ar[ld]^{\check{\mu}} \ar@{}[d]|{\bigcup}\\
Y \ar[rd]_\pi & P \ar@{}[d]|{\bigcup} & \cU \ar[ld]^{\check{\mu}|_\cU}\\
 & B &
}
\end{equation*}
Note that $Y'$ is no longer a symplectic manifold since no symplectic structure is defined on $\pi'^{-1}(\del P)$.

\subsection{Lagrangian multi-sections and spectral networks}
We review the notion of (tropical) Lagrangian multi-sections and spectral networks.
\begin{definition}
Let $\Sigma$ be a complete fan in $N_\R$. An \textit{$r$-fold tropical Lagrangian multi-section over $\Sigma$} is a datum $\bL^{\trop}:=\left( L^{\trop},\Sigma_{L^{\trop}},\mu,p,\varphi^{\trop} \right)$, where 
\begin{enumerate}
\item $(L^{\trop},\Sigma_{L^{\trop}})$ is a connected cone complex weighted by $\mu:L^\trop\to\Z_{>0}$,
\item $p:(L^{\trop},\Sigma_{L^\trop})\to(N_\R,\Sigma)$ is a branched covering map of a cone complex such that $\sum_{x'\in p^{-1}(x)}\mu(x')=r$ for any $x\in N_\R$,
\item $\varphi^\trop$ is a piecewise linear function on $(L^\trop,\Sigma_{L^\trop})$.
\end{enumerate}
\end{definition}
The branched covering map $p:(L^{\trop},\Sigma_{L^\trop})\to(N_\R,\Sigma)$ maps each cone $\sigma$ in $\Sigma_{L^\trop}$ homeomorphically onto a cone $p(\sigma)$ in $\Sigma$. 
We can view $\varphi^\trop$ as a multi-valued piecewise linear function on $N_\R$. In particular, a support function associated to the holomorphic line bundle over the toric variety $X_\Sigma$ is a tropical Lagrangian section (see e.g. \cite{CLS11}).\par
\begin{remark}
Historically, Payne associated branched covers together with piecewise-linear functions to toric vector bundles over a toric variety in \cite{Pay09}.
The second author studied such branched covers and piecewise-linear functions from the viewpoint of mirror symmetry and introduced the notion of tropical Lagrangian multi-sections in \cite{Sue23}.
By smoothing tropical Lagrangian multi-sections, Oh and the second author constructed Lagrangian multi-sections in \cite{OS24}, which are mirror to toric vector bundles.
For more details see \cite{Pay09,Sue23,OS24}.
\end{remark}
\begin{definition}
An immersed Lagrangian $\bL:=(i:L\to T^*\!N_\R)$ is called a \textit{Lagrangian multi-section of degree $r$} if the composition $p_\bL:= p_{N_\R} \circ i:L \to N_\R$ is an $r$-fold branched covering map. We denote by $B_\bL$ the set of branch points. If there is no confusion, we denote by the same symbol $\bL$ the image $i(L)$.
\end{definition}
We assume that all Lagrangian multi-sections in this paper satisfy a certain asymptotic condition called a \textit{$\Lambda_{\bL^\trop}$-admissibility}, which is induced by a tropical Lagrangian multi-section $\bL^\trop$. We will not go into the definition of the admissibility in this paper, but roughly speaking, it means that the behavior of the Lagrangian is controlled at infinity. 
For more details see \cite{OS24, Sue24}.  Furthermore, we assume that Lagrangian multi-sections have finitely many simple branch points. 
Namely, the monodromy around each branch point just exchanges two sheets of $p_\bL:L \to N_\R$. \par
Next, we briefly review spectral networks subordinate to Lagrangian multi-sections. Spectral networks were introduced by Gaiotto-Moore-Neitzke in \cite{GMN13}. Here, we will not review the general definition of the spectral networks subordinate to branched covering maps, but we will only review the spectral networks subordinate to Lagrangian multi-sections constructed in \cite{Sue24}.
Let $\bL$ be a Lagrangian multi-section. Fix a set of disjoint branch cuts $\{c_x\}_{x\in B_\bL}$ for $\bL$. We set
\begin{equation*}
U:=N_\R\backslash\bigsqcup_{x\in B_\bL}c_x
\end{equation*}
and write
\begin{equation*}
p_\bL^{-1}(U) = \bigsqcup_{\alpha=1}^r U^{(\alpha)}
\end{equation*}
so that $p_\bL|_{U^{(\alpha)}}:U^{(\alpha)}\to U$ is a homeomorphism. For each sheet $U^{(\alpha)}$, there exists a function $f^{(\alpha)}_\bL:U\to\R$ so that
\begin{equation*}
\bL^{(\alpha)} := i(U^{(\alpha)}) = \mathrm{graph}(df^{(\alpha)}_\bL).
\end{equation*}
Therefore, the Lagrangian multi-section is expressed as
\begin{equation*}
\bL=\bigcup_{\alpha=1}^r\bL^{(\alpha)}=\bigcup_{\alpha=1}^r\mathrm{graph}(df^{(\alpha)}_\bL).
\end{equation*}
In this sense, we regard the collection of functions $F_\bL=\{f^{(1)}_\bL,\cdots,f^{(r)}_\bL\}$ as the multi-valued function and call it the potential function of $\bL$. 
\begin{definition}
The set of rays subordinate to $\bL$ is the collection
\begin{equation*}
\cR_\bL := \{\ell:[0,a^+)\to N_\R\}
\end{equation*}
of all embedded rays so that the following conditions are satisfied.
\begin{enumerate}
\item The restriction $\ell|_{\ell^{-1}(U)}$ is a maximal solution to an $(\alpha\beta)$-gradient flow equation
\begin{equation*}
\dot{\ell}(t) = \grad(f^{(\alpha)}_\bL-f^{(\beta)}_\bL)(\ell(t))
\end{equation*}
for some distinct $\alpha,\beta=1,2,\cdots,r$.
\item If $\ell|_{\ell^{-1}(U)}$ satisfies some $(\alpha\beta)$-gradient flow equation with $\ell(0)\in B_\bL$, then $\ell\in\cR_\bL$.
\item If $\ell_1,\ell_2\in\cR_\bL$ satisfy the $(\alpha\beta)$- and $(\beta\gamma)$-gradient flow equations with $\ell_1(t_1)=\ell_2(t_2)$ for some $t_1\in \ell_1^{-1}(U),\ t_2\in\ell_2^{-1}(U)$, then there exists $\ell_3\in\cR_\bL$ such that $\ell_3$ satisfies the $(\alpha\gamma)$-gradient flow equation and $\ell_3(0)=\ell_1(t_1)=\ell_2(t_2)$.
\end{enumerate}
\end{definition}
Applying the Legendre transform $d\phi:N_\R\to B$ to each ray $\ell\in\cR_\bL$, we can get the collection of embedded rays in the moment polytope.
\begin{definition}
The \textit{spectral network subordinate to $\bL$} is the collection
\begin{equation*}
\cW_\bL := \{w:=d\phi\circ\ell:[0,a^+)\to B\ |\ \ell\in\cR_\bL\}
\end{equation*}
of all embedded rays. If a ray $\ell\in\cR_\bL$ satisfies the $(\alpha\beta)$-gradient flow equation, then $w=d\phi\circ\ell\in\cW_\bL$ is called an \textit{$(\alpha\beta)$-wall}. $\cW_\bL$ is said to be \textit{non-degenerate} if each wall has at most one branch point. The set
\begin{equation*}
|\cW_{\bL}| := \bigcup_{w\in\cW_{\bL}}w([0,a^+))
\end{equation*}
is called the \textit{support} of $\cW_{\bL}$.
\end{definition}
To ensure the non-degeneracy of the spectral network, we assume the following condition on the Lagrangian multi-sections.
\begin{definition}
We say $\bL$ is \textit{well-behaved} if $\cW_\bL$ is finite and for any wall $w\in\cW_\bL$ the limit
\begin{equation*}
\lim_{t\to a^+}w(t)
\end{equation*}
exists on $\del P$.
\end{definition}
Well-behavedness prohibits gradient lines from connecting two branch points and spinning around.
\begin{theorem}\cite[Theorem 2.14]{Sue24}
If $\bL$ is a well-behaved $\Lambda_{\bL^{\trop}}$-admissible Lagrangian multi-section, then $\cW_\bL$ is a non-degenerate spectral network.
\end{theorem}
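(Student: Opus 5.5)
The claim is that every wall of $\cW_\bL$ contains at most one branch point. I would argue by contradiction, using well-behavedness together with the local structure of the gradient flows. Suppose some wall $w=d\phi\circ\ell$ contains two distinct branch points. Since $w$ is an embedded ray $[0,a^+)\to B$, there are two possibilities. Either $\ell(0)\in B_\bL$ and $\ell(t_0)\in B_\bL$ for some $t_0\in(0,a^+)$, or both branch points occur at interior times. In either case there is a subarc of $\ell$, lying in $U$ away from its endpoints, that solves an $(\alpha\beta)$-gradient flow equation and runs into a branch point $x\in B_\bL$ at a finite parameter $t_0$.

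\textbf{Step 1: the ray cannot continue through $x$.} I would use the local model of a simple branch point. Near $x$, the difference $f^{(\alpha)}_\bL-f^{(\beta)}_\bL$ of the two exchanged sheets behaves like $\mathrm{Re}(z^{3/2})$ up to smooth corrections, and $\Lambda_{\bL^\trop}$-admissibility controls the Lagrangian only at infinity, so it does not interfere here. In this model the gradient lines of the difference function that hit $x$ in finite time form exactly the three emanating (or incoming) directions. A maximal solution of the $(\alpha\beta)$-flow equation on $\ell^{-1}(U)$ that reaches $x$ therefore stops there: the equation is no longer defined at $x$. Hence, if $\ell(t_0)=x$ with $t_0<a^+$, the maximality in condition (1) of the definition of $\cR_\bL$ forces $t_0$ to be an endpoint of $\ell^{-1}(U)$. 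Since $\ell$ is a connected embedded ray, it then terminates at $x$, and its limit as $t\to a^+$ is $x$, an interior point of $B$.

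\textbf{Step 2: contradiction with well-behavedness.} Well-behavedness says that $\lim_{t\to a^+}w(t)$ exists on $\del P$. Since $d\phi$ maps $N_\R$ diffeomorphically onto $B=\Int(P)$, the limit $d\phi(x)$ lies in $B$ and not on $\del P$, a contradiction. This is precisely the sense of the remark that well-behavedness prohibits gradient lines from connecting two branch points.

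\textbf{Step 3: ruling out the remaining configuration.} It remains to exclude a wall containing a branch point $y$ that is not reached by the flow, for instance $y\in c_x$ on a branch cut crossed by $\ell$. Branch cuts are auxiliary: changing $\{c_x\}$ only relabels the sheets, so rays are really defined sheetwise on $L$. I would make this precise by lifting $\ell$ to $L$ and noting that crossing a cut only permutes the labels $\alpha,\beta$. With this, "passing through" a branch point means reaching it along the flow, which is the case already treated. Finiteness of $\cW_\bL$ guarantees that the set of walls is a well-defined finite collection to which this argument applies wall by wall.

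The main obstacle is Step 1, namely justifying the local model rigorously. One has to show that near a simple branch point the difference function has the $z^{3/2}$ form in suitable coordinates, so that no flow line passes through $x$ and continues as a solution of the same $(\alpha\beta)$-equation beyond it. I would take this from the local analysis of Lagrangian multi-sections near simple branch points in \cite{OS24,Sue24}, which is also the local model reviewed in the appendix.
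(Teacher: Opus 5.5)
Your overall strategy matches what the paper points to (it gives no proof, only the citation to \cite{Sue24} and the remark that well-behavedness prohibits gradient lines from connecting two branch points), and Step 2 is the intended contradiction. The gap is in Step 1, which is where the real content lies.

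\textbf{Step 1 does not show that the wall stops at $x$.} From $\ell(t_0)=x\notin U$ you correctly get $t_0\notin\ell^{-1}(U)$. But the inference ``$\ell$ is a connected embedded ray, so it terminates at $x$'' does not follow. $\ell^{-1}(U)$ is disconnected every time $\ell$ crosses a cut, and $x\in c_x$ is excluded from $U$ for exactly the same reason. So condition (1) only constrains $\ell$ on each component of $\ell^{-1}(U)$ separately.

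The local model does not force termination either. For a fixed label, the three directions at $x$ include both incoming and outgoing ones. With the cut along $\theta=\pi$, the gradient of $f^{(1)}-f^{(2)}=\frac{4}{3}r^{3/2}\cos\frac{3}{2}\theta$ has incoming rays along $\theta=\pm 2\pi/3$ and an outgoing ray along $\theta=0$, all inside $U$. Since the field is only $C^{1/2}$ at $x$, solutions reach $x$ and leave it in finite time.

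So you can concatenate an incoming $(12)$-line with the outgoing $(12)$-ray. The result is an embedded ray satisfying the $(12)$-equation on both components of $\ell^{-1}(U)$. If the incoming line is a wall from another branch point (a saddle connection), this ray contains two branch points and can still end on $\del P$. Well-behavedness alone therefore does not exclude it.

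\textbf{What is missing is a convention, not a lemma.} You need to state, as part of the definition, that a wall is a \emph{single} maximal integral curve of the sheet-tracked flow on $N_\R\setminus B_\bL$. Cut crossings become invisible after lifting to $L$, as in your Step 3. Under this convention a wall stops when it meets $B_\bL$, and the outgoing ray from $x$ is a separate wall by condition (2). This is the convention the paper's remark presupposes; it cannot be derived from connectedness of $\ell$.

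\textbf{Rank two versus higher rank.} Your local analysis only treats walls whose labels are the pair exchanged at $x$. This is automatic in rank two, which covers all examples in the paper. For $r\geq 3$, however, an $(\alpha\beta)$-wall can meet a branch point that exchanges $\alpha$ with a third sheet, or exchanges two other sheets. There $\grad(f^{(\alpha)}_\bL-f^{(\beta)}_\bL)$ is continuous and generically nonzero at $x$, so the flow runs straight through $x$. Again, only the convention stops the wall.

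\textbf{With the convention, the argument becomes short.} You no longer need the local model at all. If a wall meets a branch point other than its initial point, it ends there. Then $\lim_{t\to a^+}w(t)=d\phi(x)$ lies in $B=\Int(P)$, contradicting well-behavedness. Step 3 then disappears; as written it is not a separate case anyway, since any branch point on the image of $\ell$ is reached by $\ell$.
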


\section{The category of holomorphic vector bundles.}\label{complex side}
In this section, we discuss the complex side of homological mirror symmetry.\par
We define a dg category $\DG^\vect(X_\Sigma)$ consisting of holomorphic vector bundles on the toric manifold $X_\Sigma$. For a holomorphic vector bundle $V$ on $X_\Sigma$, we take a connection $D$, which defines a holomorphic structure on $V$. We set the objects of $\DG^\vect(X_\Sigma)$ as holomorphic vector bundles $(V,D)$ on $X_\Sigma$. The space $\DG^\vect(X_\Sigma)((V_a,D_a),(V_b,D_b))$ of morphisms is defined as the $\Z$-graded vector space whose degree $r$ part is given by
\begin{equation*}
\DG^\vect(X_\Sigma)^r((V_a,D_a),(V_b,D_b)) := \Gamma(V_a,V_b)\otimes_{C^\infty(X_\Sigma)}\Omega^{0,r}(X_\Sigma),
\end{equation*}
where $\Gamma(V_a,V_b)$ is the space of smooth bundle maps from $V_a$ to $V_b$. 
We define a linear map $d_{ab}:\DG^\vect(X_\Sigma)^r((V_a,D_a),(V_b,D_b))\to \DG^\vect(X_\Sigma)^{r+1}((V_a,D_a),(V_b.D_b))$ as follows. We decompose $D_a$ into its holomorphic part and anti-holomorphic part $D_a= D_a^{(1,0)} + D_a^{(0,1)}$, and set $d_a:=2D_a^{(0,1)}$. Then, for $\psi_{ab}\in \DG^\vect(X_\Sigma)^r((V_a,D_a),(V_b,D_b))$, we set
\begin{equation*}
d_{ab}(\psi_{ab}) := d_b\psi_{ab} -(-1)^r\psi_{ab}d_a.
\end{equation*}
This linear map $d_{ab}:\DG^\vect(X_\Sigma)^r((V_a,D_a),(V_b.D_b))\to \DG^\vect(X_\Sigma)^{r+1}((V_a,D_a),(V_b,D_b))$ satisfies $d_{ab}^2=0$ since $(V_a,D_a)$ and $(V_b,D_b)$ are holomorphic vector bundles. The product structure 
is defined by the composition of bundle maps together with the wedge product for the anti-holomorphic differential forms.
Now, we take a full strongly exceptional collection 
\begin{equation}\label{excep coll}
\cE := \left( \cO(1), T_\cp, \cO(2) \right)
\end{equation}
where $T_\cp$ is the holomorphic tangent bundle.
\begin{remark}
We can get the exceptional collection $\cE$ by applying a right-mutation $R_1$ of the exceptional collection to $(\cO,\cO(1),\cO(2))$.
\end{remark}
We consider the full subcategory $\DG^\vect_\cE(\cp)\subset \DG^\vect(\cp)$ consisting of the exceptional collection $\cE$. The morphisms in $\DG^\vect_\cE(\cp)$ are as follows:
\begin{equation*}
\xymatrix{
\cO(1) \ar@(dl, dr)_{\id_{\cO(1)}} \ar@/^18pt/[rr]^{F_1}  \ar[rr]^{F_2} \ar@/_18pt/[rr]^{F_3} & &  T_\cp \ar@(dl, dr)_{\id_{T_\cp}} \ar@/^18pt/[rr]^{G_1}  \ar[rr]^{G_2} \ar@/_18pt/[rr]^{G_3} & &  \cO(2) \ar@(dl, dr)_{\id_{\cO(2)}}
}
\end{equation*}
Each morphism is expressed locally as 
\begin{align}\label{morphisms}
F_1 &= \begin{pmatrix}1\\0\end{pmatrix}, & F_2 &= \begin{pmatrix}0\\1\end{pmatrix}, & F_3 &= \begin{pmatrix}u\\v\end{pmatrix},\notag\\ G_1 &= \begin{pmatrix}0 & 1\end{pmatrix}, & G_2 &= \begin{pmatrix}-1 & 0\end{pmatrix}, & G_3 &= \begin{pmatrix}-v&u\end{pmatrix},
\end{align}
where $F_i,G_j$ are the bundle maps. By this expression, the compositions are given by
\begin{align}\label{rel in DGvect}
G_1 \circ F_1 &= 0, & G_2 \circ F_1 &= -1, &  G_1 \circ F_2 &= 1,\notag\\
G_2 \circ F_2 &= 0, & G_3 \circ F_2 &= u, & G_2 \circ F_3 &= -u,\\
G_3 \circ F_3 &= 0, & G_3 \circ F_1 &= -v, & G_1 \circ F_3 &= v.\notag
\end{align}

\section{Multi-valued Morse homotopy}\label{symplectic side}
In this section, we discuss the symplectic side of the homological mirror symmetry. In subsection \ref{mult-morse}, we define a category $Mo^{\mult}(P)$ of multi-valued Morse homotopy, which is a generalization of the category $\Mo(P)$ of Morse homotopy introduced in \cite{FK21}. In subsection \ref{correspondence between objects}, we review the Lagrangian sections (resp. multi-sections) corresponding to the holomorphic line (resp. vector) bundles over $\cp$. In subsection \ref{morphisms in mult Mo(P)}, we compute the morphisms in the full subcategory $\Mo^\mult_\cE(P)\subset \Mo^\mult(P)$ consisting of the collection $\cE$ of mirror objects. In subsection \ref{proof of main thm}, we prove the main theorem (Theorem \ref{main theorem}).

\subsection{The category $\Mo^{\mult}(P)$ of multi-valued Morse homotopy}\label{mult-morse}
We define a category $\Mo^{\mult}(P)$ of multi-valued Morse homotopy for a toric surface. This category is a generalization of the weighted Fukaya-Oh category defined by Kontsevich-Soibelman in \cite{KS01} to Lagrangian multi-sections. Note that, we impose the same boundary condition on $Mo^\mult(P)$ as on $Mo(P)$ defined by Futaki-Kajiura in \cite{FK21}, but  we allow only transverse intersections and exclude clean intersections.\par
The objects of $\Mo^{\mult}(P)$ are Lagrangian multi-sections $\bL$ of $p_{N_\R}: Y \to N_\R$ whose lifts to $\overline{Y}$ are well-behaved $\Lambda_{\bL^{\trop}}$-admissible Lagrangian multi-sections.
Given two objects $\bL_1= \cup_{i=1}^{r_1}\bL_1^{(i)},\ \bL_2 = \cup_{\alpha=1}^{r_2}\bL_2^{(\alpha)} \in \Mo^{\mult}(P)$, 
we assume that $\bL_1$ and $\bL_2$ intersect transversely, avoiding each other's ramification locus.
The space $\Mo^\mult(P)(\bL_1,\bL_2)$ is the $\Z$-grading vector space spanned by the points $v\in\pi'(\bL_1 \cap \bL_2)\subset P$ which satisfy the following condition:
\begin{description}
\item[(M)] For the stable manifold $S_v$ of the gradient vector field $\grad(f_{\bL_2}^{(\alpha)} - f_{\bL_1}^{(i)})$ at $v\in\pi'(\bL_1^{(i)}\cap \bL_2^{(\alpha)})$, the point $v$ is an interior point of $S_v\cap P\subset S_v$.
\end{description}
Then we define the degree of $v$ by $|v|:=\dim(S_v)$. The space $\Mo^\mult(P)(\bL_1,\bL_2)$ is written as
\begin{equation*}
\Mo^\mult(P)(\bL_1,\bL_2) := \bigoplus_{\substack{p\in\pi'(\bL_1 \cap \bL_2)\\%
\text{$p$ satisfies }\mathbf{(M)}}}\C\cdot p.
\end{equation*}
In particular, for a Lagrangian section $L$, the space $\Mo^\mult(P)(L,L)$ is spanned by $P$ itself which is of degree zero.
For an $r$-fold Lagrangian multi-section $\bL$, the space $\Mo^\mult(P)(\bL,\bL)$ is defined by
\begin{equation*}
\Mo^\mult(P)(\bL,\bL) := \bigoplus_{i=1}^r \C \cdot  P^{(i)} \oplus \bigoplus_{\substack{i < j \\ b^{(ij)}\in B_\bL}}\C \cdot  b^{(ij)} \oplus \bigoplus_{\substack{i\neq j \\ x^{(ij)}\in \pi'(\bL^{(i)}\cap \bL^{(j)})\backslash B_\bL \\ \text{$x^{(ij)}$ satisfies }\mathbf{(M)}}} \C \cdot  x^{(ij)}
\end{equation*}
where $P^{(i)}$ is a copy of the moment polytope, $b^{(ij)}$ is a branch point interchanging the $i$-th sheet and the $j$-th sheet, and $x^{(ij)}$ is a projection of the self-intersection to the moment polytope. The degree of each $P^{(i)}$ is zero, i.e., $|P^{(i)}|=0$. 
We formally set the degree of $b^{(ij)}$ as one, i.e, $|b^{(ij)}|=1$. The degree of $x^{(ij)}$ is defined by the dimension of the stable manifold $S_{x^{(ij)}}$ of the gradient vector field $\grad(f_\bL^{(j)}-f_\bL^{(i)})$ at $x^{(ij)}$. 
Note that $b^{(ij)}$ and $b^{(ji)}$ are identified, but $x^{(ij)}$ and $x^{(ji)}$ are distinct. These generators $x^{(ij)}$ correspond to the self-intersections in immersed Lagrangian Floer theory \cite{AJ}.\par
\begin{remark}
Condition {\bf (M)} is the boundary condition. If $v \in \pi'(\bL_1^{(i)}\cap \bL_2^{(\alpha)})$ is in $B=\mathrm{Int}(P)$, the point $v$ satisfies Condition {\bf(M)} automatically. Then the point $v$ is the critical point of $f_{\bL_2}^{(\alpha)}-f_{\bL_1}^{(i)}$ and the degree $|v|$ coincides with the Morse index. If $v \in \pi'(\bL_1^{(i)}\cap \bL_2^{(\alpha)})$ is in $\del P$, then it is nontrivial whether Condition {\bf(M)} is satisfied.
Roughly speaking, the space of morphisms is spanned by the critical points of the multi-valued function $F_{\bL_2}-F_{\bL_1}$.
\end{remark}
In general, $\Mo^{\mult}(P)$ must be defined as an $A_\infty$-category, but the higher products $\fm_k\ (k\geq3)$ of the cohomology $H(\Mo^\mult_\cE(P))$ of the full subcategory $\Mo^{\mult}_{\cE}(P)$ considered in this paper vanish. This is why we only explain $\fm_1$ and $\fm_2$ here.\par
For two Lagrangians $\bL_1= \cup_{i=1}^{r_1}\bL_1^{(i)},\bL_2 = \cup_{\alpha=1}^{r_2}\bL_2^{(\alpha)}$ and two points $p,q\subseteq \pi'(\bL_1\cap \bL_2)$, we consider the contribution of spectral networks to gradient lines.
\begin{definition}
Let $p\in\mathrm{Crit}(f^{(\alpha)}_{\bL_2}-f^{(i)}_{\bL_1})$ and $q\in\mathrm{Crit}(f^{(\beta)}_{\bL_2}-f^{(j)}_{\bL_1})$ be the generators of  $\Mo^{\mult}(P)(\bL_1,\bL_2)$ for $\bL_1=\cup_{k=1}^{r_1}\bL_1^{(k)}$ and $\bL_2=\cup_{\gamma=1}^{r_2}\bL_2^{(\gamma)}$.
Let $\ell:\R\to P$ be an embedded line with marked points $\{\ell(t_1),\cdots,\ell(t_n)\}$ for $-\infty<t_1<t_2<\cdots<t_n<\infty$.
The line $(\ell, \{t_i\}_{i=1}^{n})$ is called a \textit{jagged gradient line from $p$ to $q$} if $(\ell, \{t_i\}_{i=1}^{n})$ satisfies the following conditions.
\begin{enumerate}
\item $\lim_{t\to-\infty}\ell(t)=p,\ \lim_{t\to\infty}\ell(t)=q$.
\item $\{\ell(t_1),\cdots,\ell(t_n)\} = \ell(\R)\cap\left(|\cW_{\bL_1}|\cup|\cW_{\bL_2}|\cup\{c_x\}_{x\in B_{\bL_1}\cup B_{\bL_2}}\right)$
\item The restriction $\ell|_{(-\infty,t_1)}$ satisfies the gradient flow equation
\begin{equation*}
\dot{\ell}|_{(-\infty,t_1)}(t) = \grad\left( f^{(\alpha)}_{\bL_2}-f^{(i)}_{\bL_1} \right)(\ell(t)).
\end{equation*}
\item The restriction $\ell|_{(t_n,\infty)}$ satisfies the gradient flow equation
\begin{equation*}
\dot{\ell}|_{(t_n,\infty)}(t) = \grad\left( f^{(\beta)}_{\bL_2}-f^{(j)}_{\bL_1} \right)(\ell(t)).
\end{equation*}
\item If $\ell|_{(t_{m-1},t_m)}$ satisfies the gradient flow equation
\begin{equation*}
\dot{\ell}|_{(t_{m-1},t_m)}(t) = \grad\left( f^{(\gamma)}_{\bL_2}-f^{(k)}_{\bL_1} \right)(\ell(t))
\end{equation*}
and $\ell(t_m)$ lies on the $(kk')$-wall of $\cW_{\bL_1}$ or the $(kk')$-branch cut of $\bL_1$ (resp. the $(\gamma'\gamma)$-wall of $\cW_{\bL_2}$ or the $(\gamma'\gamma)$-branch cut of $\bL_2$), then $\ell|_{(t_m,t_{m+1})}$ satisfies the gradient flow equation
\begin{equation*}
\dot{\ell}|_{(t_{m},t_{m+1})}(t) = \grad\left( f^{(\gamma)}_{\bL_2}-f^{(k')}_{\bL_1} \right)(\ell(t))\ \ \ \left(\text{resp.}\ \ \ =\grad\left( f^{(\gamma')}_{\bL_2}-f^{(k)}_{\bL_1} \right)(\ell(t))\right).
\end{equation*}
\end{enumerate}
An ordinary gradient line is one that has no marked points, i.e., the case where $n=0$.
Figure \ref{fig: jagged gradient line} shows the interactions between the gradient lines and the spectral networks . We call the jagged gradient line $(\ell, \{t_i\}_{i=1}^{n})$ \textit{generic} if $\ell(\R)\cap|\cW_{\bL_1}|\cap|\cW_{\bL_2}|=\emptyset$. Namely, the generic jagged gradient line $(\ell, \{t_i\}_{i=1}^{n})$ interacts with the spectral networks $\cW_{\bL_1}, \cW_{\bL_2}$ one at a time, rather than simultaneously.
\begin{figure}[h]
\center
\begin{tikzpicture}[scale=1.2]
\draw(0,0)--  (3.5,0)node[right]{grad$(f_{\bL_2}^{(2)}-f_{\bL_2}^{(1)})$};
\draw(0,0)--  (-1.5,3)node[above]{grad$(f_{\bL_2}^{(1)}-f_{\bL_2}^{(2)})$};
\draw(0,0)--  (-0.5,-1.5)node[below]{grad$(f_{\bL_2}^{(1)}-f_{\bL_2}^{(2)})$};
\draw[arrows = {-Stealth[scale=1.5]}](0,0)--(2,0);
\draw[arrows = {-Stealth[scale=1.5]}](0,0)--(-0.25,0.5);
\draw[arrows = {-Stealth[scale=1.5]}](0,0)--  (-0.25,-0.75);
\draw[dashed,orange](0,0) -- (-3.5,0) node[left]{$(12)$-branch cut of $\bL_2$};
\draw[red](-2,1)--  (-1.25,2.5);
\draw[red](-1.25,2.5) -- (0,3);
\draw[blue](-2,1)--  (1,2);
\draw[blue](-2,1)--  (-1,0.5);
\draw[red](-2,1)--  (-1.25,-1);
\draw[red](-2,1)--  (-0.5,1);
\draw[red](-0.5,1)--  (1,0);
\draw[red](1,0)--  (1.5,-1);
\draw[arrows = {-Stealth[scale=1.5]},red](-2,1)--  (-1.5,2);
\draw[arrows = {-Stealth[scale=1.5]},blue](-2,1)--  (-1.25,1.25);
\draw[arrows = {-Stealth[scale=1.5]},blue](-2,1)--  (0.4,1.8);
\draw[arrows = {-Stealth[scale=1.5]},red](-1.25,2.5)--  (-0.5,2.8);
\draw[arrows = {-Stealth[scale=1.5]},blue](-2,1)--  (-1.3,0.65);
\draw[arrows = {-Stealth[scale=1.5]},red](-2,1)--  (-1.72,0.25);
\draw[arrows = {-Stealth[scale=1.5]},red](-2,1)--  (-1.4375,-0.5);
\draw[arrows = {-Stealth[scale=1.5]},red](-2,1)--  (-1,1);
\draw[arrows = {-Stealth[scale=1.5]},red](-0.5,1)--  (0.25,0.5);
\draw[arrows = {-Stealth[scale=1.5]},red](1,0)--  (1.25,-0.5);
\draw(-1.75,1.6)node[left]{grad$(f_{\bL_2}^{(2)}-f_{\bL_1})$};
\draw(0,1.4)node[right]{grad$(f_{\bL_2}^{(2)}-f_{\bL_1})$};
\draw(-0.75,2.55)node[right]{grad$(f_{\bL_2}^{(1)}-f_{\bL_1})$};
\draw(-1.4375,-0.5)node[left]{grad$(f_{\bL_2}^{(1)}-f_{\bL_1})$};
\draw(1.25,-0.5)node[right]{grad$(f_{\bL_2}^{(1)}-f_{\bL_1})$};
\fill[black](-2,1)circle(0.06) node[left]{$p$};
\fill[black](0,3)circle(0.06) node[right]{$q_1$};
\fill[black](1,2)circle(0.06) node[right]{$q_2$};
\fill[black](-1,0.5)circle(0.06) node[right]{$q_3$};
\fill[black](-1.25,-1)circle(0.06) node[left]{$q_4$};
\fill[black](1.5,-1)circle(0.06) node[right]{$q_5$};
\draw(3,2)node[right]{$\ p,q_2,q_3\in\mathrm{Crit}(f_{\bL_2}^{(2)}-f_{\bL_1})$};
\draw(3,3)node[right]{$q_1,q_4,q_5\in\mathrm{Crit}(f_{\bL_2}^{(1)}-f_{\bL_1})$};
\end{tikzpicture}
\caption{For a Lagrangian section $\bL_1=\mathrm{graph}(df_{\bL_1})$ and a Lagrangian multi-section $\bL_2=\mathrm{graph}(df_{\bL_2}^{(1)})\cup\mathrm{graph}(df_{\bL_2}^{(2)})$, the black lines are the spectral network $\cW_{\bL_2}$ and the orange line is the branch cut of $\bL_2$. The red lines are jagged gradient lines from $p$ to $q_1,q_4,q_5$. The blue lines from $p$ to $q_2,q_3$ are ordinary gradient lines, which do not interact with the $(12)$-wall of $\cW_{\bL_2}$.}
\label{fig: jagged gradient line}
\end{figure}
\end{definition}
\begin{remark}
The label of the wall is important because the gradient line may not interact with that wall. The gradient line of $f^{(\alpha)}_{\bL_2}-f^{(i)}_{\bL_1}$ does not interact with $(ji)$-walls of $\cW_{\bL_1}$ and $(\alpha\beta)$-walls of $\cW_{\bL_2}$. 
\end{remark}
\begin{remark}
Note that the definition of jagged gradient lines includes the interaction with the branch cut. This interaction is due to the continuity of the multi-valued function and is different from the interaction with the walls of the spectral network. The interaction with the wall corresponds to a pseudo-holomorphic disc containing a ramification locus in Floer theory. For more details see \cite{Sue24,Fuk05,Nho23}.
\end{remark}
We denote by $\cMj(p,q)$ the moduli space of jagged gradient lines from $p$ to $q$. For the case where $|q|=|p|+1$, we assume that $\cMj(p,q)$ is a finite set. Then, we define a linear map $\fm_1:\Mo^{\mult}(P)(\bL_1,\bL_2)\rightarrow \Mo^{\mult}(P)(\bL_1,\bL_2)$ by
\begin{equation*}
\fm_1(p) := \sum_{\substack{q\in \Mo^{\mult}(P)(\bL_1,\bL_2)\\|q|=|p|+1}}  \sum_{\ell\in\cMj(p,q)} \pm e^{-A(\ell)}q
\end{equation*}
where $p,q$ are the generators of $\Mo^{\mult}(P)(\bL_1,\bL_2)$ and $A(\ell)\in[0,\infty]$ is the symplectic area of the piecewise smooth disc in $\pi'^{-1}(\ell(\R))$.\par
Secondly, we take three Lagrangians $\bL_1,\bL_2,\bL_3$ and three points $v_{12}\subseteq \pi'(\bL_1\cap \bL_2)$, $v_{23}\subseteq\pi'(\bL_2\cap \bL_3)$, $v_{13}\subseteq\pi'(\bL_1\cap \bL_3)$. Let $\cMj(v_{12},v_{23};v_{13})$ be the moduli space of the trivalent gradient trees starting at $v_{12}$, $v_{23}$ and ending at $v_{13}$ such that one or more of its edges are jagged gradient lines.
\begin{figure}[h]
\center
\begin{tikzpicture}[scale=1.2]
\draw(-2,0)--  (-1,1);
\draw(-2,0)--  (-3,1);
\draw(-2,0)--  (-2,-1);
\draw(0,0) node{$\longmapsto$};
\draw(0,0.3) node{$\gamma$};
\draw(4,0)--  (5,1) node[above]{$v_{23}$};
\draw(4,0)--  (3,1) node[above]{$v_{12}$};
\draw(4,0)--  (4,-1) node[below]{$v_{13}$};
\draw[arrows = {-Stealth[scale=1.5]}](5,1)--  (4.5,0.5);
\draw(4.7,0.5) node[right]{$\grad(f_3-f_2)$};
\draw[arrows = {-Stealth[scale=1.5]}](3,1)--  (3.5,0.5);
\draw(3.3,0.5)node[left]{$\grad(f_2-f_1)$};
\draw[arrows = {-Stealth[scale=1.5]}](4,0)--  (4,-0.5);
\draw(4.2,-0.5)node[right]{$\grad(f_3-f_1)$};
\end{tikzpicture}
\caption{An ordinary trivalent gradient tree. The elements of $\cM_{\mathrm{jag}}(v_{12},v_{23};v_{13})$ are obtained by replacing the edges with jagged gradient lines.}
\label{fig: tree}
\end{figure}
For the case where $|v_{13}|=|v_{12}|+|v_{23}|$, we assume that $\cMj(v_{12},v_{23};v_{13})$ is a finite set.
Then, we define the multilinear product $\fm_2:\Mo^{\mult}(P)(\bL_1,\bL_2)\otimes \Mo^{\mult}(P)(\bL_2,\bL_3) \to \Mo^{\mult}(P)(\bL_1,\bL_3)$ by
\begin{equation*}
\fm_2(v_{12},v_{23}) := \sum_{\substack{v_{13}\in \Mo^{\mult}(P)(\bL_1,\bL_3)\\|v_{13}|=|v_{12}|+|v_{23}|}}\sum_{\gamma\in \cMj(v_{12},v_{23};v_{13})}\pm e^{-A(\gamma)}v_{13},
\end{equation*}
where $v_{ij}$ are the generators of $\Mo^{\mult}(P)(\bL_i,\bL_j)$ and $A(\gamma)\in[0,\infty]$ is the symplectic area of the piecewise smooth disc.\par
Similarly, the higher products $\fm_n\ (n\geq3)$ are defined by counting the ordinary gradient trees and counting the gradient trees that include jagged gradient lines as edges.

\subsection{Lagrangian sections and Lagrangian multi-sections}\label{correspondence between objects}
Firstly, we consider the Lagrangian sections $L(k)$ corresponding to the holomorphic line bundle $\cO(k)$. These Lagrangian sections are defined by
\begin{equation*}
\begin{pmatrix}
y^1\\y^2
\end{pmatrix}
=
k\begin{pmatrix}
\dfrac{e^{2\xi_1}}{1+e^{2\xi_1}+e^{2\xi_2}}\\\dfrac{e^{2\xi_2}}{1+e^{2\xi_1}+e^{2\xi_2}}
\end{pmatrix}
=
k\begin{pmatrix}
x^1\\x^2
\end{pmatrix},
\end{equation*}
since the connection form of $\cO(k)$ is given by
\begin{equation*}
d - k\dfrac{\bar{u}du+\bar{v}dv}{1+u\bar{u}+v\bar{v}} = d - k\dfrac{e^{2\xi_1}(d\xi_1+\rt dy_1)+e^{2\xi_2}(d\xi_2+\rt dy_2)}{1+e^{2\xi_1}+e^{2\xi_2}}.
\end{equation*}
The potential function $f_{L(k)}$ of $L(k)$ is given by
\begin{equation*}
f_{L(k)} = \frac{k}{2}\log(1+e^{2\xi_1}+e^{2\xi_2}) = -\frac{k}{2}\log(1-x^1-x^2).
\end{equation*}
The potential function means that the Lagrangian section $L(k)$ is the graph of the $1$-form $df_{L(k)}$ in $T^*\!N_\R$. The corresponding gradient vector field is
\begin{equation*}
\grad(f_{L(k)}) = \sum_{i,j=1}^2\frac{\del f_{L(k)}}{\del x^i}g^{ij}\frac{\del}{\del x^j} = \frac{\del f_{L(k)}}{\del \xi_1}\frac{\del}{\del x^1} + \frac{\del f_{L(k)}}{\del \xi_2}\frac{\del}{\del x^2} = k\left( x^1\frac{\del}{\del x^1} + x^2\frac{\del}{\del x^2} \right).
\end{equation*}
\par
Secondly, we consider the Lagrangian multi-section $\bL_{T_\cp}$ corresponding to the holomorphic tangent bundle $T_\cp$. This Lagrangian multi-section constructed in \cite{OS24} is realized by the embedded Lagrangian in $Y$.
We briefly review the construction of Lagrangian multi-sections from tropical Lagrangian multi-sections.
Firstly, for a tropical Lagrangian multi-section $(\bL^{\trop},\varphi^{\trop})$, we construct an embedded Lagrangian near the infinity by smoothing the piecewise linear function $\varphi^\trop$.
Secondly, we construct a local model of the Lagrangian multi-section around the branch points. We review this part in appendix \ref{local model}.
Finally, we obtain an embedded Lagrangian multi-section by gluing the outer part and the local model. The resulting Lagrangian satisfies the asymptotic condition by the construction.
In this paper, we apply the construction to the tropical Lagrangian multi-section $(\bL^{\trop}_{T_\cp},\varphi^{\trop}_{T_\cp})$ obtained from the holomorphic tangent bundle $T_\cp$.
Figure \ref{fig:Tcp2} shows the tropical Lagrangian multi-section $(\bL^{\trop}_{T_\cp},\varphi^{\trop}_{T_\cp})$ associated to $T_\cp$.
\begin{figure}[h]
		\centering
\begin{tikzpicture}[scale=0.8]
\draw(3,0) node[right][black]{$\ \ \ \xrightarrow{\ \ \ p\ \ \ }$}  --  (0,0);
\draw(0,0) --  (-1.5,-2.6) ;
\draw(-1.5,2.6)  --  (0,0);
\draw(0,0) --  (-3,0);
\draw(1.5,2.6)  --  (0,0);
\draw(0,0) --  (1.5,-2.6);
\draw(0.1,0.9) node{$\sigma_0^{(1)}$};
\draw(-0.8,0.4) node{$\sigma_1^{(1)}$};
\draw(-0.8,-0.4) node{$\sigma_2^{(1)}$};
\draw(0,-0.9) node{$\sigma_0^{(2)}$};
\draw(0.8,-0.4) node{$\sigma_1^{(2)}$};
\draw(0.8,0.4) node{$\sigma_2^{(2)}$};
\draw(0,2) node{$-\xi_1$};
\draw(-2,1) node{$\xi_1$};
\draw(-2,-1) node{$\xi_2$};
\draw(0,-2) node{$-\xi_2$};
\draw(2,-1) node{$\xi_1-\xi_2$};
\draw(2,1) node{$-\xi_1+\xi_2$};
\end{tikzpicture}
\begin{tikzpicture}[scale=0.9]
\draw(0,0) --  (-2,0) node[left]{$\rho_1$};
\draw(0,0) --  (0,-2) node[below]{$\rho_2$};
\draw(0,0) --  (1.4,1.4) node[above right]{$\rho_0$};
\draw(-1,-1) node{$\sigma_{0}$};
\draw(1,-1) node{$\sigma_{1}$};
\draw(-1,1) node{$\sigma_{2}$};
\end{tikzpicture}
	    \caption{The tropical Lagrangian multi-section $(\bL_{T_{\cp}}^{\trop},\varphi^{\trop}_{T_\cp})$ associated to the holomorphic tangent bundle $T_\cp$.}
		\label{fig:Tcp2}
\end{figure}
Let $\bL_{T_\cp}$ be the Lagrangian multi-section constructed from $(\bL^{\trop}_{T_\cp},\varphi^{\trop}_{T_\cp})$ and $F_{\bL_{T_\cp}}=\{f^{(1)}_{\bL_{T_\cp}},f^{(2)}_{\bL_{T_\cp}}\}$ be the potential function of $\bL_{T_\cp}$. We denote by $\bL_{T_\cp}^{(1)}$ (resp. $\bL_{T_\cp}^{(2)}$) the first (resp. second) sheet of $\bL_{T_\cp}$. 
For a maximal cone $\sigma$ of $\Sigma$, the fiber coordinates of $\bL_{T_\cp}$ at the vertex $\check{\sigma}$ of the moment polytope are given by $d(\varphi^{\trop}_{T_\cp}|_{\sigma^{(1)}})$ and $d(\varphi^{\trop}_{T_\cp}|_{\sigma^{(2)}})$ since the potential function $F_{\bL_{T_\cp}}$ is the smoothing of $\varphi^{\trop}_{T_\cp}$.
The fiber coordinate $(y^1,y^2)$ of $\bL_{T_\cp}$ at each vertex of the moment polytope is depicted in Figure \ref{fig:fib coord of multi-sec}.
\begin{figure}[h]
		\centering
\begin{tikzpicture}[scale=1.3]
\draw(-4,0)  --  (-1,0) -- (-4,3) --cycle;
\draw(4,0)  --  (1,0) -- (1,3)  --cycle;
\draw[dashed,orange] (-4,1.75) to[out=350,in=120] (-3,1);
\draw[dashed,orange] (1,1.75) to[out=340,in=120] (2,1);
\fill[black](-4,0)circle(0.06);
\fill[black](-1,0)circle(0.06);
\fill[black](-4,3)circle(0.06);
\fill[black](1,0)circle(0.06);
\fill[black](4,0)circle(0.06);
\fill[black](1,3)circle(0.06);
\draw(-4,0) node[above right]{$\check{\sigma}_0$};
\draw(-1.3,0) node[above left]{$\check{\sigma}_1$};
\draw(-4,2.7) node[below right]{$\check{\sigma}_2$};
\draw(1,0) node[above right]{$\check{\sigma}_0$};
\draw(3.7,0) node[above left]{$\check{\sigma}_1$};
\draw(1,2.7) node[below right]{$\check{\sigma}_2$};
\draw(-4,0) node[left]{$\begin{pmatrix}-1\\0\end{pmatrix}$};
\draw(-1,0) node[right]{$\begin{pmatrix}1\\0\end{pmatrix}$};
\draw(-4,3) node[left]{$\begin{pmatrix}0\\1\end{pmatrix}$};
\draw(1,0) node[left]{$\begin{pmatrix}0\\-1\end{pmatrix}$};
\draw(4,0) node[right]{$\begin{pmatrix}1\\-1\end{pmatrix}$};
\draw(1,3) node[left]{$\begin{pmatrix}-1\\1\end{pmatrix}$};
\end{tikzpicture}
	    \caption{The fiber coordinate of $\bL_{T_\cp}$ at each vertex of the moment polytope. The orange dashed lines are the branch cuts.  The left side is the first sheet $\bL_{T_\cp}^{(1)}$ and the right side is the second sheet $\bL_{T_\cp}^{(2)}$ of $\bL_{T_\cp}=\bL_{T_\cp}^{(1)}\cup\bL_{T_\cp}^{(2)}$.}
		\label{fig:fib coord of multi-sec}
\end{figure}
We can take a local model around the branch point of $\bL_{T_\cp}$ so that $L(1)\cap\bL_{T_\cp}=\emptyset$ and $\bL_{T_\cp}\cap L(2)=\emptyset$ in the total space $Y=\pi^{-1}(B)$.
Furthermore, in a neighborhood of each edge of the moment polytope, $\bL_{T_\cp}$ behaves like $L(1)$ or $L(2)$ asymptotically since $\bL_{T_\cp}$ is defined by the smoothing of $\varphi^{\trop}_{T_\cp}$. To avoid clean intersections, we take the realization of $\bL_{T_\cp}$ in the neighborhood of $\del P$ as shown in Figure \ref{fig:pLag} symmetrically. Such perturbation can be realized as a Hamiltonian diffeomorphism and does not change the mirror vector bundle since they do not change the boundary conditions at each vertex (see \cite{Cha09, OS24} for details).
\begin{figure}[h]
\center
\begin{tikzpicture}
\draw[dashed](-2,0) node[left][black]{0} --  (2,0)node[right][black]{$0$};
\draw[dashed](-2,4) node[left][black]{1}  --  (2,4) node[right][black]{$1$};
\draw[thick] (-2,0) -- (2,4);
\draw[thick] (-2,0) -- (0,4);
\draw[thick] (0,0) -- (2,4);
\draw[thick][blue] (-2,0) to[out=35,in=235] (0,2) to[out=55,in=215] (2,4);
\draw[thick][red] (-2,0) to[out=70,in=230] (0,4);
\draw[thick][red] (0,0) to[out=50,in=250] (2,4);
\end{tikzpicture}
\caption{The realization of $\bL_{T_\cp}$ on the edge $\check{\rho}_2$. The black lines are the Lagrangian sections $L(1)$ and $L(2)$. The red line is the first sheet $\bL_{T_\cp}^{(1)}$ and the blue line is the second sheet $\bL_{T_\cp}^{(2)}$ of $\bL_{T_\cp}=\bL_{T_\cp}^{(1)}\cup\bL_{T_\cp}^{(2)}$.}
\label{fig:pLag}
\end{figure}
\par 
We denote by $\cE$ the collection $(L(1),\bL_{T_\cp},L(2))$ of the Lagrangians which is the same notation as the exceptional collection given in subsection \ref{complex side}.

\subsection{The space of morphisms in $Mo^{\mult}_\cE(P)$}\label{morphisms in mult Mo(P)}
We compute the space of morphisms in the full subcategory $\Mo^{\mult}_\cE(P)$ consisting of $\cE$ under the above assumption. For the Lagrangians $\bL_1,\bL_2 \in \Mo^\mult(P)$, let $(y^1_{\bL_1},y^2_{\bL_2})(x),(y^1_{\bL_1},y^2_{\bL_2})(x)$ be the fiber coordinates of $\bL_1,\bL_2$ respectively. Then, the intersections $\bL_1\cap\bL_2$ are expressed as
\begin{equation*}
\begin{pmatrix}
y^1_{\bL_1}\\
y^2_{\bL_1}
\end{pmatrix}(x) + \begin{pmatrix}
i_1\\
i_2
\end{pmatrix} = \begin{pmatrix}
y^1_{\bL_2}\\
y^2_{\bL_2}
\end{pmatrix}(x)
\end{equation*}
in the covering space $\pi:\overline{Y'}\to P$ where $(i_1,i_2)\in\Z^2$. If there exists a nonempty intersection for fixed $(i_1,i_2)\in\Z^2$, then we set $v_{(i_1,i_2)}$ as the projection of that intersection to the moment polytope.
We can view the Lagrangian $\bL$ as the graph of the $1$-form $dF_{\bL}$ for some potential function $F_{\bL}$. Thus the point $v_{(i_1,i_2)}$ is the critical point of the difference of the potential functions of the two Lagrangians.
The corresponding gradient vector field of $v_{(i_1,i_2)}$ is given by
\begin{equation*}
\grad\left(F_{\bL_2}-(F_{\bL_1}+i_1\xi_1+i_2\xi_2)\right) = \left(y^1_{\bL_2}-y^1_{\bL_1}-i_1\right)\frac{\del}{\del x^1} + \left(y^2_{\bL_2}-y^2_{\bL_1}-i_2\right)\frac{\del}{\del x^2},
\end{equation*}
where $F_{\bL_1}$ and $F_{\bL_2}$ are the local potential functions of $\bL_1$ and $\bL_2$. Note that since $F_{\bL_1}$ and $F_{\bL_2}$ are multi-valued functions, we need to consider each branch separately. Namely, for $F_{\bL_1}=\{f_{\bL_1}^{(1)},\cdots,f_{\bL_1}^{(r_1)}\}$ and $F_{\bL_2}=\{f_{\bL_2}^{(1)},\cdots,f_{\bL_2}^{(r_2)}\}$, the gradient vector field of $v_{(i_1,i_2)}\in\mathrm{Crit}(f_{\bL_2}^{(\alpha)}-f_{\bL_1}^{(i)})$ is given by
\begin{equation*}
\grad\left(f_{\bL_2}^{(\alpha)}-(f_{\bL_1}^{(i)}+i_1\xi_1+i_2\xi_2)\right) = \left(y^1_{\bL_2^{(\alpha)}}-y^1_{\bL_1^{(i)}}-i_1\right)\frac{\del}{\del x^1} + \left(y^2_{\bL_2^{(\alpha)}}-y^2_{\bL_1^{(i)}}-i_2\right)\frac{\del}{\del x^2},
\end{equation*}
where $\bL_1^{(i)}$ and $\bL_2^{(\alpha)}$ are the sheets of $\bL_1= \cup_{i=1}^{r_1}\bL_1^{(i)} = \cup_{i=1}^{r_1}\mathrm{graph}(df^{(i)}_{\bL_1})$ and $\bL_2 = \cup_{\alpha=1}^{r_2}\bL_2^{(\alpha)} = \cup_{i=1}^{r_2}\mathrm{graph}(df^{(\alpha)}_{\bL_2})$.
Therefore, each generator of $\Mo^\mult(P)(\bL_1,\bL_2)$ has a $\Z^2$-grading related to the lift of the Lagrangians. The multilinear map $\fm_n$ must preserve the $\Z^2$-grading. For instance, the product $\fm_2$ is given by
\begin{equation*}
\fm_2(u_{(i_1,i_2)},v_{(j_1,j_2)}) = \sum_\gamma\pm e^{-A(\gamma)} w_{(k_1,k_2)}
\end{equation*}
where $u_{(i_1,i_2)}\in\Mo^\mult(P)(\bL_1,\bL_2)$, $v_{(j_1,j_2)}\in\Mo^\mult(P)(\bL_2,\bL_3)$, and $w_{(k_1,k_2)}\in\Mo^\mult(P)(\bL_1,\bL_3)$. Preserving the $\Z^2$-grading is $(k_1,k_2)=(i_1+j_1,i_2+j_2)$, which means
\begin{equation*}
F_{\bL_3}-(F_{\bL_1}+k_1\xi_1+k_2\xi_2) = (F_{\bL_3}-(F_{\bL_2}+j_1\xi_1+j_2\xi_2))+(F_{\bL_2}-(F_{\bL_1}+i_1\xi_1+i_2\xi_2)).
\end{equation*}
This condition is related to the existence of pseudo-holomorphic disks bounded by Lagrangians. For more details see \cite{Kaj14,FK21}.\par
Based on these insights, we compute each space of morphisms in $Mo_\cE^\mult(P)$.
\begin{itemize}
\item $\Mo^\mult_\cE(P)(L(1),\bL_{T_\cp})$: \par
The projections of the intersection $\pi'(L(1)\cap\bL_{T_\cp})$ and gradient lines are depicted in Figure \ref{fig:1toT}.
\begin{figure}[h]
		\centering
\begin{tikzpicture}[scale=1.3]
\draw[dashed](-4,0)  --  (-1,0) -- (-4,3) --cycle;
\draw[dashed](4,0)  --  (1,0) -- (1,3)  --cycle;
\draw[dashed,orange] (-4,1.75) to[out=350,in=120] (-3,1);
\draw[dashed,orange] (1,1.75) to[out=340,in=120] (2,1);
\fill[black](-4,0)circle(0.06) node[left]{$u^0_{(-1,0)}$};
\fill[black](-1,0)circle(0.06) node[right]{$u^1_{(0,0)}$};
\fill[black](-4,3)circle(0.06) node[left]{$u^2_{(0,0)}$};
\fill[black](1,0)circle(0.06) node[left]{$u^0_{(0,-1)}$};
\fill[black](4,0)circle(0.06) node[right]{$u^1_{(0,-1)}$};
\fill[black](1,3)circle(0.06) node[left]{$u^2_{(-1,0)}$};
\fill[black](-4,1.5)circle(0.06) node[left]{$U_{(-1,0)}$};
\fill[black](-2.5,1.5)circle(0.06) node[above right]{$U_{(0,0)}$};
\fill[black](2.5,0)circle(0.06) node[above]{$U_{(0,-1)}$};
\draw[thick](-4,0) -- (-4,1.5);
\draw[arrows = {-Stealth[scale=1.5]}](-4,1.5) -- (-4,0.75);
\draw[thick](-4,1.75) -- (-4,1.5);
\draw[arrows = {-Stealth[scale=1.5]}](-4,1.5) -- (-4,1.75);
\draw[thick](1,3) -- (1,1.75);
\draw[arrows = {-Stealth[scale=1.5]}](1,2) -- (1,2.5);
\draw[thick](-4,3) -- (-2.5,1.5);
\draw[arrows = {-Stealth[scale=1.5]}](-2.5,1.5) -- (-3.25,2.25);
\draw[thick](-1,0) -- (-2.5,1.5);
\draw[arrows = {-Stealth[scale=1.5]}](-2.5,1.5) -- (-1.75,0.75);
\draw[thick](1,0) -- (2.5,0);
\draw[arrows = {-Stealth[scale=1.5]}](2.5,0) -- (1.75,0);
\draw[thick](4,0) -- (2.5,0);
\draw[arrows = {-Stealth[scale=1.5]}](2.5,0) -- (3.25,0);
\draw (-2.5,-0.6) node{$\pi'(L(1)\cap\bL_{T_\cp}^{(1)})$};
\draw (2.5,-0.6) node{$\pi'(L(1)\cap\bL_{T_\cp}^{(2)})$};
\end{tikzpicture}
	    \caption{The projection of the intersection $L(1)\cap\bL_{T_\cp}$ to the moment polytope and gradient trajectories.}
		\label{fig:1toT}
\end{figure}
By the behavior of $\bL_{T_\cp}$ and $L(1)$ over each edge of the moment polytope, each $U_{(i_1,i_2)}$ forms a generator whose degree turns out to be $|U_{(i_1,i_2)}|=0$. On the other hand, the other intersections do not form generators since each $u^k_{(i_1,i_2)}$ does not satisfy Condition {\bf (M)}.
Hence we have
\begin{equation*}
\begin{split}
H^0(\Mo^\mult_\cE(P))(L(1),\bL_{T_\cp}) &\cong \Mo^\mult_\cE(P)(L(1),\bL_{T_\cp})\\
&\cong \C\cdot U_{(0,0)} \oplus \C \cdot U_{(-1,0)} \oplus \C \cdot  U_{(0,-1)},
\end{split}
\end{equation*}
since the differential is trivial for the degree reason.
\item $\Mo^\mult_\cE(P)(\bL_{T_\cp},L(2))$:\par
The projections of the intersection $\pi'(\bL_{T_\cp}\cap L(2))$ and gradient lines are depicted in Figure \ref{fig:TtoO(2)}.
\begin{figure}[h]
		\centering
\begin{tikzpicture}[scale=1.3]
\draw[dashed](-4,0)  --  (-1,0) -- (-4,3) --cycle;
\draw[dashed](4,0)  --  (1,0) -- (1,3)  --cycle;
\draw[dashed,orange] (-4,1.75) to[out=350,in=120] (-3,1);
\draw[dashed,orange] (1,1.75) to[out=340,in=120] (2,1);
\fill[black](-4,0)circle(0.06) node[left]{$v^0_{(1,0)}$};
\fill[black](-1,0)circle(0.06) node[right]{$v^1_{(1,0)}$};
\fill[black](-4,3)circle(0.06) node[left]{$v^2_{(0,1)}$};
\fill[black](1,0)circle(0.06) node[left]{$v^0_{(0,1)}$};
\fill[black](4,0)circle(0.06) node[right]{$v^1_{(1,1)}$};
\fill[black](1,3)circle(0.06) node[left]{$v^2_{(1,1)}$};
\fill[black](-2.5,0)circle(0.06) node[above]{$V_{(1,0)}$};
\fill[black](2.5,1.5)circle(0.06) node[above right]{$V_{(1,1)}$};
\fill[black](1,1.5)circle(0.06) node[left]{$V_{(0,1)}$};
\draw[thick](1,0) -- (1,1.5);
\draw[arrows = {-Stealth[scale=1.5]}](1,1.5) -- (1,0.75);
\draw[thick](1,1.75) -- (1,1.5);
\draw[arrows = {-Stealth[scale=1.5]}](1,1.5) -- (1,1.75);
\draw[thick](-4,3) -- (-4,1.75);
\draw[arrows = {-Stealth[scale=1.5]}](-4,2) -- (-4,2.5);
\draw[thick](1,3) -- (2.5,1.5);
\draw[arrows = {-Stealth[scale=1.5]}](2.5,1.5) -- (1.75,2.25);
\draw[thick](4,0) -- (2.5,1.5);
\draw[arrows = {-Stealth[scale=1.5]}](2.5,1.5) -- (3.25,0.75);
\draw[thick](-1,0) -- (-2.5,0);
\draw[arrows = {-Stealth[scale=1.5]}](-2.5,0) -- (-1.75,0);
\draw[thick](-4,0) -- (-2.5,0);
\draw[arrows = {-Stealth[scale=1.5]}](-2.5,0) -- (-3.25,0);
\draw (-2.5,-0.6) node{$\pi'(\bL_{T_\cp}^{(1)}\cap L(2))$};
\draw (2.5,-0.6) node{$\pi'(\bL_{T_\cp}^{(2)}\cap L(2))$};
\end{tikzpicture}
	    \caption{The projection of the intersection $\bL_{T_\cp}\cap L(2)$ to the moment polytope and gradient lines.}
		\label{fig:TtoO(2)}
\end{figure}
By the behavior of $\bL_{T_\cp}$ and $L(2)$ over each edge of the moment polytope, each $V_{(j_1,j_2)}$ forms a generator whose degree turns out to be $|V_{(j_1,j_2)}|=0$. On the other hand, the other intersections do not form generators since each $v^k_{(j_1,j_2)}$ does not satisfy Condition {\bf(M)}.
Hence we have
\begin{equation*}
\begin{split}
H^0(\Mo^\mult_\cE(P))(\bL_{T_\cp},L(2)) &\cong \Mo^\mult_\cE(P)(\bL_{T_\cp},L(2))\\
&\cong \C \cdot  V_{(1,1)} \oplus \C \cdot  V_{(1,0)} \oplus \C \cdot  V_{(0,1)},
\end{split}
\end{equation*}
since the differential is trivial for the degree reason.
\item $\Mo^\mult_\cE(P)(L(1),L(2))$:\par
This space is computed in \cite{FK21}.
The projections of the intersection $W_{(i_1,i_2)}\in\pi'(L(1)\cap L(2))$ are depicted in Figure \ref{fig:0toO(1)}.
\begin{figure}[h]
		\centering
\begin{tikzpicture}[scale=1]
\draw[dashed](3,0)  --  (0,0) -- (0,3)  --cycle;
\fill[black](0,0)circle(0.06);
\fill[black](3,0)circle(0.06);
\fill[black](0,3)circle(0.06);
\draw(0,0) node[left]{$W_{(0,0)}$};
\draw(3,0) node[right]{$W_{(1,0)}$};
\draw(0,3) node[left]{$W_{(0,1)}$};
\end{tikzpicture}
	    \caption{The projection of the intersection $L(1) \cap L(2)$ to the moment polytope.}
		\label{fig:0toO(1)}
\end{figure}
These points form the generators whose degrees are zero. We also have
\begin{equation*}
\begin{split}
H^0(\Mo^\mult_\cE(P))(L(1),L(2)) &\cong \Mo^\mult_\cE(P)(L(1),L(2))\\
&\cong \C \cdot  W_{(0,0)} \oplus \C \cdot  W_{(1,0)} \oplus \C \cdot  W_{(0,1)},
\end{split}
\end{equation*}
since the differential is trivial for the degree reason.
\item $\Mo^\mult_\cE(P)(\bL_{T_\cp},\bL_{T_\cp})$:\par
The Lagrangian multi-section $\bL_{T_\cp}$ has self-intersections only at each vertex of the moment polytope since $\bL_{T_\cp}$ can be realized by the embedded Lagrangian in the total space $Y'$.
Furthermore, considering the stable manifolds of each self-intersection, we see that they do not satisfy Condition {\bf(M)}. For instance, at the vertex $\check{\sigma_0}=(0,0)$, the first sheet $\bL_{T_\cp}^{(1)}$ and the second sheet $\bL_{T_\cp}^{(2)}$ of the Lagrangian multi-section $\bL_{T_\cp}=\bL_{T_\cp}^{(1)}\cup\bL_{T_\cp}^{(2)}$ is asymptotically expressed by
\begin{equation}\label{FS chern expr}
\begin{split}
\bL_{T_\cp}^{(1)} = \mathrm{graph}\left(df_{\bL_{T_\cp}}^{(1)}\right):\ \ \ \begin{pmatrix}y^1\\y^2\end{pmatrix} &= \begin{pmatrix}2x^1-1\\x^2\end{pmatrix},\\
\bL_{T_\cp}^{(2)} = \mathrm{graph}\left(df_{\bL_{T_\cp}}^{(2)}\right):\ \ \ \begin{pmatrix}y^1\\y^2\end{pmatrix} &= \begin{pmatrix}x^1\\2x^2-1\end{pmatrix},
\end{split}
\end{equation}
where $f_{\bL_{T_\cp}}^{(1)}$ (resp. $f_{\bL_{T_\cp}}^{(2)}$) is the smoothing of $\varphi^{\trop}_{T_\cp}$. The gradient vector fields are given by
\begin{equation*}
\begin{split}
\grad\left(f_{\bL_{T_\cp}}^{(2)}-(f_{\bL_{T_\cp}}^{(1)}+\xi_1-\xi_2)\right) &= -x^1\frac{\del}{\del x^1} + x^2\frac{\del}{\del x^2},\\
\grad\left(f_{\bL_{T_\cp}}^{(1)}-(f_{\bL_{T_\cp}}^{(2)}-\xi_1+\xi_2)\right) &= x^1\frac{\del}{\del x^1} - x^2\frac{\del}{\del x^2}.
\end{split}
\end{equation*}
The stable manifolds of these gradient vector fields turn out to be the $x^1$-axis or $x^2$-axis, which both cases do not satisfy Condition {\bf(M)} and this self-intersection does not form a generator of the space $\Mo^\mult_\cE(P)(\bL_{T_\cp},\bL_{T_\cp})$. The same is true for the other vertices. Therefore, no self-intersection can be the generator of the space $\Mo^\mult_\cE(P)(\bL_{T_\cp},\bL_{T_\cp})$ and we have
\begin{equation*}
\Mo^\mult_\cE(P)(\bL_{T_\cp},\bL_{T_\cp}) = \C \cdot P^{(1)} \oplus \C \cdot P^{(2)} \oplus \C \cdot b^{(12)}.
\end{equation*}
In addition, there are trivial gradient lines from $P^{(1)}$ and $P^{(2)}$ to $b_{12}$. Hence, we have $\fm_1(P^{(i)})=\pm b_{12}$. Choosing a suitable orientation of the moduli space of gradient lines, we have
\begin{equation*}
H^0\left(\Mo^\mult_\cE(P)\right)(\bL_{T_\cp},\bL_{T_\cp}) = \C \cdot \left(P^{(1)}+P^{(2)}\right).
\end{equation*}
\end{itemize}
\begin{remark}
The expression (\ref{FS chern expr}) of Lagrangian multi-section $\bL_{T_\cp}$ at a vertex is related to the Chern connection $\nabla_{FS}$ of $T_\cp$ associated to the Fubini-Study metric. For the holomorphic frame $\{\frac{\del}{\del u},\frac{\del}{\del v}\}$,  the Chern connection $\nabla_{FS}$ is given by
\begin{equation*}
\nabla_{FS} = d - A_{FS} = d-\frac{1}{1+u\bar{u}+v\bar{v}}\left(\begin{pmatrix} 2\bar{u}&\bar{v}\\0&\bar{u}  \end{pmatrix}du+ \begin{pmatrix} \bar{v}&0\\\bar{u}&2\bar{v}  \end{pmatrix}dv \right).
\end{equation*}
By using $u=e^{z_1}=e^{\xi_1+\ii y_1}$ and $v=e^{z_2}=e^{\xi_2+\ii y_2}$, it can be written as
\begin{equation*}
\begin{split}
A_{FS} &= \dfrac{1}{1+e^{2\xi_1}+e^{2\xi_2}}\begin{pmatrix}
(e^{2\xi_1}-1-e^{2xi_2})dz_1+e^{2\xi_2}dz_2&e^{2\xi_2}dz_1\\
e^{2\xi_1}dz_2&e^{2\xi_1}dz_1+(e^{2\xi_2}-1-e^{2\xi_1})dz_2
\end{pmatrix}\\
&= \begin{pmatrix}
(2x^1-1)dz_1 + x^2dz_2 & x^2dz_1\\
x^1dz_2 & x^1dz_1 + (2x^2-1)dz_2
\end{pmatrix}
\end{split}
\end{equation*}
with respect to the non-holomorphic frame $\{\frac{\del}{\del z_1},\frac{\del}{\del z_2}\}$. From the view point of the SYZ transformation, the $dy$-part of the diagonal component of the connection form corresponds to the mirror Lagrangian multi-section. In the case of a diagonal matrix, the $(k,k)$-entry corresponds to the $k$-th sheet of the mirror Lagrangian multi-section (see also \cite{CS19,Sue21}). In other cases, the correspondence is more complicated. We expect that the diagonal component is related to the mirror Lagrangian multi-section and the other components are related to some correction arising from the branch point. 
\end{remark}

To summarize, the morphisms in the cohomology $H(\Mo^{\mult}_\cE(P))$ are as follows:
\begin{equation*}
\xymatrix{
L(1) \ar@(dl, dr)_{P} \ar@/^18pt/[rr]^{U_{(-1,0)}}  \ar[rr]^{U_{(0,-1)}} \ar@/_18pt/[rr]^{U_{(0,0)}} & &  \bL_{T_\cp} \ar@(dl, dr)_{P^{(1)}+P^{(2)}} \ar@/^18pt/[rr]^{V_{(0,1)}}  \ar[rr]^{V_{(1,0)}} \ar@/_18pt/[rr]^{V_{(1,1)}} & &   L(2) \ar@(dl, dr)_{P}
}
\end{equation*}
Note that there are no morphisms in the reverse direction. This follows from Condition {\bf (M)} and the fact that all critical points lie on $\del P$.\par
Next, we compute the composition $\fm_2$ of morphisms in $\Mo^{\mult}_\cE(P)$. Firstly, we consider the gradient trees ending at $W_{(0,0)}$. There are two ordinary gradient trees:
\begin{align*}
\fm_2(U_{(-1,0)},V_{(1,0)}) &= \pm e^{-A(\gamma_1)}W_{(0,0)},& \fm_2(U_{(0,-1)},V_{(0,1)}) &= \pm e^{-A(\gamma_2)}W_{(0,0)},
\end{align*}
since the composition $\fm_2$ preserves the label $(i_1,i_2)$ and the degree.  The weights of these gradient trees are finite and equal due to the symmetry of the perturbation. We denote by $A:=A(\gamma_1)=A(\gamma_2)$ the weight of the gradient tree.
In contrast, there exist no jagged gradient trees. This can be seen as follows. Firstly, the candidates for the output of $\fm_2$ are determined by the fact that $\fm_2$ preserves the $\mathbb{Z}^2$-grading related to the lift of Lagrangians. Next, by chasing the interactions between one of the gradient lines and the spectral network, the branches of the potential function are not compatible with those of the other gradient line (see Figure \ref{fig:no jagged gradient tree}). This implies the non-existence of jagged gradient trees.
\begin{figure}[h]
		\centering
\begin{tikzpicture}[
    scale=0.7,
    > = {Stealth[length=3mm, width=2mm]},
    midarrow/.style={postaction={decorate}, decoration={markings, mark=at position #1 with {\arrow{>}}}},
    revarrow/.style={postaction={decorate}, decoration={markings, mark=at position #1 with {\arrow{<}}}}
]
\draw[dashed] (0,0) -- (10,0) -- (0,10) -- cycle;
\coordinate (O) at (0,0);
\coordinate (Y1) at (0, 4.5);
\coordinate (X1) at (4.8, 0);
\coordinate (C) at (5.2, 3.2); 
\coordinate (S) at (3.3, 3.3); 
\draw[blue, very thick, midarrow=0.33, midarrow=0.66] (X1) -- (O);
\draw[black, semithick, revarrow=0.5]
    (0, 7.2) node[left]{$(21)$-wall} .. controls (1.5, 6.8) and (4, 5) .. (S);
\draw[black, semithick, midarrow=0.5]
    (S) .. controls (2, 2) and (3, 0) .. (3, 0) node[below]{$(21)$-wall};
\draw[black, semithick]
    (S) .. controls  (4, 2.8) and (4.6, 2.9) .. (C);
\draw[black, semithick, midarrow=0.6]
    (C) .. controls  (5.8, 3.5) and (6, 3.7) .. (6.15, 3.85) node[above right]{$(12)$-wall};
\draw[orange, dashed]
    (S) .. controls (2.0, 4.6) and (1.0, 5.2) .. (0, 5.6) node[left]{$(12)$-branch cut};
\draw[red, very thick, midarrow=0.3, midarrow=0.8]
    (Y1) .. controls (2.5, 4.5) and (4.2, 4.0) .. (C);
\draw[red, very thick, midarrow=0.5]
    (C) .. controls (4.3, 2.4) and (3.5, 2) .. (2.78,2.5);
\draw[red, very thick, midarrow=0.5]
    (2.78,2.5) .. controls (2, 2.4) and (1, 1.4) .. (O);
\fill[black] (O) circle (2.5pt) node[below left]{$W_{(0,0)}$};
\fill[black] (Y1) circle (2.5pt) node[left]{$U_{(-1,0)}$};
\fill[black] (X1) circle (2.5pt) node[below]{$V_{(1,0)}$};
\fill[red] (C) circle (1.5pt);
\fill[red] (2.78,2.5) circle (1.5pt);
\draw (1.5,4) node[red]{$(1)$};
\draw (2.7,4.6) node[red]{$(2)$};
\draw (4.5,4.2) node[red]{$(2)$};
\draw (4.5,2) node[red]{$(1)$};
\draw (1.5,1) node[red]{$(2)$};
\end{tikzpicture}
	    \caption{The jagged gradient line emanating from $U_{(-1,0)}$. The flow of $\grad(f^{(1)}_{\bL_{T_\cp}}-f_{L(1)})$ is represented by $(1)$ and the flow $\grad(f^{(2)}_{\bL_{T_\cp}}-f_{L(1)})$ is represented by $(2)$.
If that line reaches $W_{(0,0)}$, then it firstly starts by $\grad(f^{(1)}_{\bL_{T_\cp}}-f_{L(1)})$ and finally proceeds by $\grad(f^{(2)}_{\bL_{T_\cp}}-f_{L(1)})$. However the gradient line emanating from $V_{(-1,0)}$ proceeds by $\grad(f_{L(2)}-f^{(1)}_{\bL_{T_\cp}})$. Since the branches of $f_{\bL_{T_\cp}}$ do not coincide, these lines do not form a jagged gradient tree.}
		\label{fig:no jagged gradient tree}
\end{figure}
The gradient trees ending at $W_{(1,0)}$ and $W_{(0,1)}$ are the same as above, namely
\begin{align*}
\fm_2(U_{(0,0)},V_{(1,0)}) &= \pm e^{-B}W_{(1,0)},& \fm_2(U_{(0,-1)},V_{(1,1)}) &= \pm e^{-B}W_{(1,0)},\\
\fm_2(U_{(0,0)},V_{(0,1)}) &= \pm e^{-C}W_{(0,1)},& \fm_2(U_{(-1,0)},V_{(1,1)}) &= \pm e^{-C}W_{(0,1)}.
\end{align*}
It remains to determine the sign $\pm$ for each product. These signs are given by the orientation of the moduli space $\cM(U_{(i_1,i_2)},V_{(j_1,j_2)};W_{(i_1+j_1,i_2+j_2)})$ of gradient trees. Also, the orientation of $\cM(U_{(i_1,i_2)},V_{(j_1,j_2)};W_{(i_1+j_1,i_2+j_2)})$ is induced by the orientations of the unstable manifold associated to $U_{(i_1,i_2)},V_{(j_1,j_2)}$ and of the stable manifold associated to $W_{(i_1+j_1,i_2+j_2)}$.
By choosing a suitable orientation for each unstable manifold, we obtain the following:
\begin{align}\label{rel in Mo(P)}
\fm_2(U_{(-1,0)},V_{(1,0)}) &= - e^{-A}W_{(0,0)},& \fm_2(U_{(0,-1)},V_{(0,1)}) &= e^{-A}W_{(0,0)},\notag\\
\fm_2(U_{(0,0)},V_{(1,0)}) &= - e^{-B}W_{(1,0)},& \fm_2(U_{(0,-1)},V_{(1,1)}) &= e^{-B}W_{(1,0)},\\
\fm_2(U_{(0,0)},V_{(0,1)}) &= e^{-C}W_{(0,1)},& \fm_2(U_{(-1,0)},V_{(1,1)}) &= -e^{-C}W_{(0,1)}.\notag
\end{align}
These compositions induce the compositions in the cohomology $H(\Mo^\mult_\cE(P))$.
Finally, we consider the compositions with endomorphisms of $\bL_{T_\cp}$. The morphism $P^{(r)}$ is regarded as the projection $\pi'(\bL^{(r)}_{T_\cp}\cap\bL^{(r)}_{T_\cp})$. Hence, if $U_{(i_1,i_2)}\in\pi'(L(1)\cap\bL_{T_\cp}^{(r)})$, then $\fm_2(U_{(i_1,i_2)},P^{(r)})=U_{(i_1,i_2)}$ since there is the only trivial gradient tree. It is the same as the morphisms $V_{(j_1,j_2)}$. Therefore, $P^{(1)}+P^{(2)}$ forms the cohomological unit, i.e., $P^{(1)}+P^{(2)}$ is the identity morphism in the cohomology $H(\Mo^\mult_\cE(P))(\bL_{T_\cp},\bL_{T_\cp})$. For the degree reason, the cohomology $H(\Mo_\cE^\mult(P))$ has no higher products. In contrast, $Mo_\cE^\mult(P)$ has higher products such as $\fm_3(U_{(i_1,i_2)},b^{(12)},V_{(j_1,j_2)})$. From the $A_\infty$-relation corresponding to the associativity, there must exist nontrivial higher products. However, in this case, we can define a non-trivial $A_\infty$-structure inductively and algebraically\footnote{The reason we say ``algebraically" here is that since self-hom is defined somewhat algebraically, we also define its product algebraically.} from lower orders. Since the higher products of $H(\Mo_\cE^\mult(P))$ vanish, all $A_\infty$-structures of $\Mo_\cE^\mult(P))$ are quasi-isomorphic. Therefore, the cohomology $H(\Mo_\cE^\mult(P))$ is independent of the higher products of $\Mo_\cE^\mult(P)$.

\subsection{Proof of the main theorem}\label{proof of main thm}
In this subsection, we prove the main theorem. We consider the dg-category to be an $A_\infty$-category in which higher products $\fm_k\ (k\geq3)$ vanish. We firstly prove the following.
\begin{lemma}\label{main lem}
We have an $A_\infty$-equivalence\footnote{An \textit{$A_\infty$-equivalence} is a cohomologically unital $A_\infty$-functor $\mathcal{F}: \mathcal{C} \to \mathcal{D}$ between two cohomologically unital $A_\infty$-categories such that the induced functor $H(\mathcal{F}): H(\mathcal{C}) \to H(\mathcal{D})$ is an equivalence (see \cite{Sei08}, where it is called a \textit{quasi-equivalence}).}
\begin{equation*}
\iota: H(\Mo^{\mult}_\cE(P)) \simeq H(\DG^\vect_\cE(\cp)).
\end{equation*}
\end{lemma}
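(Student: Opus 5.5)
The plan is to write down the equivalence $\iota$ explicitly. Both sides are graded linear categories with three objects, concentrated in degree zero, and all their nonzero morphisms go "forward". So it is enough to give a bijection on objects and linear isomorphisms on each morphism space, and then check that these isomorphisms respect units and the compositions $\fm_2$. No higher products need to be checked: $H(\Mo^\mult_\cE(P))$ has none, as shown at the end of subsection \ref{morphisms in mult Mo(P)}, and $H(\DG^\vect_\cE(\cp))$ has none because it is the cohomology of a dg category, so its products are strictly associative and there is no $\fm_k$ for $k\geq 3$.

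\textbf{Step 1: the complex side.} I will first record the cohomology $H(\DG^\vect_\cE(\cp))$. Since $\cE$ is a strongly exceptional collection, all Dolbeault cohomology is concentrated in degree zero, and the reverse Hom spaces vanish. The forward spaces are:
\begin{itemize}
\item $H^0(\cO(1)^\vee\otimes T_\cp)$, which is $3$-dimensional, for example via the Euler sequence twisted by $\cO(-1)$;
\item $H^0(T_\cp^\vee\otimes\cO(2))=H^0(\Omega_\cp(2))$, which is $3$-dimensional;
\item $H^0(\cO(1))$, with basis $1,u,v$;
\item the endomorphism spaces, each equal to $\C\cdot\id$.
\end{itemize}
I will check that the local expressions (\ref{morphisms}) glue to global holomorphic bundle maps. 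Their linear independence is then visible from the compositions (\ref{rel in DGvect}), so $\{F_1,F_2,F_3\}$ and $\{G_1,G_2,G_3\}$ are bases of the first two spaces.

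\textbf{Step 2: definition of $\iota$.} On objects, $\iota$ sends $L(1)\mapsto\cO(1)$, $\bL_{T_\cp}\mapsto T_\cp$ and $L(2)\mapsto\cO(2)$. On units it sends $P\mapsto\id$ and $P^{(1)}+P^{(2)}\mapsto\id_{T_\cp}$. On the remaining generators, I match the $\Z^2$-labels with the monomial weights as follows:
\begin{align*}
&U_{(-1,0)}\mapsto F_1,\quad U_{(0,-1)}\mapsto F_2,\quad U_{(0,0)}\mapsto F_3,\\
&V_{(0,1)}\mapsto G_1,\quad V_{(1,0)}\mapsto G_2,\quad V_{(1,1)}\mapsto G_3,\\
&W_{(0,0)}\mapsto e^{A}\cdot 1,\quad W_{(1,0)}\mapsto e^{B}u,\quad W_{(0,1)}\mapsto e^{C}v .
\end{align*}
Each map is a bijection between bases, so $\iota$ is fully faithful and bijective on objects, hence an equivalence, provided it is a functor.

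\textbf{Step 3: compatibility with composition.} With $\fm_2(a,b)$ corresponding to $\iota(b)\circ\iota(a)$, I compare (\ref{rel in Mo(P)}) with (\ref{rel in DGvect}) case by case. For instance, $\fm_2(U_{(-1,0)},V_{(1,0)})=-e^{-A}W_{(0,0)}$ is sent to $-1=G_2\circ F_1$, and $\fm_2(U_{(0,-1)},V_{(0,1)})=e^{-A}W_{(0,0)}$ is sent to $1=G_1\circ F_2$. The four products involving $B$ and $C$ match $\mp u$ and $\pm v$ in the same way. The three vanishing compositions $G_1F_1$, $G_2F_2$, $G_3F_3$ correspond to label sums $(-1,1)$, $(1,-1)$ and $(1,1)$. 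None of these is the label of a $W$, so the corresponding products vanish because $\fm_2$ preserves the $\Z^2$-grading. Compositions with units are compatible because the unit products listed in subsection \ref{morphisms in mult Mo(P)} (for example $\fm_2(U,P^{(r)})=U$, and $P^{(1)}+P^{(2)}$ acting as the cohomological unit) correspond to composition with identities.

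\textbf{Main obstacle.} The delicate inputs are the signs and areas in (\ref{rel in Mo(P)}), together with the absence of jagged trees. I take both as established earlier in the paper. The real consistency check is that a single rescaling of each $W$ makes both products landing on that $W$ agree simultaneously. The equal-weight symmetry $A(\gamma_1)=A(\gamma_2)$, and its analogues for $B$ and $C$, is exactly what guarantees this. If some sign did not match, I would first try to fix it by flipping the sign of one of the basis elements $U$ or $V$, that is, by changing the chosen orientations of the unstable manifolds.
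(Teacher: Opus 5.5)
Your proposal is correct and is essentially the paper's proof. You use the same explicit assignment $U\mapsto F$, $V\mapsto G$, $W_{(0,0)}\mapsto e^{A}\cdot 1$, $W_{(1,0)}\mapsto e^{B}u$, $W_{(0,1)}\mapsto e^{C}v$, check it against (\ref{rel in DGvect}) and (\ref{rel in Mo(P)}), and the higher products vanish because everything is concentrated in degree zero. Your additional checks (the images of the units, the three zero compositions via the $\Z^2$-grading, and the basis verification on the complex side) make explicit what the paper leaves implicit.
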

\begin{proof}
Firstly, the higher products of $H(\Mo^{\mult}_\cE(P))$ vanish since morphisms in $H(\Mo^{\mult}_\cE(P))$ are of degree zero. Namely, we can regard $H(\Mo^{\mult}_\cE(P))$ as a dg category in the usual sense.
The correspondence between the objects is discussed in subsection \ref{correspondence between objects}. For both categories $H(\DG^\vect_\cE(\cp))$ and $H(\Mo^{\mult}_\cE(P))$, we computed the compositions of morphisms in section \ref{complex side} and subsection \ref{morphisms in mult Mo(P)}. 
We assign the correspondence of morphisms as follows: 
\begin{align*}
\iota(U_{(-1,0)}) &= F_1, & \iota(U_{(0,-1)}) &= F_2, & \iota(U_{(0,0)}) &= F_3,\\
\iota(V_{(0,1)}) &= G_1, & \iota(V_{(1,0)}) &= G_2, & \iota(V_{(1,1)}) &= G_3\\
\iota(W_{(0,0)}) &= e^{A} \cdot 1, & \iota(W_{(1,0)}) &= e^{B} \cdot u, & \iota(W_{(0,1)}) &= e^{C} \cdot v.
\end{align*}
This assignment is compatible with the compositions (\ref{rel in DGvect}) and (\ref{rel in Mo(P)}). Namely, we have
\begin{equation*}
\iota(\fm_2(U_{(i_1,i_2)},V_{(j_1,j_2)})) = \iota(V_{(j_1,j_2)}) \circ \iota(U_{(i_1,i_2)}).
\end{equation*}
Thus, this assignment turns out to be an $A_\infty$-equivalence. 
\end{proof}
\begin{theorem}\label{main theorem prov}
We have an $A_\infty$-equivalence
\begin{equation*}
\Mo^{\mult}_\cE(P) \simeq \DG^\vect_\cE(\cp).
\end{equation*}
\end{theorem}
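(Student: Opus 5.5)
The plan is to deduce Theorem \ref{main theorem prov} from Lemma \ref{main lem} by formality on both sides, composed with the equivalence $\iota$.

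\textbf{Formality of $\DG^\vect_\cE(\cp)$.} Since $\cE=(\cO(1),T_\cp,\cO(2))$ is strongly exceptional, $H^k(\DG^\vect_\cE(\cp))(E_a,E_b)=\mathrm{Ext}^k(E_a,E_b)$ vanishes for $k\neq 0$. Its degree zero part consists of the holomorphic bundle maps computed in section \ref{complex side}. A dg (or $A_\infty$) category whose cohomology is concentrated in degree zero is formal. Concretely, homological perturbation transfers the structure to an $A_\infty$-structure on $H(\DG^\vect_\cE(\cp))$. The transferred $\fm_k$ has degree $2-k$, so it vanishes for $k\geq 3$ on degree-zero inputs, and $\fm_1=0$. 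The homotopy transfer also supplies an $A_\infty$-quasi-equivalence $H(\DG^\vect_\cE(\cp))\to\DG^\vect_\cE(\cp)$ that is cohomologically unital.

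\textbf{Formality of $\Mo^\mult_\cE(P)$.} By subsection \ref{morphisms in mult Mo(P)}, the cohomology $H(\Mo^\mult_\cE(P))$ is concentrated in degree zero. The only non-degree-zero generator is $b^{(12)}$, and it is killed by $\fm_1(P^{(i)})=\pm b^{(12)}$. Moreover, $P^{(1)}+P^{(2)}$ and $P$ are cohomological units. I take a minimal model via homological perturbation: choose a splitting of $\Mo^\mult_\cE(P)(\bL_{T_\cp},\bL_{T_\cp})$ into $\C(P^{(1)}+P^{(2)})\oplus\C P^{(1)}\oplus\C b^{(12)}$ with contracting homotopy $h(b^{(12)})=\pm P^{(1)}$; all other hom spaces have $\fm_1=0$. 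The transferred $\fm_2$ agrees with the composition computed in (\ref{rel in Mo(P)}), and the transferred higher products again vanish by degree. This gives a cohomologically unital quasi-equivalence $\Mo^\mult_\cE(P)\to H(\Mo^\mult_\cE(P))$. The remark at the end of subsection \ref{morphisms in mult Mo(P)} says the algebraically defined higher products are unique up to quasi-isomorphism, so this step does not depend on those choices.

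\textbf{Composition.} Compose $\Mo^\mult_\cE(P)\to H(\Mo^\mult_\cE(P))\xrightarrow{\iota}H(\DG^\vect_\cE(\cp))\to\DG^\vect_\cE(\cp)$, where $\iota$ is the functor from Lemma \ref{main lem}. Each factor induces an equivalence on cohomology categories, so the composite is an $A_\infty$-equivalence.

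\textbf{Main obstacle.} The delicate point is the transfer on the $\Mo^\mult$ side, because its higher products (e.g.\ $\fm_3(U,b^{(12)},V)$) are only defined inductively and algebraically. I would verify that the perturbation-lemma formulas use only $\fm_1,\fm_2$ and the prescribed $\fm_{\geq3}$, so that they make sense. I would also check that degree counting still forces the transferred $\fm_{\geq 3}$ to vanish, even though $b^{(12)}$ appears at chain level. Finally, I need that the resulting quasi-isomorphism preserves cohomological units, i.e.\ that the unit $P^{(1)}+P^{(2)}$ maps to $\id_{T_\cp}$; this follows from the unit computation $\fm_2(U,P^{(r)})=U$.
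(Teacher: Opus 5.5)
Your proposal is correct and follows essentially the same route as the paper: take minimal models (homotopy transfer) of both $\Mo^\mult_\cE(P)$ and $\DG^\vect_\cE(\cp)$, note that their higher products vanish because all cohomology sits in degree zero, and compose with the equivalence $\iota$ of Lemma \ref{main lem}. The only cosmetic difference is that you use the projection-direction $A_\infty$-morphism $\Mo^\mult_\cE(P)\to H(\Mo^\mult_\cE(P))$ supplied by the transfer, whereas the paper inverts the inclusion $p$; you also make explicit the contraction on $\Mo^\mult_\cE(P)(\bL_{T_\cp},\bL_{T_\cp})$ that the paper leaves implicit.
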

\begin{proof}
For any $A_\infty$-category $(\mathcal{C},\{m_k\}_{k\geq1})$, there exists an $A_\infty$-category $(H(\mathcal{C}),\{m'_k\}_{k\geq1})$ and an $A_\infty$-equivalence $(H(\mathcal{C}),\{m'_k\}_{k\geq1}) \to (\mathcal{C},\{m_k\}_{k\geq1})$. This is known as the minimal model theorem. For more details, see \cite{Sei08, Kaj13}.
We apply the minimal model theorem to $\Mo^{\mult}_\cE(P)$, $\DG^\vect_\cE(\cp)$ and denote by $p: H(\Mo^\mult_\cE(P)) \to \Mo^\mult_\cE(P)$, $q: H(\DG^\vect_\cE(\cp)) \to \DG^\vect_\cE(\cp)$ the $A_\infty$-equivalences.
Also, any $A_\infty$-equivalence has its inverse $A_\infty$-equivalence. Hence, we can consider the composition $q \circ \iota \circ p^{-1}: \Mo^\mult_\cE(P) \to \DG^\vect_\cE(\cp)$ where $\iota: H(\Mo^{\mult}_\cE(P)) \to H(\DG^\vect_\cE(\cp))$ is the $A_\infty$-equivalence constructed in Lemma \ref{main lem}.
The composition of $A_\infty$-equivalences is an $A_\infty$-equivalence. Therefore, $q \circ \iota \circ p^{-1}$ is an $A_\infty$-equivalence.
\begin{equation*}
\xymatrix{
\Mo^\mult_\cE(P) \ar@{-->}[r]^{q\circ \iota \circ p^{-1}}& \DG^\vect_\cE(\cp)\\
H(\Mo^\mult_\cE(P)) \ar[u]^p \ar[r]_{\iota}& H(\DG^\vect_\cE(\cp))\ar[u]_q
}
\end{equation*}
\end{proof}
\begin{corollary}\label{main cor prov}
We have an equivalence of triangulated categories
\begin{equation*}
Tr(\Mo^{\mult}_\cE(P)) \simeq D^b(\Coh(\cp)).
\end{equation*}
\end{corollary}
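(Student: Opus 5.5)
The corollary should follow formally from Theorem \ref{main theorem prov} together with the Bondal--Kapranov--Kontsevich twisted complex construction and the fact that $\cE=(\cO(1),T_\cp,\cO(2))$ is a full strongly exceptional collection. The plan has three steps.

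\textbf{Step 1: functoriality of $Tr$.} I will show that $Tr$ sends $A_\infty$-equivalences to equivalences of triangulated categories. Given the $A_\infty$-equivalence $\mathcal{F}:=q\circ\iota\circ p^{-1}:\Mo^\mult_\cE(P)\to\DG^\vect_\cE(\cp)$ from Theorem \ref{main theorem prov}, I extend it to an $A_\infty$-functor on twisted complexes, $Tr(\mathcal{F}):Tr(\Mo^\mult_\cE(P))\to Tr(\DG^\vect_\cE(\cp))$. This extension acts on a twisted complex $(\bigoplus_i X_i[r_i],\delta)$ by applying the components $\mathcal{F}_k$ to the strictly lower triangular differential $\delta$, summing over all $k$. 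Here I use that the Maurer--Cartan sum is finite by lower triangularity.

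\textbf{Step 2: $H^0(Tr(\mathcal{F}))$ is an equivalence.} I argue by induction on the length of the twisted complexes (the filtration by cones). The starting case is that $H(\mathcal{F})$ is an equivalence on the generating objects, which is Lemma \ref{main lem}. The inductive step uses the five lemma on the long exact sequences in hom-spaces induced by cones, which gives full faithfulness. Essential surjectivity also follows: every object of $Tr(\DG^\vect_\cE(\cp))$ is an iterated cone of shifts of objects of $\cE$, and $\mathcal{F}$ hits each of these up to isomorphism. The functor is exact by construction. For this step I will cite \cite{Sei08} (Section 3) and \cite{BK} rather than redo the argument.

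\textbf{Step 3: identify the target.} Invoke the equivalence $D^b(\Coh(\cp))\simeq Tr(\DG^\vect_\cE(\cp))$ recalled in the introduction. It is a consequence of Beilinson's theorem that $\cE$ is a full strongly exceptional collection, together with the fact that $\DG^\vect_\cE(\cp)$ is a dg enhancement computing $\mathrm{Ext}^*$ between the bundles via Dolbeault resolutions. Composing this with Step 2 yields the claimed equivalence.

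The main obstacle is really just bookkeeping. One must make sure that the cohomological units, namely $P$ on $L(1)$ and $L(2)$ and $P^{(1)}+P^{(2)}$ on $\bL_{T_\cp}$, are preserved, so that $Tr$ of a cohomologically unital category is well behaved. This is already part of the definition of an $A_\infty$-equivalence in Lemma \ref{main lem}, so I expect it to go through directly.
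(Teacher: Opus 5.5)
Your proposal is correct and follows the paper's route. The $A_\infty$-equivalence of Theorem~\ref{main theorem prov} gives $Tr(\Mo^\mult_\cE(P))\simeq Tr(\DG^\vect_\cE(\cp))$, and this is composed with $Tr(\DG^\vect_\cE(\cp))\simeq D^b(\Coh(\cp))$, which comes from $\cE$ being a full strongly exceptional collection.

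The only difference is that you sketch why $Tr$ is invariant under $A_\infty$-equivalences (Seidel's induction on twisted complexes using the five lemma), whereas the paper simply asserts it. One small correction to Step 3: Beilinson's theorem directly gives the collection $(\cO,\cO(1),\cO(2))$, and $\cE$ is obtained from it by the right mutation $R_1$, with strongness checked via the Euler sequence.
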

\begin{proof}
Since $\cE$ is a full (strongly) exceptional collection of $D^b(\Coh(\cp))$, we have an equivalence of triangulated categories
\begin{equation}\label{B-side equiv}
Tr(\DG^\vect_\cE(\cp))  \simeq D^b(\Coh(\cp)).
\end{equation}
For two $A_\infty$-equivalent $A_\infty$-categories $\mathcal{C}$ and $\mathcal{D}$, the induced triangulated categories $Tr(\mathcal{C})$ and $Tr(\mathcal{D})$ are equivalent as triangulated categories. Hence, by Theorem \ref{main theorem prov}, we have
\begin{equation}\label{main equiv}
Tr(\Mo^\mult_\cE(P)) \simeq Tr(\DG^\vect_\cE(\cp)).
\end{equation}
Combining the equivalences (\ref{B-side equiv}) and (\ref{main equiv}), the proof is complete.
\end{proof}
\begin{remark}
The products \eqref{rel in Mo(P)} are symmetric since we take the perturbation of $\bL_{T_\cp}$ in the neighborhood of $\del P$ symmetrically. We can construct an $A_\infty$-equivalence between $H(\Mo^\mult_\cE(P))$ and $H(\DG^\vect_\cE(\cp))$ even if the perturbation is not symmetric.
Since the Lagrangian is perturbed by the Hamiltonian diffeomorphism, these weights are represented by the values of the Hamiltonian at the vertices of the moment polytope by Stokes' theorem.
This allows us to construct an $A_\infty$-equivalence by appropriately assigning coefficients to each assignment $U \mapsto F$ and $V \mapsto G$. 
This shows that the cohomology $H(\Mo^\mult_\cE(P))$ is independent of the perturbations by the Hamiltonian diffeomorphisms.
\end{remark}

\begin{remark}\label{necessity of jagged lines}
Fortunately, the proof of the main theorem do not require jagged gradient lines or jagged gradient trees. However, in general, these are necessary. In Section \ref{Example of nontrivial jagged gradient tree in CP2}, we consider a non-trivial jagged gradient tree and the composition associated to the tree in $\Mo^\mult(P)$. We will see that this non-trivial composition corresponds to the composition of morphisms between toric vector bundles in $\DG^\vect(\cp)$.
\end{remark}

\section{The global sections of the tangent bundle}\label{global sections and its mirror}
In this section, we consider the space of morphisms from the structure sheaf $\cO$ to the holomorphic tangent bundle $T_\cp$ and the holomorphic cotangent bundle $\Omega_{\cp}$.
Naively considering the mirror side of this, the two mirror Lagrangians intersect at the ramification locus.
Since the definition of $Mo^{\mult}(P)$ excludes such cases, we need to perturb the Lagrangians. However, by allowing such cases and formally adding a morphism corresponding to the intersection at the ramification locus, we can easily compute the space of morphisms.

\subsection{The morphisms from the structure sheaf to the tangent bundle}
By the Euler sequence
\begin{equation*}
0 \longrightarrow \cO \longrightarrow \cO(1)^{\oplus3} \longrightarrow T_{\cp} \longrightarrow 0,
\end{equation*}
we have
\begin{equation*}
\begin{split}
H^k(\DG^\vect(\cp))(\cO,T_{\cp}) &\cong \begin{cases}\Gamma(\cp,T_{\cp}) & (k=0)\\ 0 & (\text{otherwise}) \end{cases}\\
\dim H^0(\DG^\vect(\cp))(\cO,T_{\cp}) &= \dim \Gamma(\cp,T_{\cp}) = 8,
\end{split}
\end{equation*}
where $\Gamma(\cp,T_{\cp})$ is the space of global sections.
In order to express the global sections, we use the notion of the parliament of polytopes associated to toric vector bundles introduced by Di Rocco-Jabbusch-Smith in \cite{DJS14}.
For a toric vector bundle, the lattice points in the parliament of polytopes  correspond to the generators of the space of global sections.
This expression is a generalization of the fact that the global sections of line bundles over a toric variety can be expressed by using a polytope and lattice points. For more details see \cite{CLS11,DJS14}. Figure \ref{fig:PPTCP2} shows the parliament of polytopes associated to the holomorphic tangent bundle $T_\cp$.
\begin{figure}[h]
\centering
\begin{tikzpicture}
\draw(-2,0) --  (2,0);
\draw(0,-2) --  (0,2);
\draw(-2,2) --  (2,-2);
\draw(1.5,0) -- (0,0) -- (0,1.5) --cycle; 
\draw(-1.5,1.5) --  (-1.5,0) -- (0,0) --cycle; 
\draw(0,-1.5) --  (0,0) -- (1.5,-1.5) --cycle; 
\fill[red,opacity=0.2](1.5,0) -- (0,0) -- (0,1.5) --cycle;
\fill[cyan,opacity=0.2](-1.5,0) --  (0,0) -- (-1.5,1.5) --cycle; 
\fill[green,opacity=0.2](0,-1.5) --  (0,0) -- (1.5,-1.5) --cycle;
\fill[black](0,0)circle(0.06);
\fill[black](0,1.5)circle(0.06);
\fill[black](1.5,0)circle(0.06);
\fill[black](-1.5,0)circle(0.06);
\fill[black](0,-1.5)circle(0.06);
\fill[black](1.5,-1.5)circle(0.06);
\fill[black](-1.5,1.5)circle(0.06);
\draw[cyan](-2.1,0.75)node{$P_{(-1,0)}$};
\draw[red](1,1)node{$P_{(1,1)}$};
\draw[green](1.5,-0.75)node{$P_{(0,-1)}$};
\draw (1.5,0) node[below]{$1$};
\draw (-1.5,0) node[below]{$-1$};
\draw (0,1.5) node[left]{$1$};
\draw (0,-1.5) node[left]{$-1$};
\end{tikzpicture}
\caption{The parliament of polytopes associated to $T_\cp$.}
\label{fig:PPTCP2}
\end{figure}
The lattice points in these polytopes correspond to the global sections as follows:
\begin{align}\label{globalsectionPP}
(-1,0)&\otimes\chi^{-(-1,1)}, &  & &  (1,1)&\otimes\chi^{-(0,1)}, &  & \notag\\
(-1,0)&\otimes\chi^{-(-1,0)}, & (-1,0)&\otimes\chi^{-(0,0)},  & (1,1)&\otimes\chi^{-(0,0)}, & (1,1)&\otimes\chi^{-(1,0)}, \notag\\
& & & & (0,-1)&\otimes\chi^{-(0,0)}, & & \\
& & & & (0,-1)&\otimes\chi^{-(0,-1)}, & (0,-1)&\otimes\chi^{-(1,-1)}.\notag
\end{align}
By this expression, we have $\dim \Gamma(\cp,T_{\cp}) = 8$ since $(0,-1)\otimes\chi^{(0,0)} + (1,1)\otimes\chi^{(0,0)} + (-1,0)\otimes\chi^{(0,0)} =0$.\par
On the other hand, we compute the morphisms in the mirror side.
The Lagrangian multi-section $\bL_{T_\cp}$ associated to the holomorphic tangent bundle $T_\cp$ is given in subsection \ref{correspondence between objects}.
Figure \ref{fig:asympt cond} shows the projection $V_{(i_1,i_2)}$ of the intersection between the zero section $L(0)$ and the Lagrangian multi-section $\bL_{T_\cp}$ to the moment polytope. 
This follows from the behavior near the boundary of the moment polytope and the connectedness of the Lagrangian.
\begin{figure}[h]
		\centering
\begin{tikzpicture}[scale=1.3]
\draw[dashed](-4,0)  --  (-1,0) -- (-4,3) --cycle;
\draw[dashed](5,0)  --  (2,0) -- (2,3)  --cycle;
\draw[dashed,orange] (-4,1.25) to[out=10,in=160] (-3,1);
\draw[dashed,orange] (2,1.25) to[out=10,in=160] (3,1);
\fill[black](-4,0)circle(0.06);
\fill[black](-1,0)circle(0.06);
\fill[black](2,0)circle(0.06);
\fill[black](5,0)circle(0.06);
\fill[black](-4,3)circle(0.06);
\fill[black](2,3)circle(0.06);
\fill[black](-2.5,0)circle(0.06);
\fill[black](-4,1.5)circle(0.06);
\fill[black](3.5,1.5)circle(0.06);
\fill[black](-3,1)circle(0.06);
\fill[black](3,1)circle(0.06);
\draw(-4,0) node[left]{$V_{(-1,0)}$};
\draw (2,0) node[left]{$V_{(0,-1)}$};
\draw(5,0) node[right]{$V_{(1,-1)}$};
\draw(-1,0) node[right]{$V_{(1,0)}$};
\draw (-4,3) node[left]{$V_{(0,1)}$};
\draw (2,3) node[left]{$V_{(-1,1)}$};
\draw(-2.5,0) node[above]{$V_{(0,0)}$};
\draw(-4,1.5) node[left]{$V_{(0,0)}$};
\draw(3.5,1.5) node[above right]{$V_{(0,0)}$};
\draw(-3,1) node[right]{$V_{(0,0)}^b$};
\draw(3,1) node[right]{$V_{(0,0)}^b$};
\draw (-2.5,-0.6) node{$\pi'(L(0)\cap\bL_{T_\cp}^{(1)})$};
\draw (3.5,-0.6) node{$\pi'(L(0)\cap\bL_{T_\cp}^{(2)})$};
\end{tikzpicture}
	    \caption{The projection of the intersection $L(0)\cap\bL_{T_\cp}$ to the moment polytope.}
		\label{fig:asympt cond}
\end{figure}
The degree of $V_{(i_1,i_2)}$ is equal to $0$ except for $V^b_{(0,0)}$. 
Indeed, near the boundary $\del P$ of the moment polytope, each sheet of the Lagrangian multi-section $\bL_{T_\cp}$ is a Lagrangian section and behaves like $L(1)$ or $L(2)$ asymptotically. Hence the degree of $V_{(i_1,i_2)}$ is equal to $0$.
In contrast, we can not define the degree of $V^b_{(0,0)}$ since the potential function of $\bL_{T_{\cp}}$ around $V^b_{(0,0)}$ is not a Morse function. However, we can compute the space $\Mo^{\mult}(P)(L(0),\bL_{T_\cp})$ of morphisms by formally adding $V^b_{(0,0)}$ as a generator to $\Mo^{\mult}(P)(L(0),\bL_{T_\cp})$ and defining the degree\footnote{This formal degree is related to the Maslov index of the intersection point of two Lagrangians. In this case, the Maslov index of the intersection $L(0)\cap\bL_{T_\cp}$ is equal to $1$.} of $V^b_{(0,0)}$ to be $1$. 
Figure \ref{fig:grad traj of tangent bundle} shows the gradient lines associated to $V_{(0,0)}=V^{0}_{(0,0)} \cup V^{1}_{(0,0)} \cup V^{2}_{(0,0)}$ and $V^b_{(0,0)}$. 
\begin{figure}[h]
		\centering
\begin{tikzpicture}[scale=1.5]
\draw[dashed](-4,0)  --  (-1,0) -- (-4,3) --cycle;
\draw[dashed](4,0)  --  (1,0) -- (1,3)  --cycle;
\draw[dashed,orange] (-4,1.25) to[out=10,in=160] (-3,1);
\draw[dashed,orange] (1,1.25) to[out=10,in=160] (2,1);
\fill[black](-2.5,0)circle(0.06);
\fill[black](-4,1.5)circle(0.06);
\fill[black](2.5,1.5)circle(0.06);
\fill[black](-3,1)circle(0.06);
\fill[black](2,1)circle(0.06);
\draw(-2.5,0) node[above right]{$V^2_{(0,0)}$};
\draw(-4,1.5) node[left]{$V^1_{(0,0)}$};
\draw(2.5,1.5) node[above right]{$V^0_{(0,0)}$};
\draw(-3,1) node[above right]{$V_{(0,0)}^b$};
\draw(2,1) node[below]{$V_{(0,0)}^b$};
\draw[-stealth,postaction=decorate,
    decoration={markings,
    mark = between positions 0.2 and 1 step 0.3
    with {\arrow{stealth}}}](-2.5,0)to[out=140,in=240](-3,1);
\draw[-stealth,postaction=decorate,
    decoration={markings,
    mark = between positions 0.2 and 1 step 0.3
    with {\arrow{stealth}}}](-4,1.5)to[out=10,in=120](-3,1);
\draw[-stealth,postaction=decorate,
    decoration={markings,
    mark = between positions 0.2 and 1 step 0.3
    with {\arrow{stealth}}}](2.5,1.5)to[out=280,in=340](2,1);
\end{tikzpicture}
	    \caption{The gradient lines emanating from $V_{(0,0)}$ and ending at $V^b_{(0,0)}$.}
		\label{fig:grad traj of tangent bundle}
\end{figure}
Then, we have
\begin{align*}
\fm_1(V^0_{(0,0)}) &= e^{-A}V^b_{(0,0)}, & \fm_1(V^1_{(0,0)}) &= e^{-B}V^b_{(0,0)}, & \fm_1(V^2_{(0,0)}) &= e^{-C}V^b_{(0,0)},
\end{align*}
where $A,B,C\in\R$ are the weights associated to the gradient lines. We also have $\fm_1(V^b_{(0,0)})=0$ by the degree reason.
To summarize, we obtain the following results.
\begin{theorem}
The zero-th cohomology $H^0(\Mo^{\mult}(P))(L(0),\bL_{T_{\cp}})$ is written as
\begin{equation*}
\bigoplus_{(i,j)\in PP(T_\cp)\backslash\{(0,0)\}}\C \cdot V_{(i,j)} \oplus \C \cdot (e^BV^1_{(0,0)}-e^AV^0_{(0,0)}) \oplus \C \cdot (e^CV^2_{(0,0)}-e^AV^0_{(0,0)}) 
\end{equation*}
where $PP(T_\cp)$ is the set of lattice points in the polytopes of the parliament depicted in Figure \ref{fig:PPTCP2}. Furthermore, the higher cohomology $H^k(\Mo^{\mult}(P))(L(0),\bL_{T_{\cp}})\ (k\neq0)$ vanishes.
\end{theorem}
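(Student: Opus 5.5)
This is a finite linear-algebra computation on the complex $\Mo^{\mult}(P)(L(0),\bL_{T_\cp})$ described just above the statement. The plan has three steps: list the generators with their degrees, pin down $\fm_1$ using the $\Z^2$-grading, and then read off kernel and cokernel.

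\emph{Generators.} From Figure \ref{fig:asympt cond} the degree-zero generators are the vertex and edge points $V_{(i_1,i_2)}$ with $(i_1,i_2)\neq(0,0)$, together with the three points $V^0_{(0,0)},V^1_{(0,0)},V^2_{(0,0)}$. These are nine generators in all. I will check that Condition \textbf{(M)} holds for each one, using the asymptotic description of each sheet of $\bL_{T_\cp}$ near $\del P$ (it looks like $L(1)$ or $L(2)$ there). For this I will argue exactly as for $U_{(i_1,i_2)}$ and $V_{(j_1,j_2)}$ in subsection \ref{morphisms in mult Mo(P)}: the difference of potentials is locally $\pm(\text{linear})$, and the stable manifold is a point. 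The only degree-one generator is $V^b_{(0,0)}$, added formally. I will also check that the labels of the $V_{(i,j)}$ with $(i,j)\neq(0,0)$ are exactly the nonzero lattice points of $PP(T_\cp)$ in Figure \ref{fig:PPTCP2}. There are six of them, namely $(\pm1,0),(0,\pm1),(1,-1),(-1,1)$, and $(0,0)$ lies in all three polytopes.

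\emph{Differential.} Since $\fm_1$ preserves the $\Z^2$-grading and raises degree by one, the only possibly nonzero matrix entries are from $V^k_{(0,0)}$ to $V^b_{(0,0)}$. By Figure \ref{fig:grad traj of tangent bundle} each $V^k_{(0,0)}$ has a unique gradient line to $V^b_{(0,0)}$. This gives $\fm_1(V^0_{(0,0)})=e^{-A}V^b_{(0,0)}$, $\fm_1(V^1_{(0,0)})=e^{-B}V^b_{(0,0)}$ and $\fm_1(V^2_{(0,0)})=e^{-C}V^b_{(0,0)}$, with signs fixed by a choice of orientations. I also need to exclude extra jagged gradient lines to $V^b_{(0,0)}$, for instance lines bouncing off the walls of $\cW_{\bL_{T_\cp}}$. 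For this I will argue as in Figure \ref{fig:no jagged gradient tree}: a trajectory starting on one sheet must end on the sheet prescribed by the grading, and these branch labels do not match. In any case, additional contributions would only change the nonzero coefficients, not the rank.

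\emph{Cohomology.} Then $\fm_1$ restricted to the span of $V^0_{(0,0)},V^1_{(0,0)},V^2_{(0,0)}$ is the rank-one map $(a,b,c)\mapsto(ae^{-A}+be^{-B}+ce^{-C})V^b_{(0,0)}$. Its kernel is spanned by $e^BV^1_{(0,0)}-e^AV^0_{(0,0)}$ and $e^CV^2_{(0,0)}-e^AV^0_{(0,0)}$. Because the map is surjective onto $\C\cdot V^b_{(0,0)}$, we get $H^1=0$. All other $V_{(i,j)}$ are cocycles that are not coboundaries, and there are no generators in other degrees, so $H^k=0$ for $k\neq0$. The total dimension of $H^0$ is $6+2=8$, matching $\dim\Gamma(\cp,T_\cp)$.

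The main obstacle is justifying the formal generator $V^b_{(0,0)}$ and the exact shape of $\fm_1$ at the branch point, where the potential is not Morse. I would use the local model of Appendix \ref{local model} to see that the three $(0,0)$-sheets' flows each enter the ramification point along exactly one trajectory. With the degree of $V^b_{(0,0)}$ set to $1$ as in the text, the rank computation then goes through.
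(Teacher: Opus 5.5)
Your proposal is correct and is essentially the paper's argument: the same nine degree-zero generators plus the formal degree-one $V^b_{(0,0)}$, the $\Z^2$-grading forcing $\fm_1$ to be the rank-one map sending $V^0_{(0,0)},V^1_{(0,0)},V^2_{(0,0)}$ to $e^{-A},e^{-B},e^{-C}$ times $V^b_{(0,0)}$, and then reading off the two-dimensional kernel and the zero cokernel. The only soft spot is your treatment of jagged lines, which the paper does not address at all: the branch-label mismatch argument of Figure~\ref{fig:no jagged gradient tree} does not transfer directly because $V^b_{(0,0)}$ lies on both sheets, and your fallback that extra contributions ``only change the coefficients'' is not enough for the theorem as stated, since the explicit basis $e^BV^1_{(0,0)}-e^AV^0_{(0,0)}$, $e^CV^2_{(0,0)}-e^AV^0_{(0,0)}$ depends on those exact coefficients.
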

\begin{corollary}\label{cohomology of tangent}
We have an isomorphism
\begin{equation*}
H^k(\Mo^{\mult}(P))(L(0),\bL_{T_{\cp}}) \cong H^k(\DG^\vect(\cp))(\cO,T_{\cp}).
\end{equation*}
\end{corollary}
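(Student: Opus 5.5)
This is a dimension count in each degree, carried out on the two sides separately and then matched. An abstract isomorphism of graded vector spaces over $\C$ only needs equal dimensions in every degree. On the complex side, the Euler sequence computation recorded just before the statement gives
\[
H^0(\DG^\vect(\cp))(\cO,T_\cp)\cong\Gamma(\cp,T_\cp),\qquad \dim=8,
\]
and $H^k(\DG^\vect(\cp))(\cO,T_\cp)=0$ for $k\neq0$. This vanishing comes from the long exact cohomology sequence, since $H^{\geq1}(\cp,\cO(1))=0$ and $H^{\geq1}(\cp,\cO)=0$. So it remains to show, on the mirror side, that $H^0(\Mo^\mult(P))(L(0),\bL_{T_\cp})$ has dimension $8$ and that all other cohomology vanishes. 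This is exactly what the preceding theorem asserts, and I would read it off as follows.

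First I list the generators of $\Mo^\mult(P)(L(0),\bL_{T_\cp})$ from Figure \ref{fig:asympt cond}:
\begin{itemize}
\item six degree-zero corner generators $V_{(\pm1,0)},V_{(0,\pm1)},V_{(1,-1)},V_{(-1,1)}$;
\item three degree-zero generators $V^0_{(0,0)},V^1_{(0,0)},V^2_{(0,0)}$;
\item one formally added generator $V^b_{(0,0)}$ of degree $1$.
\end{itemize}
This gives a two-term complex $C^0\to C^1$ with $\dim C^0=9$ and $\dim C^1=1$. The differential preserves the $\Z^2$-grading, so the six corner generators, whose labels are not $(0,0)$, map to zero. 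The only possible target in degree one is $V^b_{(0,0)}$, whose label is $(0,0)$. By the computation above,
\[
\fm_1(V^0_{(0,0)})=e^{-A}V^b_{(0,0)},\qquad \fm_1(V^1_{(0,0)})=e^{-B}V^b_{(0,0)},\qquad \fm_1(V^2_{(0,0)})=e^{-C}V^b_{(0,0)},
\]
so $\fm_1\colon C^0\to C^1$ is surjective.

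From this, $H^1=0$ and $H^0=\ker\fm_1$ has dimension $9-1=8$. The kernel is spanned by the six corner generators together with the two differences $e^BV^1_{(0,0)}-e^AV^0_{(0,0)}$ and $e^CV^2_{(0,0)}-e^AV^0_{(0,0)}$, which recovers the preceding theorem. All other degrees vanish because there are no generators there. Matching dimensions degree by degree then gives the isomorphism.

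To make the isomorphism more than abstract, I would send each $V_{(i,j)}$ to the global section in \eqref{globalsectionPP} attached to the corresponding lattice point of the parliament. The three $(0,0)$ points satisfy one linear relation, and the kernel of $\fm_1$ on $\mathrm{span}\{V^0_{(0,0)},V^1_{(0,0)},V^2_{(0,0)}\}$ mirrors that relation. This refinement is optional; the dimension count already suffices.

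The main obstacle is justifying that the formal generator $V^b_{(0,0)}$, sitting at the ramification locus, should carry degree $1$, and that exactly one gradient line runs from each $V^k_{(0,0)}$ to it with nonzero weight. For the corollary I would take both as given by the preceding discussion and theorem, and remark that the degree assignment is consistent with the Maslov index $1$ noted in the footnote.
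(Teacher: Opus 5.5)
Your proof is correct, but it argues differently from the paper. The paper does not count dimensions. Instead it writes down the explicit correspondence \eqref{Vtochi}, which matches each corner generator $V_{(i,j)}$ with the parliament section carrying the character $\chi^{-(i,j)}$ in \eqref{globalsectionPP}. It also matches the rescaled cochains $-e^{A}V^0_{(0,0)}$, $e^{B}V^1_{(0,0)}$, $e^{C}V^2_{(0,0)}$ with $(1,1)\otimes\chi^{-(0,0)}$, $(-1,0)\otimes\chi^{-(0,0)}$, $(0,-1)\otimes\chi^{-(0,0)}$. Under this matching the two kernel vectors $e^BV^1_{(0,0)}-e^AV^0_{(0,0)}$ and $e^CV^2_{(0,0)}-e^AV^0_{(0,0)}$ go to the independent sections $(0,1)\otimes\chi^{-(0,0)}$ and $(1,0)\otimes\chi^{-(0,0)}$.

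Your route uses the Euler sequence on one side and the two-term complex $\C^9\to\C^1$ with surjective $\fm_1$ on the other. It is shorter, needs no normalization choices, and is fully sufficient for the statement as written, which asserts only an abstract isomorphism. The paper's route buys compatibility of the isomorphism with the $\Z^2$-labels, i.e.\ the identification of mirror generators with lattice points of the parliament, which is the real content of this section.

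You can recover that refinement cheaply by counting weight by weight. Each label $(i,j)\neq(0,0)$ contributes one dimension on each side, and the label $(0,0)$ contributes two on each side. Note, however, that at weight $(0,0)$ the two sides have opposite shapes: on the mirror side the space is the kernel of a functional on $\C^3$, while on the complex side it is $\C^3$ modulo one relation. So the phrase ``the kernel mirrors the relation'' is not by itself a matching. An explicit map must be chosen so that its kernel meets $\ker\fm_1$ trivially. The paper's normalization guarantees this, since its kernel is spanned by $-e^AV^0_{(0,0)}+e^BV^1_{(0,0)}+e^CV^2_{(0,0)}$, which has $\fm_1$-image $V^b_{(0,0)}\neq 0$.
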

\begin{proof}
The correspondence between the generators of these cohomologies is given as follows:
\begin{align}\label{Vtochi}
V_{(-1,0)} &\leftrightarrow (-1,0)\otimes\chi^{-(-1,0)},& V_{(0,-1)} &\leftrightarrow  (0,-1)\otimes\chi^{-(0,-1)}, \notag\\
V_{(1,0)} &\leftrightarrow  (1,1)\otimes\chi^{-(1,0)},& V_{(1,-1)} &\leftrightarrow  (0,-1)\otimes\chi^{-(1,-1)},\\\notag
V_{(0,-1)} &\leftrightarrow  (0,-1)\otimes\chi^{-(0,-1)}, & V_{(-1,1)} &\leftrightarrow (-1,0)\otimes\chi^{-(-1,1)},\\
-e^{A}V^0_{(0,0)} &\leftrightarrow  (1,1)\otimes\chi^{-(0,0)}, & e^{B}V^1_{(0,0)} &\leftrightarrow  (-1,0)\otimes\chi^{-(0,0)}, & e^{C}V^2_{(0,0)} &\leftrightarrow  (0,-1)\otimes\chi^{-(0,0)}. \notag
\end{align}
This correspondence turns out to be an isomorphism.
\end{proof}
For any toric vector bundle, we expect that we can get an explicit correspondence between the generators of the space of morphisms such as the correspondence (\ref{Vtochi}).

\subsection{The morphisms from the structure sheaf to the cotangent bundle}
Next, we consider the holomorphic cotangent bundle $\Omega_{\cp}$. Then, we have
\begin{equation*}
\begin{split}
H^k(\DG^\vect(\cp))(\cO,\Omega_{\cp}) &\cong \begin{cases}H^{1,1}(\cp) & (k=1)\\ 0 & (\text{otherwise}) \end{cases}\\
\dim H^{1,1}(\cp) &= 1.
\end{split}
\end{equation*}
In particular, $H^{1,1}(\cp)$ is spanned by the Fubini-Study form $\omega_{FS}$.\par
On the other hand, the tropical Lagrangian multi-section $\bL_{\Omega_\cp}$ associated to the holomorphic cotangent bundle $\Omega_\cp$ is given by Figure \ref{fig:T*cp2}.
\begin{figure}[h]
		\centering
\begin{tikzpicture}[scale=0.8]
\draw(3,0) node[right][black]{$\ \ \ \xrightarrow{\ \ \ p\ \ \ }$}  --  (0,0);
\draw(0,0) --  (-1.5,-2.6) ;
\draw(-1.5,2.6)  --  (0,0);
\draw(0,0) --  (-3,0);
\draw(1.5,2.6)  --  (0,0);
\draw(0,0) --  (1.5,-2.6);
\draw(0.1,0.9) node{$\sigma_0^{(1)}$};
\draw(-0.8,0.4) node{$\sigma_1^{(1)}$};
\draw(-0.8,-0.4) node{$\sigma_2^{(1)}$};
\draw(0,-0.9) node{$\sigma_0^{(2)}$};
\draw(0.8,-0.4) node{$\sigma_1^{(2)}$};
\draw(0.8,0.4) node{$\sigma_2^{(2)}$};
\draw(0,2) node{$\xi_1$};
\draw(-2,1) node{$-\xi_1$};
\draw(-2,-1) node{$-\xi_2$};
\draw(0,-2) node{$\xi_2$};
\draw(2,-1) node{$-\xi_1+\xi_2$};
\draw(2,1) node{$\xi_1-\xi_2$};
\end{tikzpicture}
\begin{tikzpicture}[scale=0.9]
\draw(0,0) --  (-2,0) node[left]{$\rho_1$};
\draw(0,0) --  (0,-2) node[below]{$\rho_2$};
\draw(0,0) --  (1.4,1.4) node[above right]{$\rho_0$};
\draw(-1,-1) node{$\sigma_{0}$};
\draw(1,-1) node{$\sigma_{1}$};
\draw(-1,1) node{$\sigma_{2}$};
\end{tikzpicture}
	    \caption{The tropical Lagrangian multi-section $\bL_{\Omega_{\cp}}^{trop}$ associated to holomorphic cotangent bundle of $\cp$.}
		\label{fig:T*cp2}
\end{figure}
We have $\bL_{\Omega_\cp}=-\bL_{T_\cp}$ by the duality of the tropical Lagrangian multi-sections associated to the toric vector bundles. By taking the potential function $-f_{L_2}$ of the local model instead of $f_{L_2}$, we can construct the Lagrangian multi-section $\bL_{\Omega_\cp}$ such as $\bL_{\Omega_\cp}=-\bL_{T_\cp}$. Hence, the intersections $L(0)\cap\bL_{\Omega_\cp}$ coincide with $L(0)\cap\bL_{T_\cp}$. Figure \ref{fig:asympt cond*} shows the projection of the intersections to the moment polytope.
\begin{figure}[h]
		\centering
\begin{tikzpicture}[scale=1.2]
\draw[dashed](-4,0)  --  (-1,0) -- (-4,3) --cycle;
\draw[dashed](5,0)  --  (2,0) -- (2,3)  --cycle;
\draw[dashed,orange] (-4,1.25) to[out=10,in=160] (-3,1);
\draw[dashed,orange] (2,1.25) to[out=10,in=160] (3,1);
\fill[black](-4,0)circle(0.06);
\fill[black](-1,0)circle(0.06);
\fill[black](2,0)circle(0.06);
\fill[black](5,0)circle(0.06);
\fill[black](-4,3)circle(0.06);
\fill[black](2,3)circle(0.06);
\fill[black](-2.5,0)circle(0.06);
\fill[black](-4,1.5)circle(0.06);
\fill[black](3.5,1.5)circle(0.06);
\fill[black](-3,1)circle(0.06);
\fill[black](3,1)circle(0.06);
\draw(-4,0) node[left]{$U_{(1,0)}$};
\draw (2,0) node[left]{$U_{(0,1)}$};
\draw(5,0) node[right]{$U_{(-1,1)}$};
\draw(-1,0) node[right]{$U_{(-1,0)}$};
\draw (-4,3) node[left]{$U_{(0,-1)}$};
\draw (2,3) node[left]{$U_{(1,-1)}$};
\draw(-2.5,0) node[above]{$U^2_{(0,0)}$};
\draw(-4,1.5) node[left]{$U^1_{(0,0)}$};
\draw(3.5,1.5) node[above right]{$U^0_{(0,0)}$};
\draw(3,1) node[below right]{$U_{(0,0)}^b$};
\draw (-2.5,-0.6) node{$\pi'(L(0)\cap\bL_{\Omega_\cp}^{(1)})$};
\draw (3.5,-0.6) node{$\pi'(L(0)\cap\bL_{\Omega_\cp}^{(2)})$};
\end{tikzpicture}
	    \caption{The projection of the intersection $L(0)\cap\bL_{\Omega_\cp}$ to the moment polytope.}
		\label{fig:asympt cond*}
\end{figure}
Note that $U_{(i_1,i_2)}=V_{(-i_1,-i_2)}$ since $\bL_{\Omega_\cp}=-\bL_{T_\cp}$. Also, the gradient vector field associated to $U_{(i_1,i_2)}$ is the reverse of that associated to $V_{(i_1,i_2)}$. Hence, each point $U_{(i_1,i_2)}$ does not form a generator of $Mo^{\mult}(P)(L(0),\bL_{\Omega_\cp})$ since $U_{(i_1,i_2)}$ does not satisfy Condition {\bf (M)}. The branch point $U^b_{(0,0)}$ is the only generator of the space $\Mo^{\mult}(P)(L(0),\bL_{\Omega_\cp})$ and its degree is equal to $1$. Thus, we obtain the following.
\begin{theorem}\label{cohomology of cotangent}
\begin{equation*}
\begin{split}
H^k(\Mo^{\mult}(P))(L(0),\bL_{\Omega_{\cp}}) &\cong \begin{cases}\C \cdot U^b_{(0,0)} & (k=1)\\0 & (\text{{\rm otherwise}})\end{cases}\\
H^k(\Mo^{\mult}(P))(L(0),\bL_{\Omega_{\cp}}) &\cong H^k(\DG^\vect(\cp))(\cO,\Omega_{\cp}).
\end{split}
\end{equation*}
\end{theorem}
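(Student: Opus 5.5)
The plan is to compute both sides directly and compare their dimensions in each degree. We already know $H^k(\DG^\vect(\cp))(\cO,\Omega_\cp)$ is one-dimensional for $k=1$, spanned by $[\omega_{FS}]$, and zero otherwise. So the whole task is to show that the complex $\Mo^{\mult}(P)(L(0),\bL_{\Omega_\cp})$ is one-dimensional, concentrated in degree $1$, with trivial differential.

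First I will identify the generators. Take $\bL_{\Omega_\cp}=-\bL_{T_\cp}$, built from the local model with potential $-f_{L_2}$. Then the sheetwise potentials of $\bL_{\Omega_\cp}$ are the negatives of those of $\bL_{T_\cp}$, so the intersections with $L(0)$ are the points $U_{(i_1,i_2)}=V_{(-i_1,-i_2)}$ together with the branch point $U^b_{(0,0)}$. The gradient field of $f^{(\alpha)}_{\bL_{\Omega_\cp}}-(f_{L(0)}+i_1\xi_1+i_2\xi_2)$ is minus the corresponding field for $V_{(-i_1,-i_2)}$. Hence the stable manifold at $U_{(i_1,i_2)}$ is the unstable manifold at $V_{(-i_1,-i_2)}$. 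Every $U_{(i_1,i_2)}$ lies on $\del P$, and each sheet near $\del P$ behaves asymptotically like $L(-1)$ or $L(-2)$. Using the explicit vertex and edge expressions, as in the analysis of \eqref{FS chern expr}, I would check that these stable manifolds are either the single point or point out of $P$ along an edge. In either case $U_{(i_1,i_2)}$ is not an interior point of $S_v\cap P$, so condition \textbf{(M)} fails.

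This applies in particular to $U^0_{(0,0)},U^1_{(0,0)},U^2_{(0,0)}$. The vertex points $U_{(\pm1,0)}$ and the others are handled like the points $u^k$ in Figure \ref{fig:1toT}. This verification of \textbf{(M)} at each boundary point is the main obstacle. It needs the precise asymptotic form of the two sheets near each edge and vertex, including the symmetric perturbation of Figure \ref{fig:pLag} (negated), and I would do it edge by edge, using the $\Z/3$ symmetry of $P$ to cut the work down.

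Next, following the convention used for $V^b_{(0,0)}$, I add $U^b_{(0,0)}$ formally as a generator of degree $1$, matching the Maslov index of the intersection at the ramification locus. The complex is then $\C\cdot U^b_{(0,0)}$, so $\fm_1=0$ because there is only one generator. Consequently $H^1\cong\C\cdot U^b_{(0,0)}$ and all other $H^k$ vanish. Sending $U^b_{(0,0)}\mapsto[\omega_{FS}]$, or any nonzero multiple, gives the claimed isomorphism degreewise.

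As a consistency check, I would note that this is compatible with Serre-type duality between the $T_\cp$ and $\Omega_\cp$ computations. There, the degree-$0$ generators $V^k_{(0,0)}$ mapped by $\fm_1$ onto $V^b_{(0,0)}$, which killed $V^b_{(0,0)}$ in cohomology. Here the reversal of the flows removes those degree-$0$ partners, leaving $U^b_{(0,0)}$ as a surviving class.
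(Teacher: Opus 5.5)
Your route is the paper's. You realize $\bL_{\Omega_\cp}=-\bL_{T_\cp}$, identify $U_{(i_1,i_2)}=V_{(-i_1,-i_2)}$ with reversed gradient fields, and discard every $U_{(i_1,i_2)}$ by Condition \textbf{(M)}. That leaves only the formally added degree-one generator $U^b_{(0,0)}$, so $\fm_1=0$ and the comparison with $H^1(\cp,\Omega_\cp)\cong\C\cdot[\omega_{FS}]$ is immediate.

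The step that is wrong as written is your criterion for the failure of \textbf{(M)}. You say each stable manifold $S_U$ is either the single point $\{U\}$ or leaves $P$ along an edge, and that in either case $U$ is not interior to $S_U\cap P$. But if $S_U=\{U\}$, then $S_U\cap P=S_U$ and $U$ is trivially an interior point, so \textbf{(M)} \emph{holds}. This is exactly how degree-zero boundary generators such as $W_{(0,0)}$ and the $V_{(i_1,i_2)}$ arise. So the first branch of your dichotomy would produce generators rather than kill them. The analogy with the points $u^k_{(i_1,i_2)}$ in the $L(1)\to\bL_{T_\cp}$ computation is also not the relevant picture: those fail \textbf{(M)} because their stable manifolds run along an edge, which is not what happens here.

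The correct and shorter reason uses your own observation that $S_{U_{(i_1,i_2)}}$ is the unstable manifold of $V_{(-i_1,-i_2)}$, together with the $T_\cp$ computation. Every $V_{(-i_1,-i_2)}$ is a degree-zero generator, i.e.\ $\dim S_V=0$. So $V$ is a source of the extended gradient field and its unstable manifold is two-dimensional. Hence each $U$ is a sink, and $S_U$ is an open neighborhood of $U$ in the plane. Since $U\in\del P$, $S_U\cap P$ is a half-disc or a corner sector with $U$ on its boundary, so \textbf{(M)} fails for every $U$.

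This is the content of the paper's one-line argument that the gradient field at $U$ is the reverse of the one at $V$. It makes the edge-by-edge asymptotic check you flagged as the main obstacle unnecessary. With this repair, the rest of your proposal goes through as in the paper.
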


\section{Examples of non-trivial jagged gradient trees}\label{Example of nontrivial jagged gradient tree in CP2}
As mentioned in Remark \ref{necessity of jagged lines}, the jagged gradient lines or trees did not appear in the full subcategory $\Mo^\mult_\cE(P) \subset \Mo^\mult(P)$. However, considering mirror symmetry, jagged gradient lines and trees are necessary in $Mo^\mult(P)$. In this section, we give an example of jagged gradient trees and compare the composition of morphisms associated to the tree with the composition of mirror morphisms.\par
We consider the indecomposable rank $2$ toric vector bundles on $\cp$ by the following exact sequence
\begin{equation*}
0 \longrightarrow \cO \longrightarrow \cO(aD_{0})\oplus\cO(bD_{1})\oplus\cO(cD_{2}) \longrightarrow E_{a,b,c} \longrightarrow 0
\end{equation*}
for $a,b,c \in \Z_{>0}$. The first map is given by $1\mapsto(z_0^a,z_1^b,z_2^c)$ where $[z_0:z_1:z_2]$ are the homogeneous coordinates of $\cp$ and $D_{i} = \{z_i=0\}$. For simplicity, we consider equivariant homomorphisms and compute the following composition
\begin{equation*}
\cO(D_{1}-2D_{2}) \overset{\alpha}\longrightarrow E_{1,2,2} \overset{\beta}\longrightarrow \cO(D_0 + 3D_{1}),
\end{equation*}
where $\alpha:\cO(D_{1}-2D_{2}) \to E_{1,2,2}$ and $\beta:E_{1,2,2} \to \cO(D_0 + 3D_{1})$ are given by
\begin{equation*}
\alpha = \begin{pmatrix} z_1z_2^2 \\ 0 \end{pmatrix},\ \ \ \beta = \begin{pmatrix} z_0z_1 & 0 \end{pmatrix}
\end{equation*}
By this expression, we have the non-trivial composition $\beta \circ \alpha = z_0z_1^2z_2^2 \neq0$.\par
On the other hand, we consider the mirror Lagrangians and morphisms. When we consider the equivariant homological mirror symmetry, we consider the Lagrangians in $T^*\!N_\R$ without taking the quotient by the lattice $M$ (see \cite{FLTZ12,Kuw17}). The mirror Lagrangians $L_{\cO(aD_{0} + bD_{1} + cD_{2})}$ of $\cO(L_{\cO(aD_{0} + bD_{1} + cD_{2})})$ are given by
\begin{equation*}
L_{\cO(aD_{0} + bD_{1} + cD_{2})}:\ 
\begin{pmatrix}
y^1\\y^2
\end{pmatrix}
=
(a+b+c)\begin{pmatrix}
\frac{e^{2\xi_1}}{1+e^{2\xi_1}+e^{2\xi_2}} \\ \frac{e^{2\xi_2}}{1+e^{2\xi_1}+e^{2\xi_2}}
\end{pmatrix} - \begin{pmatrix}
b\\c
\end{pmatrix}
=
(a+b+c)\begin{pmatrix}
x^1\\x^2
\end{pmatrix} - \begin{pmatrix}
b\\c
\end{pmatrix}.
\end{equation*}
The mirror Lagrangians $\bL_{a,b,c}$ of $E_{a,b,c}$ are constructed in \cite{OS24}. The fiber coordinate $(y^1,y^2)$ of $\bL_{1,2,2}$ at each vertex of the moment polytope is depicted in Figure \ref{fig:fib coord of L122}.
\begin{figure}[h]
		\centering
\begin{tikzpicture}[scale=1.3]
\draw(-4,0)  --node[below]{$L(3)$}  (-1,0) -- node[above right]{$L(1)$} (-4,3) -- node[above left]{$L(3)$} node[below left]{$L(2)$}cycle;
\draw(4,0)  -- node[below]{$L(2)$} (1,0) -- node[above left]{$L(2)$} node[below left]{$L(3)$} (1,3)  --  node[above right]{$L(4)$}cycle;
\draw[dashed,orange] (-4,1.5) to[out=350,in=120] (-3,1);
\draw[dashed,orange] (1,1.5) to[out=350,in=120] (2,1);
\fill[black](-4,0)circle(0.06);
\fill[black](-1,0)circle(0.06);
\fill[black](-4,3)circle(0.06);
\fill[black](1,0)circle(0.06);
\fill[black](4,0)circle(0.06);
\fill[black](1,3)circle(0.06);
\draw(-4,0) node[left]{$\begin{pmatrix}-2\\0\end{pmatrix}$};
\draw(-1,0) node[right]{$\begin{pmatrix}1\\0\end{pmatrix}$};
\draw(-4,3) node[left]{$\begin{pmatrix}0\\1\end{pmatrix}$};
\draw(1,0) node[left]{$\begin{pmatrix}0\\-2\end{pmatrix}$};
\draw(4,0) node[right]{$\begin{pmatrix}2\\-2\end{pmatrix}$};
\draw(1,3) node[left]{$\begin{pmatrix}-2\\2\end{pmatrix}$};
\end{tikzpicture}
	    \caption{The fiber coordinate of $\bL_{1,2,2}$. The left side is the first sheet $\bL_{1,2,2}^{(1)}$ and the right side is the second sheet $\bL_{1,2,2}^{(2)}$ of $\bL_{1,2,2}$. In a neighborhood of each edge of the moment polytope, $\bL_{1,2,2}$ behaves like $L(k)$.}
		\label{fig:fib coord of L122}
\end{figure}
Then, the intersections $L_{\cO(D_{1}-2D_{2})} \cap \bL_{1,2,2}$, $\bL_{1,2,2} \cap L_{\cO(D_{0}+3D_{1})}$, and $L_{\cO(D_{1}-2D_{2})} \cap L_{\cO(D_{0}+3D_{1})}$ are all transverse. 
The projections $U=\pi(L_{\cO(D_{1}-2D_{2}} \cap \bL_{1,2,2}^{(2)})$, $V=\pi(\bL_{1,2,2}^{(1)} \cap L_{\cO(D_{0}+3D_{1})})$, and $W=\pi(L_{\cO(D_{1}-2D_{2})} \cap L_{\cO(D_{0}+3D_{1})})$ are given by
\begin{align*}
U &= \left(\frac{1}{8}, \dfrac{7}{8}\right), & V &= \left(1,0\right),  & W &= \left(\frac{2}{5}, \dfrac{2}{5}\right)
\end{align*}
In particular, these points turn out to be the morphisms of degree $0$ in $\Mo^{\mult}(P)$. For these morphisms, there is no normal gradient tree, but there is one non-trivial jagged gradient tree. Figure \ref{fig:non-trivial jagged gradient tree in P} shows the non-trivial jagged gradient tree $\gamma$. 
\begin{figure}[h]
		\centering
\begin{tikzpicture}[scale=2]
\draw(4,0)  --  (1,0) --  (1,3)  -- cycle;
\draw[dashed,orange] (1,1.5) to[out=340,in=120] (2,1);
\fill[black](2.15,1.15)circle(0.04);;
\draw(2.3,1.3) node{$W$};
\fill[black](4,0)circle(0.04) node[right]{$V$};
\fill[black](1.5,2.5)circle(0.04) node[above right]{$U$};
\fill[black](2,1)circle(0.03);
\draw[-stealth,postaction=decorate,
    decoration={markings,
    mark = between positions 0.2 and 1 step 0.3
    with {\arrow{stealth}}}](2,1)to[out=240,in=140](2.5,0) node[below]{$(21)$-wall};
\draw[-stealth,postaction=decorate,
    decoration={markings,
    mark = between positions 0.2 and 1 step 0.3
    with {\arrow{stealth}}}](2,1)to[out=100,in=340](1,1.75) node[left]{$(21)$-wall};
\draw[-stealth,postaction=decorate,
    decoration={markings,
    mark = between positions 0.2 and 1 step 0.3
    with {\arrow{stealth}}}](2,1)to[out=340,in=280](2.5,1.5) node[above right]{$(12)$-wall};
\draw[blue, thick,-stealth,postaction=decorate,
    decoration={markings,    
    mark = between positions 0.2 and 1 step 0.4
    with {\arrow{stealth}}}](1.5,2.5)to[out=280,in=130](2.15,1.15);
\fill[red](2.3,1.05)circle(0.03);
\draw[red, thick,-stealth,postaction=decorate,
    decoration={markings,
    mark = between positions 0.2 and 1 step 0.4
    with {\arrow{stealth}}}](4,0)to[out=160,in=310](2.3,1.05);
\draw[red, thick,-stealth,postaction=decorate,
    decoration={markings,
    mark = between positions 1 and 1 step 1
    with {\arrow{stealth}}}](2.3,1.05)to[out=160,in=320](2.15,1.15);
\end{tikzpicture}
	    \caption{The non-trivial jagged gradient tree $\gamma$. The blue line is the ordinary gradient line of $\grad(f^{(2)}_{\bL_{1,2,2}} - f_{L_{\cO(D_1-2D_2)}})$. The red line is the jagged gradient line. It first moves along $\grad(f_{L_{\cO(D_0 + 3D_1)}} - f^{(1)}_{\bL_{1,2,2}})$, and when it hits $(12)$-wall, it moves along $\grad(f_{L_{\cO(D_0 + 3D_1)}} - f^{(2)}_{\bL_{1,2,2}})$.}
		\label{fig:non-trivial jagged gradient tree in P}
\end{figure}
Thus, the product $\fm_2(U,V) = \pm e^{-A(\gamma)}W \neq 0$ is also non-trivial in $\Mo^{\mult}(P)$. This is the mirror description of the composition $\beta \circ \alpha \neq 0$ in $\DG^\vect(\cp)$. Under these observations, we expect that $\Mo^\mult(P)$ and $\DG^\vect(\cp)$ are $A_\infty$-equivalent.

\appendix
\section{Local model of Lagrangian multi-sections}\label{local model}
In this appendix, we review the local model of a Lagrangian multi-section around the simple branch point. For more details see \cite{Fuk05,Sue21,Sue24}.\par
Let $B:=N_\R=\R^2\cong\C$ and $Y:=T^*\!N_\R\cong\C^2$. We denote by $(\xi_1,\xi_2)$ the real coordinates of $B$ and $(\xi_1,\xi_2,y^1,y^2)$ the real coordinates of $Y$. The standard symplectic form of $Y$ is given by $\omega=d\xi_1\wedge dy^1+d\xi_2\wedge dy^2$. Let $\xi:=\xi_1+\rt \xi_2$, $\eta:=y^1+\rt y^2$ be the complex coordinates of $Y$. 
The Lagrangian submanifold $L_2$ is defined by
\begin{equation*}
L_2:=\{(\xi,\eta)\in\C^2\ |\ \xi\in B,\ \bar{\xi}=\eta^2 \}.
\end{equation*}
Indeed, we have 
\begin{equation*}
\omega|_{L_2}=(2y^1dy^1-2y^2dy^2)\wedge dy^1 + (-2y^2dy^1-2y^1dy^2)\wedge dy^2=0
\end{equation*}
since $\xi_1=(y^1)^2-(y^2)^2$ and $ \xi_2=-2y^1y^2$. If we use the polar coordinate $\xi=re^{\rt\theta}$, then the Lagrangian $L_2$ is expressed as
\begin{equation*}
L_2=\left\{\left(re^{\rt\theta},\sqrt{r}e^{-\rt\frac{\theta}{2}}\right)\in\C^2\ \middle|\ r\in\R_{\geq0},\ \theta\in\R\right\}.
\end{equation*}\par
On the other hand, we consider the following function $f_{L_2}$ on $B$ and the graph of $df_{L_2}$
\begin{equation*}
\mathrm{graph}(df_{L_2})=\left\{ (\xi,df_{L_2}(\xi))\in\C^2\ \middle|\ \xi=re^{\rt\theta}\in\C,\ f_{L_2}(r,\theta)=\frac{2}{3}r^{\frac{3}{2}}\cos\frac{3}{2}\theta \right\}.
\end{equation*}
By the polar coordinate $\xi=re^{\rt\theta}=\xi_1+\rt \xi_2$, we have
\begin{equation*}
d\xi_1=\cos\theta dr -r\sin\theta d\theta,\ \ \ d\xi_2=\sin\theta dr +r\cos\theta d\theta,
\end{equation*}
and
\begin{equation*}
df_{L_2} = r^{\frac{1}{2}}\cos\frac{3\theta}{2} dr -r^{\frac{3}{2}}\sin\frac{3\theta}{2} d\theta
=r^{\frac{1}{2}}\cos\frac{\theta}{2}d\xi_1-r^{\frac{1}{2}}\sin\frac{\theta}{2}d\xi_2.
\end{equation*}
Thus, the fiber coordinates are given by $(y^1,y^2) = (r^{\frac{1}{2}}\cos\frac{\theta}{2},-r^{\frac{1}{2}}\sin\frac{\theta}{2})$ and we have
\begin{equation*}
\eta^2=(y^1+\rt y^2)^2= \left(r^{\frac{1}{2}}\cos\frac{\theta}{2}-\rt r^{\frac{1}{2}}\sin\frac{\theta}{2}\right)^2
=\left(r^{\frac{1}{2}}e^{-\rt\frac{\theta}{2}}\right)^2=re^{-\rt\theta}=\bar{\xi}.
\end{equation*}
Namely, we obtain
\begin{equation*}
\mathrm{graph}(df_{L_2})=L_2.
\end{equation*}\par
Next, we consider the gradient vector field of the function $f_{L_2}$.
We recall that the flat metric with respect to $(\xi_1,\xi_2)$ is given by $\begin{pmatrix}1&0\\0&1\end{pmatrix}$ and with respect to $(r,\theta)$ is given by $\begin{pmatrix}1&0\\0&r^2\end{pmatrix}$. Hence, we get
\begin{equation*}
\grad(f_{L_2}) = r^{\frac{1}{2}}\cos\frac{3\theta}{2}\frac{\del}{\del r} - r^{-\frac{1}{2}}\sin\frac{3\theta}{2}\frac{\del}{\del \theta}
\ \ \ \Big(= r^{\frac{1}{2}}\cos\frac{\theta}{2}\frac{\del}{\del \xi_1} - r^{\frac{1}{2}}\sin\frac{\theta}{2}\frac{\del}{\del \xi_2}\Big).
\end{equation*}
By this expression, we obtain three gradient lines which are incoming to the origin or outgoing from the origin. The direction of these gradient lines depends on the branch of $f_{L_2}$. Figure \ref{fig line} shows the gradient vector fields corresponding to each branch.\par
\begin{figure}[h]
			\centering
			\includegraphics[width=75mm]{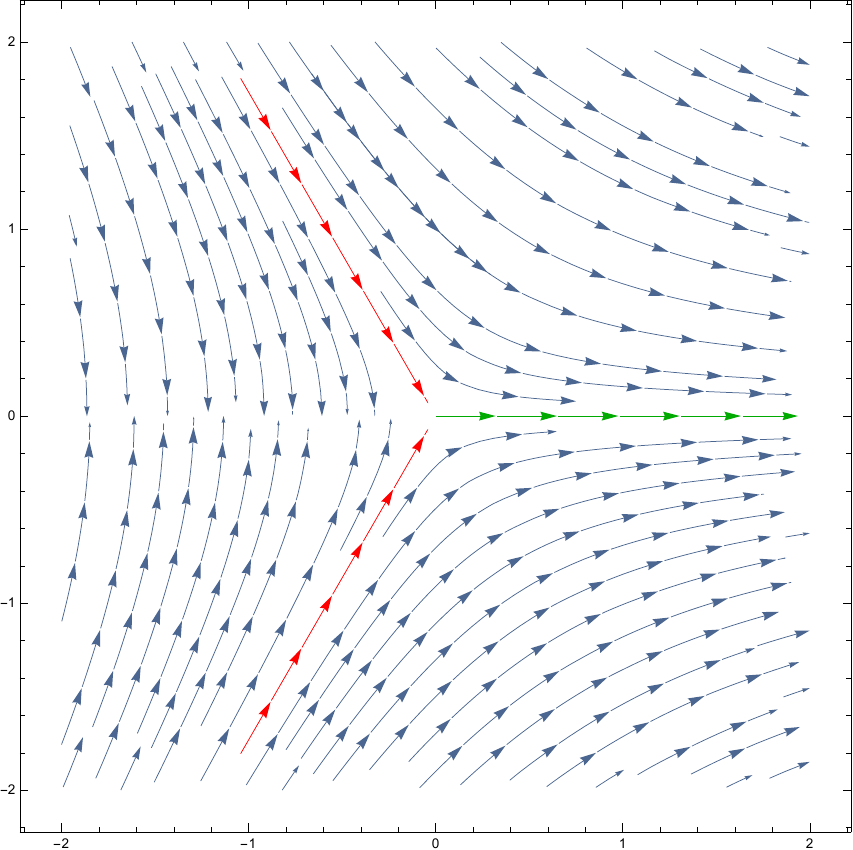}
			\includegraphics[width=75mm]{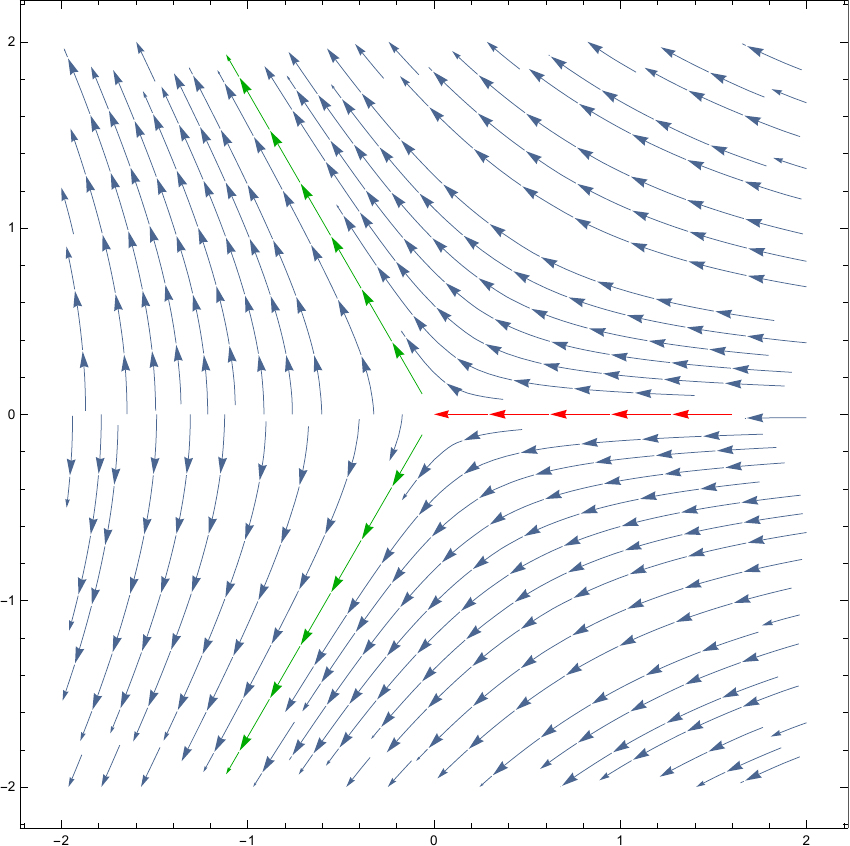}
			\caption{The gradient vector field of $f_{L_2}$ in $B$. The one on the left is for $-\pi<\theta<\pi$, and the one on the right is for $\pi<\theta<3\pi$.}
			\label{fig line}
\end{figure}
Finally, we consider the case of the Fubini-Study metric and the behavior of the gradient vector field on $\Int(P)$. Namely, the metric on $N_\R$ is given by
\begin{equation*}
\begin{pmatrix}
g^{11} & g^{12}\\
g^{21} & g^{22}
\end{pmatrix}
=
\begin{pmatrix}
\dfrac{2e^{2\xi_1}(1+e^{2\xi_2})}{(1+e^{2\xi_1}+e^{2\xi_2})^2} & \dfrac{- 2e^{2\xi_1}e^{2\xi_2}}{(1+e^{2\xi_1}+e^{2\xi_2})^2} \\
\dfrac{-2e^{2\xi_1}e^{2\xi_2}}{(1+e^{2\xi_1}+e^{2\xi_2})^2} & \dfrac{2e^{2\xi_2}(1+e^{2\xi_1})}{(1+e^{2\xi_1}+e^{2\xi_2})^2}
\end{pmatrix}.
\end{equation*}
We use the Legendre transformation $(\xi_1,\xi_2)\leftrightarrow(x^1,x^2)$ given in subsection \ref{SYZ construction}. Under the Legendre transformation, the metric on $\Int(P)$ is given by the inverse matrix of $\{g^{ij}\}$.
If we consider the function $f_{L_2}$ as the function of variables $(x^1,x^2)=\left(\frac{e^{2 \xi_1}}{1+e^{2 \xi_1}+e^{2 \xi_2}}, \frac{e^{2 \xi_2}}{1+e^{2 \xi_1}+e^{2 \xi_2}}\right)$, then the gradient vector field on $\Int(P)$ is given by
\begin{equation*}
\grad(f_{L_2}) = \sum_{i,j=1}^2\frac{\del f_{L_2}}{\del x^i}g^{ij}\frac{\del}{\del x^j} = \frac{\del f_{L_2}}{\del \xi_1}\frac{\del}{\del x^1} + \frac{\del f_{L_2}}{\del \xi_2}\frac{\del}{\del x^2} = r^{\frac{1}{2}}\cos\frac{\theta}{2}\frac{\del}{\del x^1} - r^{\frac{1}{2}}\sin\frac{\theta}{2}\frac{\del}{\del x^2}.
\end{equation*}
Note that $r$ and $\theta$ are the functions with$(x^1,x^2)$ as variables under the Legendre transformation. Figure \ref{fig line poly} shows the gradient vector field of $f_{L_2}$ in the moment polytope $P$.
\begin{figure}[h]
			\centering
			\includegraphics[width=75mm]{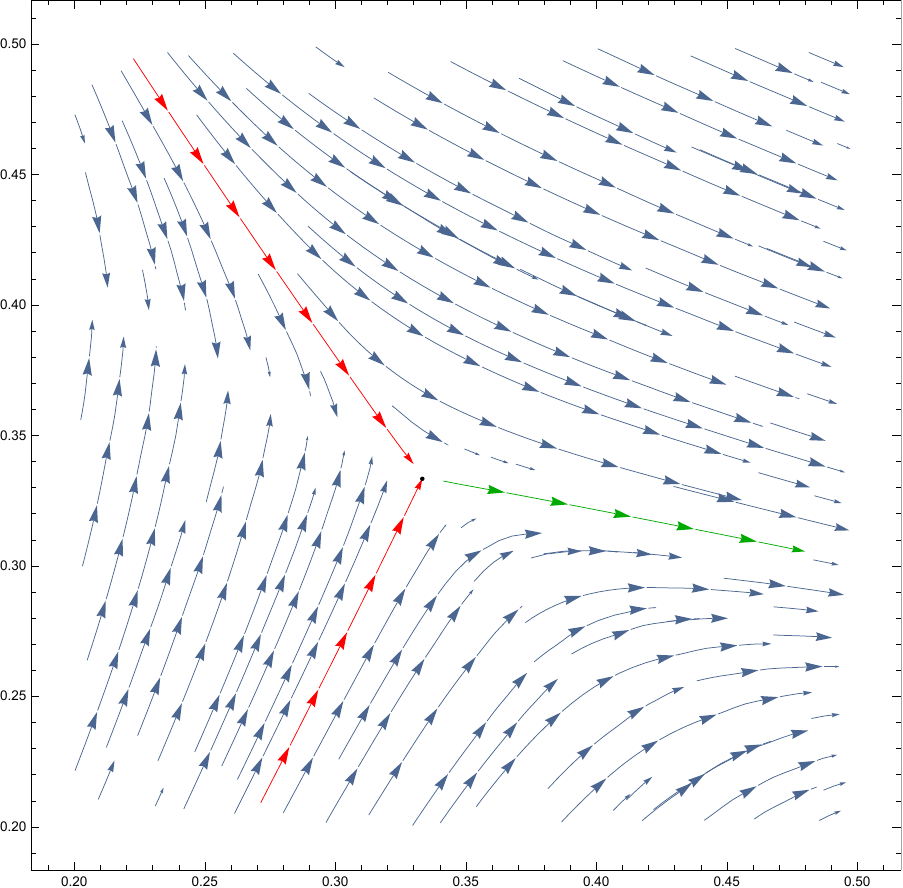}
			\includegraphics[width=75mm]{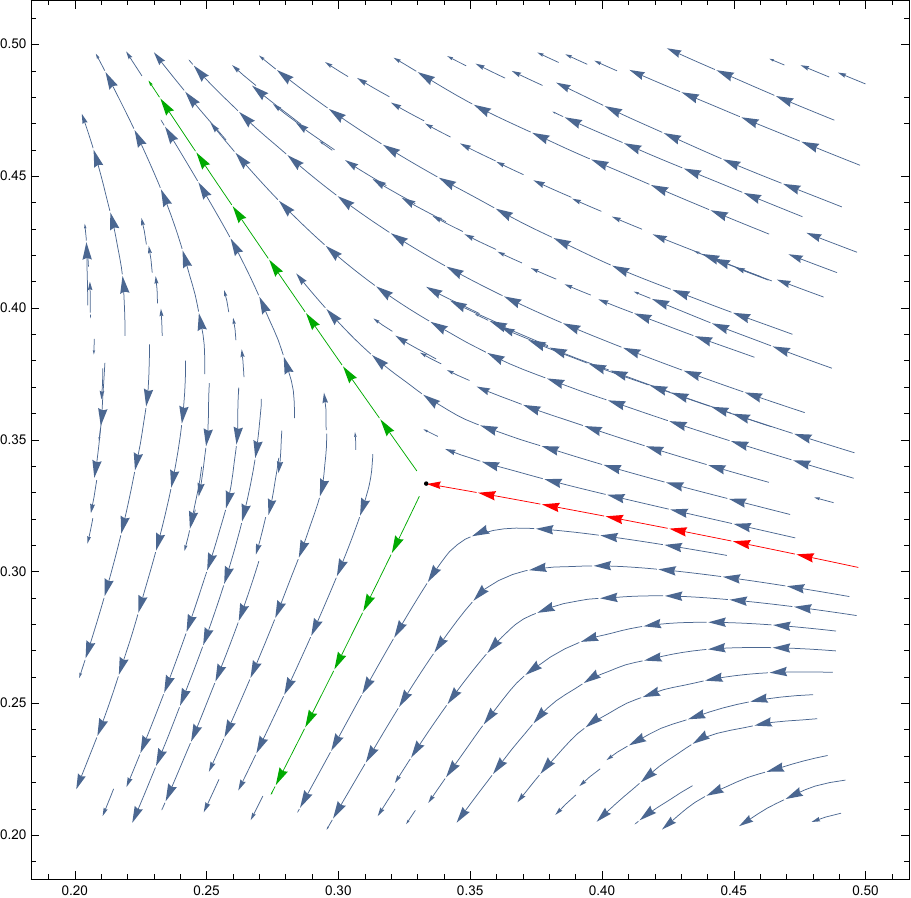}
			\caption{The gradient lines of $f_{L_2}$ in the moment polytope $P$. The one on the left is for $-\pi<\theta<\pi$, and the one on the right is for $\pi<\theta<3\pi$.}
			\label{fig line poly}
\end{figure}

\section{Non-trivial jagged gradient tree in the local model}\label{Non-trivial jagged gradient tree in the local model}
In this appendix, we give an example of a non-trivial jagged gradient tree in the local model of Lagrangian multi-sections. We use the same notation as the previous appendix. Namely, let $B:=N_\R=\R^2\cong\C$ and $Y:=T^*\!N_\R\cong\C^2$. We denote by $(\xi_1,\xi_2)$ the real coordinates of $B$ and $(\xi_1,\xi_2,y^1,y^2)$ the real coordinates of $Y$. The standard symplectic form of $Y$ is given by $\omega=d\xi_1\wedge dy^1+d\xi_2\wedge dy^2$. Let $\xi_1=r\cos\theta$, $\xi_2=r\sin\theta$ be the polar coordinates of $B$. \par
We set the functions $f_1, f_2, f_3$ on $B$ as follows:
\begin{equation*}
\begin{split}
f_1 &:= - 3(1+\sqrt{3})(\xi_1^2 + \xi_2^2) + 6\xi_1 + 6\sqrt{3}\xi_2,\\
f_2 &:= 4\sqrt{2}r^{\frac{3}{2}}\cos\frac{3}{2}\theta,\\
f_3 &:=  3(1+\sqrt{3})(\xi_1^2 + \xi_2^2) - k\xi_1 + \ell\xi_2,
\end{split}
\end{equation*}
where $k$ and $\ell$ are positive real numbers satisfying the following equations:
\begin{equation*}
\left\{ 
\begin{alignedat}{2}  
6\ell &= (1+\sqrt{3})k^2 - 6k, \\
24(k+\sqrt{3}\ell) &= (1+\sqrt{3})(\ell-\sqrt{3}k)^2.
\end{alignedat}
\right.
\end{equation*}
The graph of $1$-forms $\bL_1=\mathrm{graph}(df_1)$ and $\bL_3=\mathrm{graph}(df_3)$ are Lagrangian sections and $\bL_2=\mathrm{graph}(df_2)$ is the Lagrangian multi-section. We denote by $\bL_2=\bL_2^{(1)}\cup\bL_2^{(2)}=\mathrm{graph}(df_2^{(1)})\cup\mathrm{graph}(df_2^{(2)})$ the sheets\footnote{For $-\pi<\theta<\pi$, we have $f_2^{(1)}(r,\theta)=4\sqrt{2}r^{\frac{3}{2}}\cos\frac{3}{2}\theta$ and $f_2^{(2)}(r,\theta)=4\sqrt{2}r^{\frac{3}{2}}\cos\frac{3}{2}(\theta+2\pi)$.} of $\bL_2$. In particular, $\bL_1$ and $\bL_3$ are affine Lagrangians and $\bL_2$ is the local model of Lagrangian multi-section. Then, the critical points are given by
\begin{align*}
\mathrm{Crit}(f_2-f_1) &:= \left\{u_1:=(0,1),\ u_2:=\left(\frac{\sqrt{3}}{2},\frac{1}{2}\right)\right\},\\
\mathrm{Crit}(f_3-f_2) &:= \left\{v_1:=\left(\frac{\sqrt{3}(k+\sqrt{3}\ell)}{24(1+\sqrt{3})},-\frac{k+\sqrt{3}\ell}{24(1+\sqrt{3})}\right),\ v_2:=\left(0,-\frac{k^2}{36}\right)\right\},\\
\mathrm{Crit}(f_3-f_1) &:= \left\{w:=\left(\frac{6+k}{12(1+\sqrt{3})},-\frac{6\sqrt{3}-\ell}{12(1+\sqrt{3})}\right)\right\}.
\end{align*}
In the polar coordinates $(r,\theta)$, we have 
\begin{align*}
u_1 &= \left(1,\frac{\pi}{2}\right), & u_2 &= \left(1,\frac{\pi}{6}+2\pi\right),\\
v_1 &= \left(\frac{k+\sqrt{3}\ell}{12(1+\sqrt{3})},\frac{-\pi}{3}\right), & v_2 &= \left(\frac{k^2}{36},\frac{3\pi}{2}\right).
\end{align*}
Namely, each $u_i$ is the projection of the intersection $p_i:=\bL_1\cap\bL_2^{(i)}$, each $v_i$ is the projection of $q_i:=\bL_2^{(i)}\cap\bL_3$, and $w$ is the projection of $r_w:=\bL_1\cap\bL_3$ to $B$. Furthermore, these critical points are non-degenerate and the Morse indices are zero. We consider the gradient trees starting at $u_i,v_j$ and ending at $w$. Firstly, there are two ordinary gradient trees. These correspond to a pseudo-holomorphic disc bounded by $\bL_1$, $\bL_2^{(1)}$, and $\bL_3$, and to a pseudo-holomorphic disc bounded by $\bL_1$, $\bL_2^{(2)}$, and $\bL_3$. Figure \ref{fig: ordinary gradient trees} shows these ordinary gradient trees and Figure \ref{fig: ordinary triangle} shows the pseudo-holomorphic disc corresponding to the ordinary gradient tree starting at $u_1, v_1$ and ending at $w$.
\begin{figure}[h]
			\centering
			\includegraphics[width=75mm]{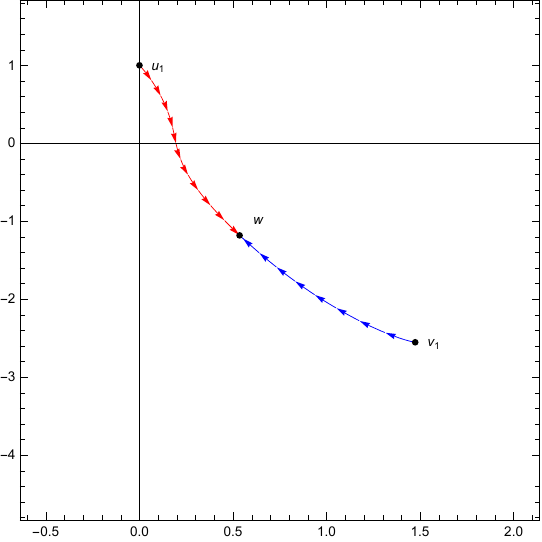}
			\includegraphics[width=75mm]{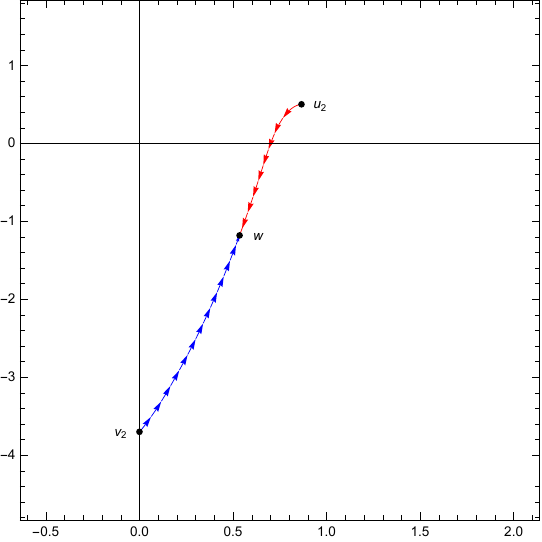}
			\caption{The ordinary gradient trees.}
			\label{fig: ordinary gradient trees}
\end{figure}
\begin{figure}[h]
			\centering
\begin{tikzpicture}[scale=1.3]
\draw[->,>=stealth](-1,0)--  (3,0)node[right]{$\xi_1$-axis};
\draw[->,>=stealth](0,-2)--  (0,2)node[above]{$y^1$-axis};
\fill[red,opacity=0.2] (19/22,-2/11) -- plot[domain={(121 - sqrt(5425))/128}:{19/22}, samples=100] (\x,{sqrt(\x)}) --   cycle;
\fill[cyan,opacity=0.2] (19/22,-2/11) -- (19/22,{sqrt(19/22)}) -- plot[domain={19/22}:{(41 + sqrt(657))/32}, samples=100] (\x,{sqrt(\x)}) --   cycle;
\draw[samples=1000,domain=0:3,red] plot(\x,{sqrt(\x)});
\draw[samples=1000,domain=0:3,blue] plot(\x,{-sqrt(\x)});
\draw[samples=1000,domain=-1:0,red] plot(\x,{sqrt(-\x)});
\draw[samples=1000,domain=-1:0,blue] plot(\x,{-sqrt(-\x)});
\draw[green] (-0.5,2) -- (2,-2) node[below]{$\mathbb{L}_1$};
\draw[violet] (-0.5,-2) -- (2.5,2)node[above]{$\mathbb{L}_3$};
\draw[red] (3,{sqrt(3)}) node[right]{$\mathbb{L}_2^{(1)}$};
\draw[blue] (3,{-sqrt(3)}) node[right]{$\mathbb{L}_2^{(2)}$};
\fill[black](19/22,-2/11)circle(0.04) node[right]{$r_w$};
\fill[black]({(121 - sqrt(5425))/128},{sqrt((121 - sqrt(5425))/128)})circle(0.04);
\draw ({(121 - sqrt(5425))/128 + 0.05},{sqrt((121 - sqrt(5425))/128 +0.2)}) node[above]{$p_1$};
\fill[black]({(41 + sqrt(657))/32},{sqrt((41 + sqrt(657))/32)})circle(0.04) node[below right]{$q_1$};
\fill[black]({(121 + sqrt(5425))/128},{-sqrt((121 + sqrt(5425))/128)})circle(0.04) node[above right]{$p_2$};
\fill[black]({(41 - sqrt(657))/32},{-sqrt((41 - sqrt(657))/32)})circle(0.04);
\draw ({(41 - sqrt(657))/32},{-sqrt((41 - sqrt(657))/32)-0.07}) node[below]{$q_2$};
\end{tikzpicture}\ \ \ \ \ \ 
\begin{tikzpicture}[scale=1.3]
\draw[->,>=stealth](-2,0)--  (2,0)node[right]{$\xi_2$-axis};
\draw[->,>=stealth](0,-2)--  (0,2)node[above]{$y^2$-axis};
\fill[red,opacity=0.2] (-9/74,92/74) -- ({1},{-1})  -- ({-9/74},{9/74})  --cycle;
\fill[cyan,opacity=0.2] (-9/74,92/74) -- (-9/74,9/74)  -- ({-26/57},{26/57})   --cycle;
\draw[red] (-2,2) -- (2,-2) node[right]{$\mathbb{L}_2^{(1)}$};
\draw[blue] (-2,-2) -- (2,2) node[right]{$\mathbb{L}_2^{(2)}$};
\draw[green] (-0.5,2) -- (1.5,-2) node[below]{$\mathbb{L}_1$};
\draw[violet] (-1.5,-2) node[right]{$\mathbb{L}_3$} -- (0.2,2);
\fill[black](-9/74,92/74)circle(0.04) node[left]{$r_w$};
\fill[black]({1},{-1})circle(0.04) node[left]{$p_1$};
\fill[black]({-26/57},{26/57})circle(0.04) node[left]{$q_1$};
\fill[black]({1/3},{1/3})circle(0.04)node[right]{$p_2$};
\fill[black]({-26/23},{-26/23}) circle(0.04) node[right]{$q_2$};
\end{tikzpicture}
\caption{The pseudo-holomorphic disc corresponding to the ordinary gradient tree.}
			\label{fig: ordinary triangle}
\end{figure}

Next, we consider the jagged gradient trees. Note that the positive part of the $\xi_1$-axis is the $(12)$-wall of the spectral network subordinate to $\bL_2$, as shown in Figure \ref{fig line}.
Figure \ref{fig: example of jagged ordinary gradient trees} shows the jagged gradient tree $\gamma$ and Figure \ref{fig: jagged triangle} shows the pseudo-holomorphic disc $\varphi$ corresponding to the jagged gradient tree starting at $u_2, v_1$ and ending at $w$.
\begin{figure}[h]
			\centering
			\includegraphics[width=75mm]{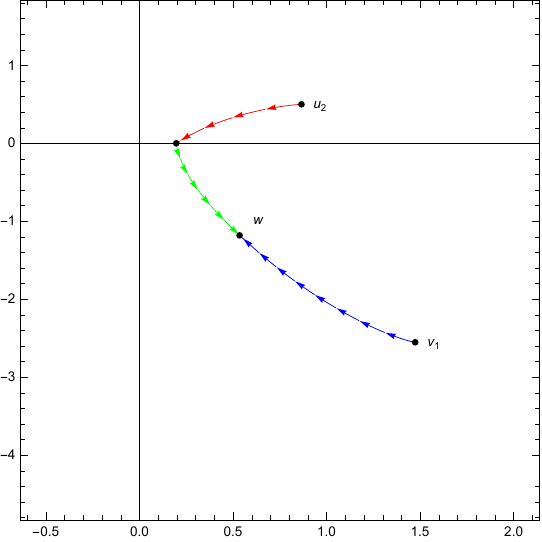}
			\caption{The jagged gradient trees. Starting from $u_2$, it proceeds along $\grad(f_2^{(2)}-f_1)$, interacts with the $(12)$-wall, proceeds along $\grad(f_2^{(1)}-f_1)$, and ends at $w$.}
			\label{fig: example of jagged ordinary gradient trees}
\end{figure}
\begin{figure}[h]
			\centering
\begin{tikzpicture}[scale=1.3]
\draw[->,>=stealth](-1,0)--  (3,0)node[right]{$\xi_1$-axis};
\draw[->,>=stealth](0,-2)--  (0,2)node[above]{$y^1$-axis};
\fill[orange,opacity=0.2] (0,0) -- plot[domain=0:{(121 - sqrt(5425))/128}, samples=100] (\x,{sqrt(\x)}) --  ({(121 - sqrt(5425))/128},{-sqrt((121 - sqrt(5425))/128)}) -- plot[domain={(121 - sqrt(5425))/128}:0, samples=100] (\x,{-sqrt(\x)}) -- cycle;
\fill[red,opacity=0.2] ({(121 + sqrt(5425))/128},{-sqrt((121 + sqrt(5425))/128)}) -- plot[domain={(121 + sqrt(5425))/128}:{(121 - sqrt(5425))/128}, samples=100] (\x,{-sqrt(\x)}) -- ({(121 - sqrt(5425))/128},{sqrt((121 - sqrt(5425))/128)}) --   cycle;
\fill[cyan,opacity=0.2] (19/22,-2/11) -- (19/22,{sqrt(19/22)}) -- plot[domain={19/22}:{(41 + sqrt(657))/32}, samples=100] (\x,{sqrt(\x)}) --   cycle;
\fill[green,opacity=0.2] ({(121 - sqrt(5425))/128},{sqrt((121 - sqrt(5425))/128)}) -- plot[domain={(121 - sqrt(5425))/128}:19/22, samples=100] (\x,{sqrt(\x)}) -- (19/22,-2/11) --cycle;
\draw[samples=1000,domain=0:3,red] plot(\x,{sqrt(\x)});
\draw[samples=1000,domain=0:3,blue] plot(\x,{-sqrt(\x)});
\draw[samples=1000,domain=-1:0,red] plot(\x,{sqrt(-\x)});
\draw[samples=1000,domain=-1:0,blue] plot(\x,{-sqrt(-\x)});
\draw[green] (-0.5,2) -- (2,-2) node[below]{$\mathbb{L}_1$};
\draw[violet] (-0.5,-2) -- (2.5,2)node[above]{$\mathbb{L}_3$};
\draw[red] (3,{sqrt(3)}) node[right]{$\mathbb{L}_2^{(1)}$};
\draw[blue] (3,{-sqrt(3)}) node[right]{$\mathbb{L}_2^{(2)}$};
\fill[black](19/22,-2/11)circle(0.04) node[right]{$r_w$};
\fill[black]({(41 + sqrt(657))/32},{sqrt((41 + sqrt(657))/32)})circle(0.04) node[below right]{$q_1$};
\fill[black]({(121 + sqrt(5425))/128},{-sqrt((121 + sqrt(5425))/128)})circle(0.04) node[above right]{$p_2$};
\end{tikzpicture}\ \ \ \ \ \ 
\begin{tikzpicture}[scale=1.3]
\draw[->,>=stealth](-2,0)--  (2,0)node[right]{$\xi_2$-axis};
\draw[->,>=stealth](0,-2)--  (0,2)node[above]{$y^2$-axis};
\fill[red,opacity=0.2] (0,1) -- ({1/3},{1/3})  -- (0,0)  --cycle;
\fill[green,opacity=0.2] (-9/74,92/74) -- (0,1)  -- (0,0) -- (-9/74,9/74)  --cycle;
\fill[cyan,opacity=0.2] (-9/74,92/74) -- (-9/74,9/74) -- ({-26/57},{26/57})  --cycle;
\draw[red] (-2,2) -- (2,-2) node[right]{$\mathbb{L}_2^{(1)}$};
\draw[blue] (-2,-2) -- (2,2) node[right]{$\mathbb{L}_2^{(2)}$};
\draw[green] (-0.5,2) -- (1.5,-2) node[below]{$\mathbb{L}_1$};
\draw[violet] (-1.5,-2) node[right]{$\mathbb{L}_3$} -- (0.2,2);
\fill[black](-9/74,92/74)circle(0.04) node[left]{$r_w$};
\fill[black]({-26/57},{26/57})circle(0.04) node[left]{$q_1$};
\fill[black]({1/3},{1/3})circle(0.04)node[right]{$p_2$};
\end{tikzpicture}
\caption{The pseudo-holomorphic disc corresponding to the jagged gradient tree. The orange area is the contribution from the $(12)$-wall of the spectral network subordinate to $\bL_2$.}
			\label{fig: jagged triangle}
\end{figure}
We denote by $x$ the interaction point between the gradient line and $(12)$-wall. Then, the symplectic area $A(\gamma)$ of the pseudo-holomorphic disc $\varphi$ corresponding to the jagged gradient tree $\gamma$ is given by
\begin{align}
A(\gamma) &= \int_{D^2} \varphi^*\omega \notag\\
&= \int_{u_2}^{x} d(f_2^{(2)}-f_1) + \int_{0}^{x}d(f_2^{(1)}-f_2^{(2)}) + \int_{x}^{w} d(f_2^{(1)}-f_1) + \int_{v_1}^{w} d(f_3-f_2^{(1)}) \label{int of area}\\
&= \left(f_3(w)-f_1(w)\right) - \left(f_2^{(2)}(u_2)-f_1(u_2)\right) - \left(f_3(v_1)-f_2^{(1)}(v_1)\right). 
\end{align}
Note that the second term in Eq. \eqref{int of area} is the contribution from the $(12)$-wall (see Figure \ref{fig: jagged triangle}). 
In particular, the area $A(\gamma) \approx 121.1$ is positive since $k \approx 11.5$ and $\ell \approx 49.1$.\par
In the proof of the main theorem of this paper, jagged gradient trees did not appear. However, in general, it is necessary to consider pseudo-holomorphic discs containing the ramification locus as in this example and section \ref{Example of nontrivial jagged gradient tree in CP2}. Therefore, jagged gradient trees are required in the definition of the category $\Mo^{\mult}(P)$ of multi-valued Morse homotopy.


\end{document}